\newtheorem{theorem}{Theorem}[section]
\newtheorem{prop}{Proposition}[section]
\newtheorem{leme}{Lemma}[section]
\newtheorem{dfnt}{Definition}[section]
\newtheorem{remark}{Remark}
\newenvironment{proof}[1][Proof]{\textbf{#1.} }{\ \rule{0.5em}{0.5em}}
\def \dis {\displaystyle}
\def \R {\mathbb{R}}
\def \O {\mathcal{O}}
\def\dt{dx\, dt}
\def\dT{dx\, dt}
\def \hvarphi \widehat{\varphi}
\def\dq{dx\,dt}
\def \hvarphi \widehat{\varphi}
\begin{document}
  \title{Stackelberg-Nash null controllability for a non linear coupled degenerate parabolic equations}
 \author{{{Landry Djomegne} \thanks{{\it
 				University of Dschang, BP 67 Dschang, Cameroon, West region,
 				email~: {\sf landry.djomegne\char64yahoo.fr} }}}
 	{{ \quad Cyrille Kenne} \thanks{{\it Laboratoire LAMIA, Universit\'e des Antilles, Campus Fouillole, 97159 Pointe-{\`a}-Pitre Guadeloupe (FWI),
 				email~: {\sf :kenne853\char64gmail.com} }}}
 	{{ \quad Ren\'e Dorville} \thanks{{\it
 				Laboratoire L3MA, UFR STE et IUT, Universit\'e des Antilles, Schoelcher, Martinique,
 				email~: {\sf rene.dorville\char64orange.fr} }}}
 			{{ \quad Pascal Zongo} \thanks{{\it
 						Laboratoire L3MA, UFR STE et IUT, Universit\'e des Antilles, Schoelcher, Martinique,
 						email~: {\sf pascal.zongo\char64gmail.com} }}}
 }

  \date{\today}
  
  \maketitle	
  	

\begin{abstract}
The main purpose of this paper is to apply the notion of hierarchical control to a coupled degenerate non linear parabolic equations. We use the Stackelberg-Nash strategy with one leader and two followers. The followers solve a Nash equilibrium corresponding to a bi-objective optimal control problem and the leader a null controllability problem. 
Since the considered problem is non linear, the associated cost is non-convex. We first prove the existence, uniqueness and the characterization of the Nash quasi-equilibrium, which is a weak formulation of the Nash equilibrium because the cost associated to the non linear problem is non-convex. Next, we show that under suitable conditions, the Nash quasi-equilibrium is equivalent to the Nash equilibrium. Finally using some Carleman inequalities that we established, and the Kakutani's fixed point Theorem, we brough the states of our system to the rest at final time $T$.

\end{abstract}

\textbf Mathematics Subject Classification. {35K05; 35K55; 49J20; 93B05, 93C20.}\par
\noindent
{\textbf {Key-words}}~:~Degenerate parabolic system; Carleman inequalities; Null controllability; Stackelberg-Nash strategies.

\section{Introduction}\paragraph{}

Let $\Omega=(0,1)$ be an open and bounded domain of $\R$. Let $\omega$, $\omega_1$ and $\omega_2$ be three non empty open subsets of $\Omega$ such that $\omega_i\cap\omega=\emptyset$, for $i=1,2$.  We fix $T > 0$ and set $Q =(0, T )\times \Omega$,
$\omega_T =(0, T )\times\omega $, $\omega_{1,T} =(0, T )\times\omega_1 $ and $\omega_{2,T} =(0, T )\times\omega_2 $. Then, we consider the following non linear coupled degenerate system

  \begin{equation}\label{eq}
  \left\{
  \begin{array}{rllll}
  \dis y_{1,t}-\left(a(x)y_{1,x}\right)_{x}+F_1(y_1)  &=&h\chi_{\omega}+v^1\chi_{\omega_1}+v^2\chi_{\omega_2}& \mbox{in}& Q,\\
  \dis y_{2,t}-\left(a(x)y_{2,x}\right)_{x}+F_2(y_2)+d\,y_1  &=&0& \mbox{in}& Q,\\
  \dis y_1(t,0)=y_1(t,1)=y_2(t,0)=y_2(t,1)&=&0& \mbox{on}& (0,T), \\
  \dis y_1(0,\cdot)=y_1^0,\ \ y_2(0,\cdot)=y_2^0&& &\mbox{in}&\Omega.
  \end{array}
  \right.
  \end{equation}
   
  In the system \eqref{eq}, $y=y(t,x)=(y_1,y_2)^{t}$ is the state, $v^i=v^i(t,x),\ i=1,2$ and $h=h(t,x)$ are different control functions  whose act on the system through the subsets $\omega_{i}$ and $\omega$ respectively. These functions $v^i$ and $h$ are the followers and leader controls respectively. Here $\chi_{\omega}$ and $\chi_{\omega_i}$ are respectively the characteristic function of the control set $\omega$ and $\omega_i$, $y^0=(y_1^0, y_2^0)^{t}\in [L^2(\Omega)]^2$ is the initial data and the function $d\in L^\infty(Q)$. 
  
  We assume that the real functions $a:=a(\cdot)$ and $F_i:\mathbb{R}\rightarrow \mathbb{R},\ i=1,2$ satisfy the following assumptions: 
  \begin{equation}\label{k}
  	\left\{\begin{array}{llll}
  		\dis a\in \mathcal{C}([0,1])\cap\mathcal{C}^1((0,1]),\ \ a>0\ \mbox{in}\ (0,1] \ \mbox{and}\ a(0)=0,\\
  		\dis \exists \tau\in [0,1)\ :\ xa^\prime(x)\leq \tau a(x),\ x\in [0,1]
  	\end{array}
  	\right.
  \end{equation}
  and
  \begin{equation}\label{lip}
  	\left\{
  	\begin{array}{rllll}
  		&&	\dis F_i(0)=0,\\
  		&&	\dis F_i\in \mathcal{C}^2(\mathbb{R}),\\
  		&& \dis \exists M>0:\ |F_i^\prime(r)|+|F_i^{\prime\prime}(r)|\leq M,\ \forall r\in \R,\ i=1,2.
  	\end{array}
  	\right.
  \end{equation}

Note that the above hypothesis on $a(\cdot)$ are true in the case where $a(x)=x^\alpha$ with $0\leq\alpha<1$. Then, in this case, the system \eqref{eq} will be called a weakly coupled degenerate system. We can also obtain the same results of this work in the case where $1\leq\alpha<2$ and this time we will rather take the Neumann condition $\left(a(x)y_x\right)(0)=0$ and the system \eqref{eq} will be called a strongly coupled degenerate system (\textit{cf}. \cite{alabau2006}).  We denote by $y_{1,t}$ and $y_{1,x}$ the partial derivative of $y_1$ with respect $t$ and $x$ respectively.\par
   
In the context of population dynamics, the system \eqref{eq} can models the dispersion of a gene in two given populations (cancer cells and healthy cells for instance) which are in interaction. In this case, $x$ represents the gene type, $y_1(t,x)$ and $y_2(t,x)$ denote the distributions of individuals at time $t$ and of gene type $x$ of both populations. In this paper, the function $a(x)$ is the diffusion coefficient which depends on the gene type and degenerate at the left hand side of its domain, i.e. $a(0)=0$, (e.g $a(x)=x^\alpha,\ \alpha>0$). In this case, we say that the system \eqref{eq} is a coupled degenerate parabolic equation. Genetically speaking, such a property of degeneracy is natural since it means that if each population is not of gene type, it cannot be transmitted to its offspring.\par

 In this paper we are interested in the hierarchic Stackelberg-Nash strategy for system \eqref{eq}. More precisely, for $i=1,2$, we introduce the non-empty open sets $\omega_{i,d}\subset \Omega$, representing the observation domains of the followers, and the fixed target functions $y_d^i=(y_{1,d}^i, y_{2,d}^i)^{t}\in L^2((0,T); \omega_{1,d})\times L^2((0,T); \omega_{2,d})$. Let us define 
  the following cost functional
  \begin{equation}\label{all16}
  J_i(h;v^1,v^2)=\frac{\alpha_i}{2}\int_0^T\int_{\omega_{i,d}}\left(|y_1-y_{1,d}^i|^2+|y_2-y_{2,d}^i|^2\right)\ dxdt+\frac{\mu_i}{2}\int_0^T\int_{\omega_i}\rho_{*}^2|v^i|^2\ dxdt,
  \end{equation}
  where  $\alpha_i$ and $\mu_i$ are two positive constants and $\rho_{*}=\rho_{*}(t)\in C^\infty([0,T])$ is a suitable positive weight function blowing up at $t=0$ and $t=T$.
  
  \begin{remark}\label{}$ $
  	
  	The weight function $\rho_{*}(t)$ defined in \eqref{all16} will help us to establish a suitable observability Carleman inequality in the Section \ref{Carleman}.
  \end{remark}

   We want to choose the controls $v^i$ and $h$ in order to achieve two different objectives:
   \begin{itemize}
   	\item The main goal is to choose $h$ such that the following null controllability objective holds:
   	\begin{equation}\label{obj1}
   	y_1(T,\cdot;h;v^1,v^2)=y_2(T,\cdot;h;v^1,v^2)=0\ \mbox{in}\ \Omega.
   	\end{equation}
   	\item The second goal is the following: given the functions $y_d^i$ and $h$, we want to choose the control $v^i$  minimizing $J_i$ given by \eqref{all16}. This means that, throughout the interval $(0,T)$,  the control $v^i$ will be chosen such that:
   	\begin{equation}\label{obj2}
   	\begin{array}{rll}
   	&&\mbox{the solution}\ y(t,x;h;v^1,v^2)\ \mbox{of}\ \eqref{eq}\ \mbox{remains "not too far" from a desired target}\  y_d^i(t,x) \\ 
   	&&\mbox{in the observability domain}\  \omega_{i,d},\ i=1,2.
   	\end{array}
   	\end{equation}
   \end{itemize} 
Our goal is to prove that, for any initial data $y^0\in [L^2(\Omega)]^2$, there exist a control $h\in L^2(\omega_T)$ (called leader) and an associated Nash equilibrium $(\hat{v}^1,\hat{v}^2)^{t}=(\hat{v}^1(h),\hat{v}^2(h))^{t}\in \mathcal{H}=L^2((0,T);L^2(\omega_1))\times L^2((0,T);L^2(\omega_2))$ (called followers) such that the associated state $y$ of system \eqref{eq} satisfies \eqref{obj1}. To do this, we shall follow the Stackelberg-Nash strategy which is described as follows:

\begin{enumerate}
\item For each choice of the leader $h$, we look for a Nash equilibrium pair for the costs $J_i,\ i=1,2$ given by \eqref{all16}. That is, find the controls $(\hat{v}^1,\hat{v}^2)^{t}=(\hat{v}^1(h),\hat{v}^2(h))^{t}\in \mathcal{H}$ satisfying 
 \begin{equation}\label{Nash}
	\left\{
	\begin{array}{rllll}
	\dis J_1(h;\hat{v}^1,\hat{v}^2)\leq J_1(h;v^1,\hat{v}^2),\ \ \ \forall v^1\in L^2((0,T);L^2(\omega_1)),\\
		\dis J_2(h;\hat{v}^1,\hat{v}^2)\leq J_2(h;\hat{v}^1,v^2),\ \ \ \forall v^2\in L^2((0,T);L^2(\omega_2)),
	\end{array}
	\right.
\end{equation}
or equivalently
 \begin{equation}\label{Nash1}
	\left\{
	\begin{array}{rllll}
		\dis	J_1(h;\hat{v}^1,\hat{v}^2)=\min_{v^1\in L^2((0,T);L^2(\omega_1))}J_1(h;v^1,\hat{v}^2),\\
		\dis	J_2(h;\hat{v}^1,\hat{v}^2)=\min_{v^2\in L^2((0,T);L^2(\omega_2))}J_2(h;\hat{v}^1,v^2).
	\end{array}
	\right.
\end{equation}

\item Once the Nash equilibrium has been identified and fixed for each $h$, we look for a control $\bar{h}$ such that
\begin{equation}\label{mainobj}
	y_1(T,\cdot;\bar{h};\hat{v}^1(\bar{h}),\hat{v}^2(\bar{h}))=y_2(T,\cdot;\bar{h};\hat{v}^1(\bar{h}),\hat{v}^2(\bar{h}))=0\ \mbox{in}\ \Omega.
\end{equation}
		
\end{enumerate}

\begin{remark}
	$ $
	\begin{enumerate}[(a)]
	\item In the linear case, the functionals $J_i,\ i=1,2$ are differentiable and convex and in this case,  the pair $(\hat{v}^1,\hat{v}^2)$ is a Nash equilibrium for $(J_1,J_2)$ if and only if
	\begin{equation}\label{Nashprime}
	\left\{
	\begin{array}{rllll}
	\dis	\frac{\partial J_1}{\partial v^1}(h;\hat{v}^1,\hat{v}^2)(v^1,0)=0,\ \ \forall v^1\in L^2((0,T);L^2(\omega_1)),\  \ \hat{v}^i\in L^2((0,T);L^2(\omega_i)),\\
	\dis	\frac{\partial J_2}{\partial v^2}(h;\hat{v}^1,\hat{v}^2)(0,v^2)=0,\ \ \forall v^2\in L^2((0,T);L^2(\omega_2)),\  \ \hat{v}^i\in L^2((0,T);L^2(\omega_i)).
	\end{array}
	\right.
	\end{equation}
	\item 	In the semi-linear framework, the corresponding functionals $J_1$ and $J_2$ are not convex in general. For this reason, we must consider the weaker definition of Nash equilibrium given below.	
	
	\end{enumerate}
	
\end{remark}

 \begin{dfnt}\label{dfntquasi}$ $
 	
 	Let the leader control $h$ be given. The pair ($\hat{v}^1,\hat{v}^2)$ is called a Nash quasi-equilibrium of functionals $(J_1,J_2)$ if the condition \eqref{Nashprime} is satisfied.		
 \end{dfnt}

In this paper, we are interested by the concept of Stackelberg competition introduced by \cite{Von1934Stackelberg}. It is a strategy  game between several firms in which one of the firms (called the leader) moves first and the others firms (named followers) moves according to the leader's strategy. In case of many followers with each corresponding to a specific optimality objective, the Nash equilibrium is the most suitable.

In the framework of partial differential equations (PDEs), the hierarchic control was introduced by J-L. Lions in \cite{Lions1994Stackelberg1, Lions1994Stackelberg2} to study a bi-objective control problem for the wave and heat equations respectively. In the last years, other authors have used hierarchical control in the sense of Lions, see for instance \cite{mercan1, mercan2, djomegnebackward, teresa2018, liliana2020, romario2018, djomegne2018, djomegnelinear}. There are in the literature some important results about
Stackelberg-Nash strategy for PDEs. In  \cite{araruna2015anash}, F. D Araruna et {\it al.} developed the first hierarchical results within the exact controllability framework for class of parabolic equations (linear and semi-linear), with pointwise constraints on the followers. In \cite{santos2019}, N. Carre\~{n}o and M. C. Santos applied the Stackelberg-Nash strategy to the Kuramoto-Sivashinsky equation with a distributed leader, and two followers. Their results were achieved by proving a partial null
controllability result for a system of non linear fourth-order equations with boundary coupling terms. In \cite{djomegne2021}, the author studied the Stackelberg-Nash strategy for a non linear parabolic equation in an unbounded domain. His results were achieved using a Schauder's fixed point Theorem under the assumption that the uncontrolled domain is bounded. In \cite{dany2021}, Dany Nina Huaman applied Stackelberg-Nash strategy  to control a quasi-linear parabolic equations in dimensions $1D$, $2D$ or $3D$. In \cite{omar2021}, the authors applied hierarchical control to the anisotropic heat equation with dynamic boundary conditions and drift terms. Even though the hierarchical control of several types of problems has been intensively considered by researchers (see e.g \cite{araruna2019boundary, ararunawave2018} ) there only a few addressing the important case of coupled systems. 

In the context of hierarchical strategy for coupled systems, K\'er\'e et {\it al.} \cite{kere2017coupled}, considered a bi-objective control strategy for a coupled parabolic equations with a finite constraints  on one of the states. Their results were
achieved by means of an observability inequality of Carleman adapted to
the constraints. In \cite{teresacoupled} and \cite{teresa}, V. Hern\'{a}ndez-Santamar\'ia et {\it al.} studied a Stackelberg-Nash strategy for a cascade system of parabolic equations and for a cascade system of parabolic equations with the leader as a vector function acting in the two equations. More recently in \cite{danynina2021}, J. Limaco et {\it al.} applied the Stackelberg-Nash strategy to a coupled  quasi-linear parabolic system with controls acting in the interior on the domain.      

In all the above cited works, the hierarchic strategy were applied to non degenerate systems. To the best of our knowledge, there do not exists any work in the literature addressing the Stackelberg-Nash strategy for a coupled degenerate parabolic system. As far as we know, the only work dealing with hierarchical strategy applied to a single degenerate equation is the one of F. D. Araruna et {\it al.} \cite{araruna2018stackelberg}, where the authors studied the Stackelberg-Nash strategy for a semi-linear degenerate parabolic equations. In the present paper, the main novelty is that, we extend the results concerning the Stackelberg-Nash control to the coupled degenerate parabolic equation \eqref{eq}. 

\subsection{Main results}

We will prove that if $\mu_i,\ i=1,2$ are sufficiently large, then the functionals $J_i,\ i=1,2$ given by \eqref{all16} are indeed convex. More precisely, we have the following results.

 \begin{theorem}\label{nash}$ $
 	
	Assume that \eqref{k}, \eqref{lip}, $y^0\in [L^2(\Omega)]^2$ and $y_d^i\in L^\infty((0,T); \omega_{1,d})\times L^\infty((0,T); \omega_{2,d})$ are satisfied. Assume that $h\in L^2(\omega_T)$ and $\mu_i,\ i=1,2$ are sufficiently large. 
	Then, if $(\hat{v}^1,\hat{v}^2)$ is a Nash quasi-equilibrium for $J_i,\ i=1,2$, there exists a constant $C>0$ independent of $\mu_i,\ i=1,2$ such 
	\begin{equation}
		\dis D_i^2J_i(h;\hat{v}^1,\hat{v}^2)\cdot(w^i,w^i)\geq C \|w^i\|^2_{L^2((0,T);L^2(\omega_i))},\ \forall w^i\in L^2((0,T);L^2(\omega_i)),\ i=1,2.
	\end{equation}
	In particular, the functional $(J_1,J_2)$ are convex in $(\hat{v}^1, \hat{v}^2)$ and therefore the pair $(\hat{v}^1, \hat{v}^2)$ is a Nash equilibrium for $J_i,\ i=1,2$ of the system \eqref{eq}.
	
\end{theorem}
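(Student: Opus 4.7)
The natural strategy is to expand $J_i$ to second order at the quasi-equilibrium and to show that, for $\mu_i$ large, the penalty term $\mu_i\int_0^T\!\int_{\omega_i}\rho_*^2|w^i|^2\,dxdt$ absorbs the sign-indefinite contribution that comes from the nonlinear tracking part of the cost. Fixing $v^j=\hat v^j$ for $j\neq i$ and varying $v^i=\hat v^i+\varepsilon w^i$, I would write $y=\hat y+\varepsilon z^i+\tfrac{\varepsilon^2}{2}Z^i+o(\varepsilon^2)$, where $z^i=(z_1^i,z_2^i)$ solves the linearization of \eqref{eq} around $\hat y$ with source $w^i\chi_{\omega_i}$ in the first equation, while $Z^i=(Z_1^i,Z_2^i)$ solves the second linearized system with sources $-F_k''(\hat y_k)(z_k^i)^2$ and no new control. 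Plugging this expansion into \eqref{all16} and reading off the $\varepsilon^2$ coefficient gives
\begin{equation*}
D_i^2 J_i(h;\hat v^1,\hat v^2)(w^i,w^i)=\alpha_i\!\!\int_0^T\!\!\int_{\omega_{i,d}}\!\Big(|z_1^i|^2+|z_2^i|^2+(\hat y_1-y_{1,d}^i)Z_1^i+(\hat y_2-y_{2,d}^i)Z_2^i\Big)dxdt+\mu_i\!\!\int_0^T\!\!\int_{\omega_i}\!\rho_*^2|w^i|^2\,dxdt,
\end{equation*}
so the only term of uncertain sign is the cross term $\alpha_i\int\!\int(\hat y-y_d^i)\cdot Z^i$.

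Next I would establish three a priori estimates in the weighted energy spaces adapted to the degenerate operator $(a(x)\cdot_x)_x$, using \eqref{k} and the uniform bounds $|F_k'|,|F_k''|\leq M$ from \eqref{lip}. (i) A standard energy estimate applied to the first-linearized cascade gives $\|z^i\|_{L^2(Q)^2}\leq C\|w^i\|_{L^2((0,T);L^2(\omega_i))}$ and, via the 1D Sobolev embedding applied to the weighted energy norm, higher integrability of $z^i$ in $L^4(Q)$. (ii) The same energy method applied to the equation for $Z^i$, whose source $F_k''(\hat y_k)(z_k^i)^2$ is controlled in $L^2(Q)$ by (i), yields $\|Z^i\|_{L^2(Q)^2}\leq CM\|z^i\|^2_{L^4(Q)^2}\leq C\|w^i\|^2_{L^2((0,T);L^2(\omega_i))}$. (iii) A bound $\|\hat y-y_d^i\|_{L^2(Q)^2}\leq K$ with $K$ depending only on $h,\,y^0,\,y_d^i,\,d,\,M$ and \emph{not} on $\mu_i$. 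For (iii) I would exploit the first-order optimality relation characterizing the quasi-equilibrium, namely $\hat v^i=-(\mu_i\rho_*^2)^{-1}p_i^i$ for an adjoint state $p^i$ driven by $\hat y-y_d^i$, and run a bootstrap on $\|\hat y\|$ which, once $\mu_i$ is large enough, reduces to the estimate for \eqref{eq} driven by the leader $h$ alone.

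Combining these ingredients via Cauchy--Schwarz bounds the cross term by $\alpha_i CK\|w^i\|^2_{L^2((0,T);L^2(\omega_i))}$, while $\mu_i\int\!\int\rho_*^2|w^i|^2\geq\mu_i c_*\|w^i\|^2_{L^2((0,T);L^2(\omega_i))}$ with $c_*=\inf_{(0,T)}\rho_*^2>0$. This yields
\begin{equation*}
D_i^2 J_i(h;\hat v^1,\hat v^2)(w^i,w^i)\geq(\mu_ic_*-\alpha_iCK)\|w^i\|^2_{L^2((0,T);L^2(\omega_i))},
\end{equation*}
and for $\mu_i$ sufficiently large the right-hand side is bounded below by $C\|w^i\|^2_{L^2((0,T);L^2(\omega_i))}$ with $C>0$ independent of $\mu_i$, as claimed. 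Strict convexity of $v^i\mapsto J_i(h;v^i,\hat v^j)$ at the critical point $\hat v^i$ then upgrades the quasi-equilibrium into a genuine Nash equilibrium. The main obstacle is step (iii): extracting a $\mu_i$-independent estimate on $\hat y$ when the only structural information available is a coupled first-order optimality system, because the nonlinearities $F_k$ and the coupling $d\,y_1$ could in principle amplify small controls into large state excursions; this is where the bounded-derivatives assumption \eqref{lip} and the large weighted penalty $\mu_i\rho_*^2$ have to be used in tandem.
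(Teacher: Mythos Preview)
Your proposal is correct and takes a genuinely different, though closely related, route from the paper. You compute the Hessian directly via the second-order state variation $Z^i$ (the solution of the second linearized system with source $-F_k''(\hat y_k)(z_k^i)^2$) and bound the sign-indefinite cross term $\alpha_i\int_{\omega_{i,d}}(\hat y-y_d^i)\cdot Z^i$ by Cauchy--Schwarz, using $\|Z^i\|_{L^2(Q)}\leq C\|z^i\|_{L^4(Q)}^2\leq C\|w^i\|^2$; the $L^4$ bound follows from the interpolation of $L^\infty(0,T;L^2(\Omega))$ and $L^2(0,T;L^\infty(\Omega))$, the latter coming from the degenerate embedding $H^1_a(\Omega)\hookrightarrow L^\infty(\Omega)$ in \eqref{embedd}. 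The paper instead introduces the adjoint $p^{1,s}$ of the linearized system and its $s$-derivative $\eta^1$, rewrites the cross term as $\int_{\omega_1}\eta_1^1 w^1$, and after an integration by parts reduces it to $\int_Q F_1''(\hat y_1)|\phi_1|^2 p_1^1$ plus lower-order pieces; the key technical input there is an $L^\infty(Q)$ bound on $p_1^1$ obtained from Lemma~\ref{sauveur}, again via the same embedding \eqref{embedd}. So both arguments hinge on the one-dimensional embedding $H^1_a\hookrightarrow L^\infty$, but you place the extra integrability on the first variation $z^i$ (primal side) while the paper places it on the adjoint $p^i$ (dual side). Your version is more direct and does not require introducing the auxiliary adjoint system for $\eta^i$, nor the $L^\infty$ hypothesis on $y_d^i$; the paper's version avoids the $L^4$ interpolation and keeps all estimates at the $L^2$ level for $\phi$. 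The $\mu_i$-independent bound on $\hat y$ that you flag as step~(iii) is handled in the paper through the estimate \eqref{v} on the quasi-equilibrium, and your bootstrap sketch recovers exactly that.
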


 To state the main contribution of this paper, we assume that the control regions satisfy the following assumptions:
\begin{equation}\label{od}
	\left\{
	\begin{array}{rllll}
		&&\dis \omega_{1,d}=\omega_{2,d}: \mbox{the common observability set will be denoted by}\ \omega_d,\\
		\\
		&&\dis \omega_d\cap\omega\neq\emptyset.
	\end{array}
	\right.
\end{equation}

Our main result of this paper is the following:
\\
\begin{theorem}\label{theolinear}$ $
	
	 Suppose that \eqref{od} holds, the $\mu_i,\ i=1,2$ are large enough, $a(\cdot)$ satisfies \eqref{k} and the function $F_i,\ i=1,2$ satisfy \eqref{lip}. Let $\O_2$ be a non-empty subset of $\omega$ such that the following inequality holds:
	\begin{equation}\label{d}
		d\geq d_0>0\ \ \ \mbox{in}\ \ \ \ (0,T)\times \O_2.
	\end{equation}
Then,
	there exists a positive real weight function $\kappa=\kappa(t)$ (the definition of $\kappa$ will be given later) such that for any $y_d^i=(y_{1,d}^i, y_{2,d}^i)^{t}\in L^2((0,T); \omega_{1,d})\times L^2((0,T); \omega_{2,d})$ satisfying
	\begin{equation}\label{nou}
		\int_{0}^{T}\int_{\omega_d}\kappa^{-2}|y^i_{j,d}|^2\ dxdt<+\infty,\ \ i,j=1,2,
	\end{equation}
	and for any $y^0\in [L^2(\Omega)]^2$, there exist a control $\bar{h}\in L^2(\omega_T)$ and an associated Nash equilibrium $(\hat{v}^1, \hat{v}^2)\in \mathcal{H}=L^2((0,T);L^2(\omega_1))\times L^2((0,T);L^2(\omega_2))$ such that the corresponding solution to \eqref{eq} satisfies \eqref{mainobj}. 
\end{theorem}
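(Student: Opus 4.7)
The plan is to combine a freezing-linearization procedure with Kakutani's fixed-point theorem, resting on a Carleman/observability inequality for a coupled degenerate forward-backward optimality system. First, I would invoke Theorem \ref{nash}: for $\mu_i$ large the Nash quasi-equilibrium is a genuine Nash equilibrium and is characterized by the Euler--Lagrange identities \eqref{Nashprime}. Writing these out produces two adjoint states $p^1,p^2$ solving backward degenerate parabolic equations with source terms supported in $\omega_d=\omega_{1,d}=\omega_{2,d}$, and optimal followers of feedback form $\hat v^i=-\tfrac{1}{\mu_i}\rho_{*}^{-2}p^i\chi_{\omega_i}$. Substituting into \eqref{eq} reformulates \eqref{mainobj} as a null-controllability problem for the $(y_1,y_2,p^1,p^2)$ forward-backward system with the single leader $h\chi_\omega$.

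Second, I would linearize. For $z=(z_1,z_2)\in L^2(Q)^2$ set $g_i^z(t,x)=\int_0^1 F_i'(\sigma z_i(t,x))\,d\sigma$, so that $F_i(y_i)=g_i^z y_i$ whenever $y_i=z_i$; by \eqref{lip}, $\|g_i^z\|_{L^\infty(Q)}\le M$ uniformly in $z$. This yields a linear coupled degenerate forward-backward system with $L^\infty$ potentials, to which the linear controllability theory applies.

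Third, for this linearized system I would establish a global Carleman inequality for its adjoint, adapted to the degeneracy at $x=0$ in the spirit of \cite{alabau2006}. The cascade coupling through $d$ is handled by using \eqref{d} on $\O_2\subset\omega$ to recover local information on $p^2$ from local information on $p^1$. The overlap assumption \eqref{od}, $\omega_d\cap\omega\neq\emptyset$, then allows one to absorb the Nash source terms localized in $\omega_d$ into local terms supported in $\omega$, where the leader acts. The inequality dualizes, via the Fursikov--Imanuvilov penalized minimization, into a null-controllability result with a leader in $L^2(\omega_T)$ and an explicit weight $\kappa(t)$ on the observation terms; condition \eqref{nou} is precisely the integrability of $y^i_{j,d}$ that makes the corresponding dual functional coercive.

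Fourth, define $\Lambda(z)=\{y_z\}$, where $y_z$ is the state associated with the controlled linearized optimality system with potentials $g_i^z$. I would verify the hypotheses of Kakutani's fixed-point theorem on a closed convex bounded subset of $L^2(Q)^2$: uniform $L^2$-bounds from the controllability estimate give invariance, linearity and uniqueness of the linearized optimality system give convex values, and parabolic compactness (weighted energy estimates for the degenerate operator combined with Aubin--Lions) together with the strong-$L^2$ to weak-$\ast$ $L^\infty$ continuity of $z\mapsto g_i^z$ give upper semi-continuity with compact values. A fixed point $y\in\Lambda(y)$ produces the desired triple $(\bar h,\hat v^1(\bar h),\hat v^2(\bar h))$ satisfying \eqref{mainobj}. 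The main obstacle is the Carleman inequality itself: simultaneously accommodating the degeneracy at $x=0$, the cascade coupling through $d$ (invertible only on $\O_2$), the two Nash adjoint source terms on $\omega_d$, and the precise weight $\kappa$ matching \eqref{nou} is the genuine technical core; once this estimate is in place, the linearization and fixed-point arguments are by now standard in the Stackelberg--Nash literature.
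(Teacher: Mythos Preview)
Your proposal is correct and follows essentially the same route as the paper: characterize the Nash equilibrium via the optimality system \eqref{vop}--\eqref{pop}, freeze the nonlinearities to obtain a linear forward--backward system, prove null controllability of that linear system by means of a degenerate Carleman/observability inequality for its adjoint (exploiting \eqref{od} and \eqref{d}), and close with Kakutani's fixed-point theorem. Two minor discrepancies worth cleaning up: the feedback law involves only the first adjoint component, $\hat v^i=-\tfrac{1}{\mu_i}\rho_*^{-2}p_1^i\chi_{\omega_i}$, and the paper freezes the forward and backward potentials differently ($b_i^w=\int_0^1 F_i'(\sigma w_i)\,d\sigma$ versus $c_i^w=F_i'(w_i)$), though both are uniformly bounded by \eqref{lip} so your single freezing works just as well.
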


 \begin{remark}
 		$ $
 		
  In this work, we assume that $\omega_i\cap\omega=\emptyset$. This means that the followers control cannot act on the leader's domain.

 \end{remark}

 The rest of this paper is organized as follows. In Section \ref{preliminary}, we give the proof of existence, uniqueness and characterization of Nash equilibrium. Section \ref{Carleman} deals with the proof of some suitable Carleman estimates. In Section \ref{null}, we deduce the null controllability result of system \eqref{eq}. Concluding remarks is made in Section \ref{conclusion}.

\section{Preliminary results}\label{preliminary}

 \subsection{Well-posedness result of system \eqref{eq}}\label{well}
 
 In the sequel, the usual norm in $L^\infty(Q)$ will be denoted by $\|\cdot\|_{\infty}$.
 In order to study the well-posedness of system \eqref{eq}, we introduce as in \cite{cannarsa2005, cannarsa2008, cannarsa2016} the weight spaces $H^1_{a}(\Omega)$ and $H^2_{a}(\Omega)$ as follows (in the sequel, abs. cont. means absolutely continuous):
  
 \begin{equation}\label{}
 \left\{\begin{array}{llll}
 \dis H^1_{a}(\Omega)=\{u\in L^2(\Omega): u\ \mbox{is abs. cont. in}\ [0,1]: \sqrt{a}u_x\in L^2(\Omega),\ u(0)=u(1)=0\},\\
 \dis H^2_{a}(\Omega)=\{u\in H^1_{a}(\Omega): a(x)u_x\in H^1(\Omega)\},
 \end{array}
 \right.
 \end{equation}
 endowed respectively with the norms
 \begin{equation}\label{}
 \left\{\begin{array}{llll}
 \dis \|u\|^2_{H^1_{a}(\Omega)}=\|u\|^2_{L^2(\Omega)}+\|\sqrt{a}u_x\|^2_{L^2(\Omega)},\ \ \ u\in H^1_{a}(\Omega),\\
 \dis \|u\|^2_{H^2_{a}(\Omega)}=\|u\|^2_{H^1_{a}(\Omega)}+\|(a(x)u_x)_x\|^2_{L^2(\Omega)},\ \ \ u\in H^2_{a}(\Omega).
 \end{array}
 \right.
 \end{equation}
	Under the assumptions \eqref{k}, we have the following embedding (see \cite[Lemma 2.2]{floridia2015})
\begin{equation}\label{embedd}
	H^1_a(\Omega)\hookrightarrow L^\infty(\Omega).
	\end{equation}

 Proceeding as in \cite[Proposition 2.3]{maniar2011} by using the  semi-group  theory or using the variational approach \cite[Theorem 1.1, page 37]{Lions1961}, we can prove the following result.  
 
\begin{prop}\label{exis}$ $
	
	Assume that \eqref{k} and \eqref{lip} are valid. Let $y^0\in [L^2(\Omega)]^2$, $h\in L^2(\omega_T)$ and   $(v^1, v^2)^{t}\in \mathcal{H}=L^2((0,T);L^2(\omega_1))\times L^2((0,T);L^2(\omega_2))$. Then, the system \eqref{eq} admits a unique weak solution 
	\begin{equation}
		y\in\mathbb{H}=L^2((0,T);[H^1_{a}(\Omega)]^2)\cap[\mathcal{C}([0,T];L^2(\Omega))]^2.
	\end{equation}
	Moreover, there exists a constant $C=C(T,K)>0$ such that the following estimation holds:
	\begin{equation}\label{esty1y2}
		\begin{array}{llllll}
			\dis \sup_{\tau\in [0,T]}\|y_1(\tau,\cdot)\|^2_{L^2(\Omega)}+\sup_{\tau\in [0,T]}\|y_2(\tau,\cdot)\|^2_{L^2(\Omega)}+\|y_1\|^2_{L^2((0,T); H^1_a(\Omega))}+\|y_2\|^2_{L^2((0,T); H^1_a(\Omega))}\\\dis 
			\\
			\leq 
			C\left(\|v^1\|^2_{L^2(\omega_{1,T})}+\|v^2\|^2_{L^2(\omega_{2,T})}+\|h\|^2_{L^2(\omega_T)}+\|y_1^0\|^2_{L^2(\Omega)}+\|y_2^0\|^2_{L^2(\Omega)}\right),
		\end{array}
	\end{equation} 	
where $K$ is the Lipschitz constant.
\end{prop}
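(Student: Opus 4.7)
The plan is to treat system \eqref{eq} as an abstract semi-linear evolution problem in $[L^2(\Omega)]^2$ and reduce it to the well-developed linear theory for the degenerate operator together with a fixed-point argument for the non-linearity.

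First, I would set up the linear part. The bilinear form $b(u,v)=\int_0^1 a(x)u_x v_x\,dx$ is continuous and coercive on $H^1_a(\Omega)$ thanks to \eqref{k} and a weighted Poincar\'e inequality (as in the references \cite{cannarsa2005,cannarsa2008,maniar2011}). It therefore defines a positive self-adjoint operator $A u=-(a(x)u_x)_x$ on $L^2(\Omega)$ with domain $H^2_a(\Omega)$, and $-A$ generates an analytic semigroup. Existence, uniqueness and the standard maximal-regularity estimate for the purely linear Cauchy problem $u_t+Au=f$, $u(0)=u^0$, in the class $L^2(0,T;H^1_a)\cap \mathcal{C}([0,T];L^2)$ follow at once.

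Second, I would handle the non-linearities and the coupling by Banach fixed point. Since the coupling is cascade (only $d\,y_1$ appears in the second equation), I would proceed in two stages. Stage (i): solve the scalar semi-linear degenerate equation for $y_1$ with forcing $h\chi_\omega+v^1\chi_{\omega_1}+v^2\chi_{\omega_2}-F_1(y_1)$. Because $F_1(0)=0$ and $|F_1'|\le M$ by \eqref{lip}, the map $\tilde y_1\mapsto y_1$ defined by solving the linear problem with $F_1(\tilde y_1)$ on the right-hand side is a contraction on $\mathcal{C}([0,T_0];L^2(\Omega))$ for $T_0=T_0(M)$ small; global existence on $[0,T]$ follows since $T_0$ is independent of the initial data. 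Stage (ii): plug this $y_1$ into the second equation and repeat the argument for $y_2$, with the extra linear term $-d\,y_1$ treated as a $L^2(Q)$ source (using $d\in L^\infty(Q)$ and the $L^2$ bound on $y_1$ from stage (i)).

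Third, I would prove the estimate \eqref{esty1y2} by the usual energy method. Testing the first equation by $y_1$, integrating over $\Omega$, using the Dirichlet boundary conditions and the bound $|F_1(r)r|\le M r^2$ from \eqref{lip}, then applying Young's inequality on the source terms and Gronwall's lemma, yields
\begin{equation*}
\sup_{\tau\in[0,T]}\|y_1(\tau)\|^2_{L^2(\Omega)}+\|y_1\|^2_{L^2(0,T;H^1_a(\Omega))}\le C\bigl(\|y_1^0\|^2_{L^2(\Omega)}+\|h\|^2_{L^2(\omega_T)}+\|v^1\|^2_{L^2(\omega_{1,T})}+\|v^2\|^2_{L^2(\omega_{2,T})}\bigr).
\end{equation*}
Testing the second equation by $y_2$ and using $\int_\Omega d\,y_1 y_2\,dx\le \|d\|_\infty\|y_1\|_{L^2}\|y_2\|_{L^2}$, together with the bound just obtained on $y_1$ and a second application of Gronwall, gives the analogous estimate for $y_2$; summing yields \eqref{esty1y2} with $C=C(T,K,\|d\|_\infty)$.

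The main obstacle is not the non-linearity (which is tame because of the global Lipschitz bound \eqref{lip}) nor the coupling (which is cascade and linear with bounded coefficient), but rather the verification of the linear degenerate framework: coercivity of $b$ on $H^1_a$ and the density of $H^2_a$ in $H^1_a$, both of which require some care at the degeneracy point $x=0$. Once these are granted from the cited degenerate-parabolic literature, the rest of the argument is standard, and the embedding \eqref{embedd} can be invoked whenever an $L^\infty$ control of the states is convenient.
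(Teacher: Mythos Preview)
Your proposal is correct and is precisely the kind of argument the paper has in mind: the paper does not give a proof but merely refers to \cite[Proposition~2.3]{maniar2011} for the semigroup approach and \cite[Theorem~1.1, p.~37]{Lions1961} for the variational approach. Your sketch (linear degenerate theory via the coercive bilinear form on $H^1_a$, then Banach fixed point for the globally Lipschitz nonlinearity, then energy estimates by testing and Gronwall) is a faithful expansion of exactly those references, so there is nothing to compare.
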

 
We state the Hardy-Poincar\'e inequality that will be useful for the rest of the paper. This inequality is similar to the one stated on \cite[Proposition 2.1]{alabau2006} as well as its proof.

\begin{prop}(Hardy-Poincar\'e inequality) $ $
	
	Assume that $a:[0;1]\longrightarrow\R_+$ is in $\mathcal{C}([0;1])$, $a(0)=0$ and $a>0$ on $(0;1]$. Furthermore, assume that $a$ is such that there exists $\theta\in (0;1)$ such that the function $\dis x\longmapsto\frac{a(x)}{x^\theta}$ is non-increasing in a neighbourhood of zero. Then, there is a constant $\overline{C}>0$ such that for any $z$, locally absolutely continuous on $(0;1]$, continuous at $0$ and satisfying $z(0)=0$ and $\dis \int_0^1 a(x)|z^\prime(x)|^2\ dx<+\infty$, the following inequality holds
	
	\begin{equation}\label{hardy}
	\int_0^1 \frac{a(x)}{x^2}|z(x)|^2\ dx<\overline{C}\int_0^1 a(x)|z^\prime(x)|^2\ dx.	
	\end{equation}
	Moreover, under the same hypothesis on $z$ and the fact that the function $\dis x\longmapsto\frac{a(x)}{x^\theta}$ is non-increasing on $(0;1]$, then the inequality \eqref{hardy} holds with $\dis \overline{C}=\frac{4}{(1-\theta)^2}.$	
\end{prop}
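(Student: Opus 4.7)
The plan is to mimic the classical Hardy inequality proof ($\int_0^1 z^2/x^2\,dx \le 4\int_0^1 (z')^2\,dx$), but with the auxiliary primitive adapted to the weight $a(x)$. The key observation is that the monotonicity of $x\mapsto a(x)/x^\theta$ is exactly the integrated form of the pointwise condition $xa'(x)\le \theta a(x)$. So I would introduce
\[
f(x)=-\frac{a(x)}{(1-\theta)\,x},
\]
compute
\[
f'(x)=\frac{a(x)}{(1-\theta)x^{2}}-\frac{a'(x)}{(1-\theta)x},
\]
and then use the inequality $a'(x)\le \theta a(x)/x$ (which follows a.e.\ from the non-increasing character of $a/x^\theta$, after a smoothing argument if $a$ is only continuous) to get the crucial pointwise estimate $f'(x)\ge a(x)/x^{2}$. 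This turns the left-hand side of \eqref{hardy} into $\int_0^1 z^2 f'\,dx$ up to a non-negative error.

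Next I would integrate by parts on $[\varepsilon,1]$ and send $\varepsilon\to 0^+$. This produces
\[
\int_\varepsilon^1 z^2 f'\,dx=z(1)^2 f(1)-z(\varepsilon)^2 f(\varepsilon)+\frac{2}{1-\theta}\int_\varepsilon^1 zz'\,\frac{a(x)}{x}\,dx.
\]
The $x=1$ term is non-positive because $f(1)<0$, so it can be discarded. The $x=\varepsilon$ term equals $z(\varepsilon)^2 a(\varepsilon)/((1-\theta)\varepsilon)$, which is non-negative; I would handle it by showing that one can choose a sequence $\varepsilon_n\to 0$ along which this boundary contribution tends to $0$. The argument is: since $z$ is abs.\ cont.\ with $z(0)=0$ and $\int_0^1 a(x)|z'(x)|^2\,dx<\infty$, Cauchy--Schwarz gives $z(\varepsilon)^2\le \varepsilon \int_0^\varepsilon |z'|^2$; combined with the bound $a(x)\le C x^\theta$ coming from $a/x^\theta$ non-increasing (it is bounded on $[\varepsilon_0,1]$ and dominated by $x^\theta$ near zero in a one-sided sense that is enough for the liminf), one checks that $\liminf_{\varepsilon\to 0}z(\varepsilon)^2 a(\varepsilon)/\varepsilon =0$. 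An equivalent and cleaner variant is to first prove \eqref{hardy} for test functions compactly supported in $(0,1]$ (where all boundary issues vanish) and then pass to the general $z$ by truncation $z_n(x)=z(x)\eta_n(x)$ with $\eta_n$ a smooth cutoff vanishing on $[0,1/n]$, using dominated convergence on both sides. After dropping the good boundary terms, splitting $a(x)/x = \sqrt{a(x)}\,\cdot\,\sqrt{a(x)}/x$ and applying Cauchy--Schwarz yields
\[
\int_0^1 \frac{a(x)}{x^2}z^2\,dx \le \frac{2}{1-\theta}\left(\int_0^1\frac{a(x)}{x^2}z^2\,dx\right)^{1/2}\!\!\left(\int_0^1 a(x)|z'|^2\,dx\right)^{1/2},
\]
and dividing by the square root of the left-hand side gives the sharp constant $\overline{C}=4/(1-\theta)^2$.

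For the first, weaker statement (monotonicity only in a neighbourhood of $0$), I would fix $\delta\in(0,1]$ on which $a/x^\theta$ is non-increasing and split $\int_0^1 = \int_0^\delta + \int_\delta^1$. The first piece is handled by the argument above, localised to $(0,\delta)$ via a cutoff (the boundary term at $\delta$ is controlled by $\|a\|_{L^\infty([0,1])}\,\|z\|_{L^\infty([0,\delta])}^2/\delta$, reabsorbed into $\int_0^1 a|z'|^2$ through a standard localisation estimate). The second piece is immediate: on $[\delta,1]$ the weight $a(x)/x^2$ is bounded, so one applies the ordinary Poincaré inequality together with the lower bound $a\ge \inf_{[\delta,1]}a >0$. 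Combining the two bounds produces a finite constant $\overline{C}$, which is no longer sharp.

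The only delicate point is the disposal of the boundary term at $x=0$, since $z$ is only assumed continuous at $0$ with $z(0)=0$ and $\sqrt{a}\,z'\in L^2$; a crude estimate like $|z(x)|^2\le x\int_0^x|z'|^2$ does not immediately kill $a(x)z(x)^2/x$ because $a$ may behave like $x^\alpha$ with $\alpha$ small. The cleanest way around this is the density/truncation route (prove \eqref{hardy} first for $z\in C_c^\infty((0,1])$ and extend by a monotone truncation $z\eta_n$), which avoids any ad hoc liminf selection and is the step I would expect to write out with the most care.
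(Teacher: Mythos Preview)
The paper does not actually prove this proposition: immediately before the statement it says the inequality and its proof are taken from \cite[Proposition~2.1]{alabau2006}, and no argument is given. So there is no ``paper's own proof'' to compare against; your task was really to reconstruct the Alabau-Boussouira--Cannarsa--Fragnelli argument, and your outline is indeed the standard one (auxiliary primitive, integration by parts, Cauchy--Schwarz), yielding the correct sharp constant $4/(1-\theta)^2$.

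Two small points in your write-up deserve correction, both concerning the boundary term at $x=0$. First, the estimate $z(\varepsilon)^2\le \varepsilon\int_0^\varepsilon|z'|^2$ is not available: you only know $\sqrt{a}\,z'\in L^2$, not $z'\in L^2$. The weighted version $|z(\varepsilon)|^2\le \bigl(\int_0^\varepsilon a^{-1}\bigr)\bigl(\int_0^\varepsilon a|z'|^2\bigr)$ is what you should use, and here the monotonicity of $a/x^\theta$ gives the \emph{lower} bound $a(x)\ge a(1)x^\theta$ (not the upper bound $a(x)\le Cx^\theta$ you wrote), which makes $\int_0^\varepsilon a^{-1}$ finite. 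Even so, this only yields $\frac{a(\varepsilon)}{\varepsilon}z(\varepsilon)^2\lesssim \frac{a(\varepsilon)}{\varepsilon^\theta}\int_0^\varepsilon a|z'|^2$, and $a(\varepsilon)/\varepsilon^\theta$ may blow up as $\varepsilon\to 0$, so the direct limit argument does not close. The clean fix---simpler than the cutoff $\eta_n$ you sketched, which runs into the same circularity---is to set $z_\delta(x)=z(x)-z(\delta)$ on $[\delta,1]$ so that the boundary term at $x=\delta$ vanishes identically, obtain $\int_\delta^1 \frac{a}{x^2}|z_\delta|^2\le \frac{4}{(1-\theta)^2}\int_\delta^1 a|z'|^2$, and pass to the limit via Fatou's lemma using $z_\delta(x)\to z(x)$ pointwise. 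This avoids all boundary issues and gives the sharp constant directly.
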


\subsection{Characterization of Nash equilibrium}\label{low}

First, we give the following characterization of Nash quasi-equilibrium pair (recall Definition \ref{dfntquasi}) $(\hat{v}^1,\ \hat{v}^2)$ of \eqref{eq} for $J_i,\ i=1,2$ given by \eqref{all16}.

\begin{prop}\label{quasi}$ $
	
Let $h\in L^2(\omega_T)$ and assume that $\mu_i,\ i=1,2$ are sufficiently large. Let also $(\hat{v}^1,\ \hat{v}^2)\in \mathcal{H}=L^2((0,T);L^2(\omega_1))\times L^2((0,T);L^2(\omega_2))$ be the Nash quasi-equilibrium pair for $(J_1,J_2)$. Then, there exists $p^i=(p_1^i, p_2^i)\in \mathbb{H}$ such that Nash quasi-equilibrium pair $(\hat{v}^1, \hat{v}^2)\in \mathcal{H}$ is characterized by
\begin{equation}\label{vop}
	\hat{v}^i=-\frac{1}{\mu_i}\rho_{*}^{-2}p_1^i\ \ \mbox{in}\ \ \ (0,T)\times \omega_i,	
\end{equation}
where $y=(y_1,y_2)$ and $p^i=(p_1^i,p_2^i)$ are solutions of the following optimality systems
\begin{equation}\label{yop}
	\left\{
	\begin{array}{rllll}
		\dis y_{1,t}-\left(a(x)y_{1,x}\right)_{x}+F_1(y_1)  &=&\dis h\chi_{\omega}-\frac{1}{\mu_1}\rho_{*}^{-2}p_1^1\chi_{\omega_1}-\frac{1}{\mu_2}\rho_{*}^{-2}p_1^2\chi_{\omega_2}& \mbox{in}& Q,\\
		\dis y_{2,t}-\left(a(x)y_{2,x}\right)_{x}+F_2(y_2)+dy_1  &=&0& \mbox{in}& Q,\\
		\dis y_1(t,0)=y_1(t,1)=y_2(t,0)=y_2(t,1)&=&0& \mbox{on}& (0,T), \\
		\dis y_1(0,\cdot)=y_1^0,\ \ y_2(0,\cdot)=y_2^0&& &\mbox{in}&\Omega
	\end{array}
	\right.
\end{equation}
and 
\begin{equation}\label{pop}
	\left\{
	\begin{array}{rllll}
		\dis -p_{1,t}^i-\left(a(x)p^i_{1,x}\right)_{x}+F^\prime_1(y_1)p_1^i+dp_2^i   &=&\alpha_i\left(y_1-y_{1,d}^i\right)\chi_{\omega_{i,d}}& \mbox{in}& Q,\\
		\dis -p_{2,t}^i-\left(a(x)p^i_{2,x}\right)_{x}+F^\prime_2(y_2)p_2^i  &=&\alpha_i\left(y_2-y_{2,d}^i\right)\chi_{\omega_{i,d}}& \mbox{in}& Q,\\
		\dis p_1^i(t,0)=p_1^i(t,1)=p_2^i(t,0)=p_2^i(t,1)&=&0& \mbox{on}& (0,T), \\
		\dis p_1^i(T,\cdot)= p_2^i(T,\cdot)&=&0 &\mbox{in}&\Omega.
	\end{array}
	\right.
\end{equation}

\end{prop}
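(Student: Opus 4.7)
The plan is to derive the characterization from the optimality condition in Definition \ref{dfntquasi} by computing the Gâteaux derivative of $J_i$ at the quasi-equilibrium, introducing an adjoint state to rewrite the observation term as a duality pairing against the control direction, and then invoking arbitrariness of the test function. Fix $h$, fix $(\hat v^1,\hat v^2)$ a Nash quasi-equilibrium, and let $y=(y_1,y_2)$ denote the corresponding state, which solves \eqref{yop} since $\hat v^i$ are the followers. For each direction $w^i\in L^2((0,T);L^2(\omega_i))$, I would first show that the solution map $v^i\mapsto y$ is Gâteaux differentiable and that the derivative $z^i=(z_1^i,z_2^i)$ in the direction $w^i$ (with the other follower frozen at $\hat v^j$) solves the linearized problem
\begin{equation*}
\left\{
\begin{array}{rl}
z^i_{1,t}-(a(x)z^i_{1,x})_x+F_1'(y_1)z_1^i &= w^i\chi_{\omega_i}\quad\text{in }Q,\\
z^i_{2,t}-(a(x)z^i_{2,x})_x+F_2'(y_2)z_2^i+d\,z_1^i &= 0\quad\text{in }Q,\\
z_1^i=z_2^i=0\ \text{on } (0,T)\times\{0,1\},&\ z_1^i(0,\cdot)=z_2^i(0,\cdot)=0.
\end{array}
\right.
\end{equation*}
This linearized system is well-posed in $\mathbb H$ by the same arguments as in Proposition~\ref{exis} (the coefficients $F_j'(y_j)$ are bounded thanks to \eqref{lip}), and a standard difference-quotient estimate in $\mathbb H$ yields the Gâteaux differentiability together with the formula
\[
D_iJ_i(h;\hat v^1,\hat v^2)\cdot w^i=\alpha_i\!\!\int_0^T\!\!\int_{\omega_{i,d}}\!\!\bigl[(y_1-y^i_{1,d})z_1^i+(y_2-y^i_{2,d})z_2^i\bigr]\dt+\mu_i\!\!\int_0^T\!\!\int_{\omega_i}\!\!\rho_*^2\hat v^i w^i\dt.
\]

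Next, I would introduce $p^i=(p_1^i,p_2^i)$ as the unique solution in $\mathbb H$ of the backward system \eqref{pop}; its existence and regularity follow from the same well-posedness argument after the change of variables $t\mapsto T-t$, the right-hand sides being in $L^2(Q)$ because $y\in\mathbb H$ and $y^i_d\in L^\infty$. To eliminate $z^i$, multiply the first linearized equation by $p_1^i$ and the second by $p_2^i$, integrate over $Q$, and perform integration by parts. In time the boundary terms vanish because $z^i_k(0,\cdot)=0$ and $p^i_k(T,\cdot)=0$. In space the boundary contributions $[a(x)z^i_{k,x}p^i_k-a(x)p^i_{k,x}z^i_k]_0^1$ vanish at $x=1$ by the Dirichlet condition and at $x=0$ because $a(0)=0$ together with the $H^1_a(\Omega)$-framework (both $z^i_k$ and $p^i_k$ are in $H^1_a(\Omega)\hookrightarrow L^\infty(\Omega)$ by \eqref{embedd}, and $a(x)z^i_{k,x}\in H^1(\Omega)$ vanishes at $0$ in the trace sense, as in \cite{cannarsa2008}). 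Combining the two integrations by parts with the adjoint equations \eqref{pop} yields
\[
\int_0^T\!\!\int_{\omega_i}w^i p_1^i\dt=\alpha_i\!\!\int_0^T\!\!\int_{\omega_{i,d}}\!\!\bigl[(y_1-y^i_{1,d})z_1^i+(y_2-y^i_{2,d})z_2^i\bigr]\dt.
\]
Substituting this identity in the expression of $D_iJ_i$ gives
\[
D_iJ_i(h;\hat v^1,\hat v^2)\cdot w^i=\int_0^T\!\!\int_{\omega_i}\bigl(p_1^i+\mu_i\rho_*^2\hat v^i\bigr)w^i\dt,
\]
and the quasi-equilibrium condition \eqref{Nashprime} forces $p_1^i+\mu_i\rho_*^2\hat v^i=0$ a.e.\ on $(0,T)\times\omega_i$ by the arbitrariness of $w^i$, which is exactly \eqref{vop}.

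The main obstacle I anticipate is the integration-by-parts step at the degenerate boundary $x=0$: the weight $a$ vanishes there, so one must verify carefully that both $a(x)z^i_{k,x}p^i_k$ and $a(x)p^i_{k,x}z^i_k$ admit a vanishing trace at $0$, using only the $H^1_a$-regularity provided by Proposition~\ref{exis}; this is the standard technical point in degenerate problems and can be handled by a density argument approximating elements of $H^2_a(\Omega)$ by functions supported away from $x=0$, exploiting the Hardy--Poincaré inequality to control the limiting passage. A secondary technical point is justifying the Gâteaux differentiability of $v^i\mapsto y$ uniformly near $\hat v^i$, for which the bound \eqref{esty1y2} combined with the global Lipschitz bound on $F_j'$ in \eqref{lip} suffices; note that no largeness of $\mu_i$ is needed for the characterization itself, that hypothesis being reserved for Theorem~\ref{nash} to upgrade the quasi-equilibrium to a genuine Nash equilibrium.
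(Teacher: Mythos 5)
Your proposal follows the same route as the paper's proof: write the Euler--Lagrange condition for the quasi-equilibrium in terms of the linearized system $z^i$, introduce the adjoint state $p^i$ solving \eqref{pop}, integrate by parts over $Q$ to convert the observation term into the duality pairing $\int_0^T\!\!\int_{\omega_i}w^i p_1^i\,dx\,dt$, and conclude \eqref{vop} from the arbitrariness of the test direction. The additional care you take with the degenerate boundary terms at $x=0$ and with the Gâteaux differentiability of the control-to-state map (points the paper leaves implicit), as well as your observation that the largeness of $\mu_i$ is not actually used in this characterization, are all consistent with the paper's argument.
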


\begin{proof}
	
If $(\hat{v}^1,\ \hat{v}^2)$ is a Nash quasi-equilibrium in the sense of the Definition \ref{dfntquasi}, then we have
\begin{equation}\label{EL}
	\begin{array}{rll}
		&&\dis 	\alpha_i\int_0^T\int_{\omega_{i,d}}\left[(y_1-y_{1,d}^i)z_1^i+(y_2-y_{2,d}^i)z_2^i\right]\ \dT\\
		&&\dis +\mu_i\int_0^T\int_{\omega_i}\rho_{*}^{2}\hat{v}^iv^i\ \dT=0,\ \mbox{for all}\  v^i\in L^2((0,T);L^2(\omega_i)),
	\end{array}
\end{equation}
where $z^i=(z_1^i, z_2^i)$ is the solution of the following system
\begin{equation}\label{zop}
	\left\{
	\begin{array}{rllll}
		\dis z_{1,t}^i-\left(a(x)z^i_{1,x}\right)_{x}+F^\prime_1(y_1)z_1^i   &=&v^i\chi_{\omega_{i}}& \mbox{in}& Q,\\
		\dis z_{2,t}^i-\left(a(x)z^i_{2,x}\right)_{x}+F^\prime_2(y_2)z_2^i+dz_1^i  &=&0& \mbox{in}& Q,\\
		\dis z_1^i(t,0)=z_1^i(t,1)=z_2^i(t,0)=z_2^i(t,1)&=&0& \mbox{on}& (0,T), \\
		\dis z_1^i(0,\cdot)= z_2^i(0,\cdot)&=&0 &\mbox{in}&\Omega.
	\end{array}
	\right.
\end{equation}
If we multiply \eqref{pop} by $z^i$ solution of \eqref{zop} and integrate by parts on $Q$, we obtain
\begin{equation*}
	\begin{array}{rll}
		\dis 	\alpha_i\int_0^T\int_{\omega_{i,d}}\left[(y_1-y_{1,d}^i)z_1^i+(y_2-y_{2,d}^i)z_2^i\right]\ \dT =\int_0^T\int_{\omega_i}v^ip_1^i\ \dT=0.
	\end{array}
\end{equation*}
Combining this latter equality with \eqref{EL}, we obtain
\begin{equation*}
	\begin{array}{rll}
		\dis \int_0^T\int_{\omega_i}v^i\left(p_1^i+\mu_i\rho_{*}^{2}\hat{v}^i\right)\ \dT=0,\ \mbox{for all}\  v^i\in L^2((0,T);L^2(\omega_i)),
	\end{array}
\end{equation*}
from where \eqref{vop}-\eqref{pop} follows.
\end{proof}

 \begin{remark}
 	$ $
 	\begin{enumerate}
 		\item Notice that the existence and uniqueness of a solution for \eqref{yop}-\eqref{pop} implies the existence and uniqueness of a Nash quasi-equilibrium in the sense of Definition \ref{dfntquasi}. Proposition \ref{exis} guarantees the existence and uniqueness of solution for system \eqref{yop}-\eqref{pop}.
 		
 		\item Using the idea of  \cite[Proposition $2.1$]{djomegne2021}, we can prove the existence of a constant $C>0$ such that
 		\begin{equation}\label{v}
 		\|(\hat{v}^1,\hat{v}^2)\|_{\mathcal{H}}\leq 
 		C\left(1+\|h\|_{L^2(\omega_T)}\right).
 		\end{equation} 	
 	\end{enumerate}	
 \end{remark}

Now, we introduce the following results useful for proving the Theorem \ref{nash}.
	We consider the system
\begin{equation}\label{modeli}
\left\{
\begin{array}{rllll}
\dis z_{t}-\left(a(x)z_{x}\right)_{x}+c(t,x)z &=&g& \mbox{in}& Q,\\
\dis z(\cdot,0)=z(\cdot,1)&=&0& \mbox{in}& (0,T), \\
\dis  z(0,\cdot)&=&z^0 &\mbox{in}&\Omega.
\end{array}
\right.
\end{equation}
We set \begin{equation}\label{defk}
\mathbb{K}:=H^1((0,T);L^2(\Omega))\cap L^2((0,T);H^2_a(\Omega))\cap \mathcal{C}([0,T];H^1_a(\Omega)).
\end{equation}
Then we have the following result proved in \cite[Theorem 2.1]{alabau2006}.
\begin{theorem}\label{theoint}
	Let $z^0\in H^1_a(\Omega)$, $c\in L^\infty(Q)$ and $g\in L^2(Q)$. Then, the system \eqref{modeli} admits a
	unique weak solution $z\in	\mathbb{K} $. Moreover there exists a positive constant $C=C(T)$ such that
	\begin{equation}\label{estimationint}
	\begin{array}{llll}
	\dis\sup_{t\in [0,T]}\|z(t)\|^2_{H^1_a(\Omega)}+\int_{0}^{T}\left(\|z_t\|^2_{L^2(\Omega)}+\|(a(x)z_x)_x\|^2_{L^2(\Omega)}\right)\, \dt
	\leq C(\|z^0\|^2_{H^1_a(\Omega)}+\|g\|^2_{L^2(Q)}).
	\end{array}
	\end{equation}	
\end{theorem}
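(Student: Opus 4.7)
The plan is to construct the solution by the Faedo--Galerkin method adapted to the degenerate setting, following the scheme used for linear degenerate parabolic problems in \cite{alabau2006, cannarsa2008}.

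First I would choose a Hilbert basis $\{e_k\}_{k \geq 1}$ of $L^2(\Omega)$ consisting of eigenfunctions of the operator $A u := -(a(x) u_x)_x$ with homogeneous Dirichlet boundary conditions. Under assumption \eqref{k}, $A$ is positive, self-adjoint on $L^2(\Omega)$ with compact resolvent, so such a basis exists, with eigenvalues $\lambda_k \to +\infty$, and it is simultaneously orthogonal in $H^1_a(\Omega)$ and in $H^2_a(\Omega)$. I would then seek Galerkin approximations $z^n(t,x) = \sum_{k=1}^n \alpha_k^n(t)\, e_k(x)$ solving the projection of \eqref{modeli} onto $\mathrm{span}(e_1,\dots,e_n)$ with initial datum $z^n(0,\cdot)$ the $H^1_a$-projection of $z^0$. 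Local existence of $z^n$ is a classical consequence of ODE theory and is extended globally once the first a priori estimate below is available.

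Next I would derive two energy estimates. Testing the projected equation with $z^n$ and using the Dirichlet condition to kill the boundary contribution gives
\begin{equation*}
\frac{1}{2}\frac{d}{dt}\|z^n\|^2_{L^2(\Omega)} + \|\sqrt{a}\, z^n_x\|^2_{L^2(\Omega)} \leq \|c\|_\infty \|z^n\|^2_{L^2(\Omega)} + \|g\|_{L^2(\Omega)}\|z^n\|_{L^2(\Omega)},
\end{equation*}
so that Gr\"onwall's lemma provides a uniform $L^\infty(0,T;L^2(\Omega)) \cap L^2(0,T;H^1_a(\Omega))$ bound. Testing next with $-(a z^n_x)_x$, which is a valid test function since $-(a z^n_x)_x = \sum_k \lambda_k \alpha_k^n e_k$ lies in $\mathrm{span}(e_1,\dots,e_n)$, I would derive
\begin{equation*}
\frac{1}{2}\frac{d}{dt}\|\sqrt{a}\, z^n_x\|^2_{L^2(\Omega)} + \|(a z^n_x)_x\|^2_{L^2(\Omega)} \leq \bigl(\|c\|_\infty \|z^n\|_{L^2(\Omega)} + \|g\|_{L^2(\Omega)}\bigr)\|(a z^n_x)_x\|_{L^2(\Omega)},
\end{equation*}
and, after a Young inequality to absorb $\|(a z^n_x)_x\|_{L^2(\Omega)}$ on the right, combined with the first bound, conclude uniform estimates in $L^\infty(0,T;H^1_a(\Omega)) \cap L^2(0,T;H^2_a(\Omega))$. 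Rewriting the equation as $z^n_t = (a z^n_x)_x - c z^n + g$ then yields a uniform $L^2(Q)$ bound on $z^n_t$ and closes \eqref{estimationint} at the Galerkin level.

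The passage to the limit is routine: weak-$\ast$ compactness gives a subsequence $z^n \rightharpoonup z$ in the appropriate spaces, linearity of \eqref{modeli} allows one to take the limit in the projected equations, and weak lower semicontinuity transmits \eqref{estimationint} to $z$. Continuity $z \in \mathcal{C}([0,T];H^1_a(\Omega))$ follows from the embedding $L^2(0,T;H^2_a(\Omega)) \cap H^1(0,T;L^2(\Omega)) \hookrightarrow \mathcal{C}([0,T];H^1_a(\Omega))$ valid in the degenerate setting. Uniqueness is immediate by applying the first energy estimate to the difference of two solutions. The step I expect to be the most delicate is precisely the second energy estimate: the identity $-\int_0^1 z^n_t\,(a z^n_x)_x\, dx = \frac{1}{2}\frac{d}{dt}\|\sqrt{a}\, z^n_x\|^2_{L^2(\Omega)}$ rests on an integration by parts whose boundary term $\bigl[a(x)\, z^n_x\, z^n_t\bigr]_0^1$ must vanish; the Dirichlet condition handles the trace of $z^n_t$, but giving rigorous meaning to $a z^n_x$ at $x=0$ relies on the structural condition $x a'(x) \leq \tau a(x)$ with $\tau < 1$ in \eqref{k} (and the associated Hardy--Poincar\'e inequality \eqref{hardy}) to guarantee that $a z^n_x \in H^1(\Omega)$ with well-defined trace. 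Ensuring that this identity survives the weak-limit passage, rather than holding only at the Galerkin level, is the technical heart of the proof.
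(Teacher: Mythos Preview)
The paper does not prove this theorem at all: it is stated with the attribution ``proved in \cite[Theorem 2.1]{alabau2006}'' and no argument is given. Your Faedo--Galerkin plan is precisely the approach used in that reference, so in substance you are reproducing the proof the paper defers to.

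One small remark on the point you flag as most delicate. At the Galerkin level the boundary term $[a(x)\,z^n_x\,z^n_t]_0^1$ vanishes simply because $z^n_t$ inherits the Dirichlet condition from the eigenfunctions; the trace of $a z^n_x$ at $x=0$ is not actually needed for this, only that the product makes sense, which it does since $e_k\in H^2_a(\Omega)$ and hence $a e_{k,x}\in H^1(\Omega)\hookrightarrow C([0,1])$. The structural hypothesis $x a'(x)\le\tau a(x)$ with $\tau<1$ enters earlier, in guaranteeing that $A=-(a(\cdot)_x)_x$ with domain $H^2_a(\Omega)$ is self-adjoint with compact resolvent so that the spectral basis exists in the first place, and in the Hardy--Poincar\'e inequality underpinning the embedding $H^1_a\hookrightarrow L^\infty$. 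So the identity you worry about is safe; the genuine weight of \eqref{k} is carried by the functional-analytic setup rather than by this particular integration by parts.
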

\begin{leme}\label{sauveur}
	Let $z^0\in H^1_a(\Omega)$, $c\in L^\infty(Q)$ and $g\in L^2(Q)$. Then the
unique weak solution $z\in	\mathbb{K} $ of \eqref{modeli} belongs to $L^\infty(Q)$. Moreover there exists a positive constant $C=C(T)$ such that
\begin{equation}\label{estimation}
\begin{array}{llll}
\|z\|_{L^\infty(Q)}
\leq C(\|z^0\|_{H^1_a(\Omega)}+\|g\|_{L^2(Q)}).
\end{array}
\end{equation}	
\end{leme}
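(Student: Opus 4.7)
The argument is essentially a two-line combination of Theorem \ref{theoint} and the degenerate Sobolev embedding \eqref{embedd}, so the task is structural rather than technical. First I would apply Theorem \ref{theoint} to the Cauchy problem \eqref{modeli} with data $z^0 \in H^1_a(\Omega)$, $c \in L^\infty(Q)$, $g \in L^2(Q)$; this yields the unique weak solution $z \in \mathbb{K}$, which by the very definition \eqref{defk} of $\mathbb{K}$ belongs to $\mathcal{C}([0,T];H^1_a(\Omega))$. The energy estimate \eqref{estimationint} then provides
\[
\sup_{t\in[0,T]} \|z(t,\cdot)\|_{H^1_a(\Omega)}^{2} \;\leq\; C\bigl(\|z^0\|_{H^1_a(\Omega)}^{2} + \|g\|_{L^2(Q)}^{2}\bigr),
\]
from which, after taking square roots and absorbing a numerical factor into $C$, I would keep the simpler form $\sup_{t\in[0,T]} \|z(t,\cdot)\|_{H^1_a(\Omega)} \leq C\bigl(\|z^0\|_{H^1_a(\Omega)} + \|g\|_{L^2(Q)}\bigr)$.

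Next I would invoke the embedding \eqref{embedd}, $H^1_a(\Omega) \hookrightarrow L^\infty(\Omega)$, to obtain a constant $C_0>0$ such that
\[
\|u\|_{L^\infty(\Omega)} \;\leq\; C_0\,\|u\|_{H^1_a(\Omega)} \qquad \text{for every } u \in H^1_a(\Omega).
\]
Applying this pointwise in $t$ with $u = z(t,\cdot)$ and using that $z \in \mathcal{C}([0,T];H^1_a(\Omega)) \hookrightarrow \mathcal{C}([0,T];L^\infty(\Omega))$, the function $t \mapsto \|z(t,\cdot)\|_{L^\infty(\Omega)}$ is continuous, hence bounded, on $[0,T]$, and
\[
\|z\|_{L^\infty(Q)} \;=\; \sup_{t\in[0,T]} \|z(t,\cdot)\|_{L^\infty(\Omega)} \;\leq\; C_0 \sup_{t\in[0,T]} \|z(t,\cdot)\|_{H^1_a(\Omega)}.
\]

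Chaining the two inequalities gives at once $\|z\|_{L^\infty(Q)} \leq C\bigl(\|z^0\|_{H^1_a(\Omega)} + \|g\|_{L^2(Q)}\bigr)$, which is exactly \eqref{estimation}. I do not anticipate any substantial obstacle: the two nontrivial ingredients (the regularity Theorem \ref{theoint} for degenerate parabolic equations and the degenerate Sobolev embedding \eqref{embedd}) are both stated earlier in the paper and are used as black boxes. The only mild care needed is to justify, via the $t$-continuity into $H^1_a(\Omega)$, that the essential supremum over $Q$ coincides with the supremum in $t$ of the time-slice $L^\infty$-norms, which is what makes the pointwise-in-$t$ application of \eqref{embedd} legitimate.
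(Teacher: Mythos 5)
Your proposal is correct and follows exactly the same route as the paper: invoke the energy estimate \eqref{estimationint} from Theorem \ref{theoint} to bound $\sup_{t\in[0,T]}\|z(t)\|_{H^1_a(\Omega)}$, then apply the embedding \eqref{embedd} slice-by-slice in $t$ and chain the two bounds. The extra remark about using the continuity $z\in\mathcal{C}([0,T];H^1_a(\Omega))$ to identify the essential supremum over $Q$ with the supremum of the time-slice norms is a small precision the paper glosses over, but it does not change the argument.
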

\begin{proof} Let $t\in [0,T]$, then by \eqref{estimationint}, we have 
	\begin{equation}\label{001}
	\sup_{t\in [0,T]}\|z(t)\|_{H^1_a(\Omega)}\leq C\left[\|y^0\|_{H^1_a(\Omega)}+\|g\|_{L^2(Q)}\right].
	\end{equation}
	Using the embedding \eqref{embedd}, we obtain for almost every $t\in [0,T]$,
	\begin{equation*}
	\|z(t)\|_{L^\infty(\Omega)}\leq C\|z(t)\|_{H^1_a(\Omega)}.
	\end{equation*}	
	Therefore
	\begin{equation}\label{002}
	\|z\|_{L^\infty(Q)}\leq C\sup_{t\in [0,T]}\|z(t)\|_{H^1_a(\Omega)}.
	\end{equation}	
	Combining \eqref{001} and \eqref{002}, we deduce \eqref{estimation}.
\end{proof}

Next, we prove Theorem \ref{nash} establishing the equivalence between Nash quasi-equilibrium and Nash equilibrium in the semi-linear case. The technique of the proof is inspired by \cite[Proposition 1.4]{araruna2015anash}. \paragraph{}

\textbf{Proof of Theorem \ref{nash}.} 

 Let $h\in L^2(\omega_T)$ be given and let $(\hat{v}_1, \hat{v}_2)$ be the associated Nash quasi-equilibria. For any $w^1,w^2\in L^2(\omega_{1,T})$ and $s\in \R$, let us de
 	note by $y^s=(y_1^s,y_2^s)$ the solution of the following system
 	\begin{equation}\label{ys}
 	\left\{
 	\begin{array}{rllll}
 	\dis y^s_{1,t}-\left(a(x)y^s_{1,x}\right)_{x}+F_1(y_1^s)  &=&h\chi_{\omega}+(v^1+sw^1)\chi_{\omega_1}+v^2\chi_{\omega_2}& \mbox{in}& Q,\\
 	\dis y^s_{2,t}-\left(a(x)y^s_{2,x}\right)_{x}+F_2(y^s_2)+dy^s_1  &=&0& \mbox{in}& Q,\\
 	\dis y^s_1(t,0)=y^s_1(t,1)=y^s_2(t,0)=y^s_2(t,1)&=&0& \mbox{on}& (0,T), \\
 	\dis y^s_1(0,\cdot)=y_1^0,\ \ y^s_2(0,\cdot)=y_2^0&& &\mbox{in}&\Omega
 	\end{array}
 	\right.
 	\end{equation}
 	and let us set $y_i:=y_i^s|_{s=0}$.
 	
 	Now, we have
 	\begin{equation}\label{dj}
 	\begin{array}{llll}
 	&&\dis D_1J_1(h;\hat{v}^1+sw^1,\hat{v}^2)\cdot w^2-	D_1J_1(h;\hat{v}^1,\hat{v}^2)\cdot w^2=s\mu_1\int_0^T\int_{\omega_{1}}\rho_{*}^{2}w^1w^2\ dxdt\\
 	&&\dis +\alpha_1\int_0^T\int_{\omega_{1,d}}\left[(y_1^s-y^1_{1,d})z_1^{1,s}+(y_2^s-y^1_{2,d})z_2^{1,s}\right]\ dxdt\\
 	&&\dis -\alpha_1\int_0^T\int_{\omega_{1,d}}\left[(y_1-y^1_{1,d})z_1^1+(y_2-y^1_{2,d})z_2^1\right]\ dxdt,
 	\end{array}
 	\end{equation}
 	where $z^{1,s}=(z_1^{1,s},z_2^{1,s})$ is the derivative of the state $y_i^s$ with respect to $\hat{v}^1+sw^1$ in the direction $w^2$, i.e. $z^{1,s}$ is the solution to
 	\begin{equation}\label{zs}
 	\left\{
 	\begin{array}{rllll}
 	\dis z_{1,t}^{1,s}-\left(a(x)z^{1,s}_{1,x}\right)_{x}+F^\prime_1(y_1^s)z_1^{1,s}   &=&w^2\chi_{\omega_{1}}& \mbox{in}& Q,\\
 	\dis z_{2,t}^{1,s}-\left(a(x)z^{1,s}_{2,x}\right)_{x}+F^\prime_2(y_2^s)z_2^{1,s}+dz_1^{1,s}  &=&0& \mbox{in}& Q,\\
 	\dis z_1^{1,s}(t,0)=z_1^{1,s}(t,1)=z_2^{1,s}(t,0)=z_2^{1,s}(t,1)&=&0& \mbox{on}& (0,T), \\
 	\dis z_1^{1,s}(0,\cdot)= z_2^{1,s}(0,\cdot)&=&0 &\mbox{in}&\Omega,
 	\end{array}
 	\right.
 	\end{equation}
 	and we have used the notation $z_i^1:=z_i^{1,s}|_{s=0},\ i=1,2$.
 	
 	Let us introduce the adjoint of \eqref{zs}
 	\begin{equation}\label{ps}
 	\left\{
 	\begin{array}{rllll}
 	\dis -p_{1,t}^{1,s}-\left(a(x)p^{1,s}_{1,x}\right)_{x}+F^\prime_1(y_1^s)p_1^{1,s}+dp_2^{1,s}   &=&\alpha_1\left(y_1^s-y_{1,d}^1\right)\chi_{\omega_{1,d}}& \mbox{in}& Q,\\
 	\dis -p_{2,t}^{1,s}-\left(a(x)p^{1,s}_{2,x}\right)_{x}+F^\prime_2(y_2^s)p_2^{1,s}  &=&\alpha_1\left(y_2^s-y_{2,d}^1\right)\chi_{\omega_{1,d}}& \mbox{in}& Q,\\
 	\dis p_1^{1,s}(t,0)=p_1^{1,s}(t,1)=p_2^{1,s}(t,0)=p_2^{1,s}(t,1)&=&0& \mbox{on}& (0,T), \\
 	\dis p_1^{1,s}(T,\cdot)= p_2^{1,s}(T,\cdot)&=&0 &\mbox{in}&\Omega.
 	\end{array}
 	\right.
 	\end{equation}
 	and let us use the notation $p_i^1:=p_i^{1,s}|_{s=0}$.\\
 	Multiplying the first  and the second equation of \eqref{zs} by $p_1^{1,s}$ and $p_2^{1,s}$, respectively and integrating by parts over $Q$, we obtain
 	\begin{equation}\label{zp}
 	\alpha_1\int_0^T\int_{\omega_{1,d}}\left[(y_1^s-y^1_{1,d})z_1^{1,s}+(y_2^s-y^1_{2,d})z_2^{1,s}\right]\ dxdt=\int_{Q}w^2p_1^{1,s}\chi_{\omega_1}\ dxdt.	
 	\end{equation}
 	From \eqref{dj} and \eqref{zp}, we have
 	\begin{equation}\label{dj1}
 	\begin{array}{llll}
 	\dis D_1J_1(h;\hat{v}^1+sw^1,\hat{v}^2)\cdot w^2-	D_1J_1(h;\hat{v}^1,\hat{v}^2)\cdot w^2&=&\dis s\mu_1\int_0^T\int_{\omega_{1}}\rho_{*}^{2}w^1w^2\ dxdt\\
 	\dis &+&\dis \int_0^T\int_{\omega_{1}}(p_1^{1,s}-p_1^1)w^2\ dxdt.
 	\end{array}
 	\end{equation}
 	Note that 
 	
 	\begin{equation*}
 	\left\{
 	\begin{array}{rllll}
 	\dis (y_1^{s}-y_1)_t-\left(a(x)(y_1^{s}-y_1)_x\right)_{x} +\left[F_1(y_1^s)-F_1(y_1)\right]=sw^1\chi_{\omega_1},\\
 	\\
 	\dis (y_2^{s}-y_2)_t-\left(a(x)(y_2^{s}-y_2)_x\right)_{x} +\left[F_2(y_2^s)-F_2(y_2)\right]+d(y_1^{s}-y_1)=0
 	\end{array}
 	\right.
 	\end{equation*}
 	and
 	\begin{equation*}
 		\left\{
 		\begin{array}{rllll}
 				\dis -(p_1^{1,s}-p_1^{1})_t-\left(a(x)(p_1^{1,s}-p_1^{1})_x\right)_{x} +\left[F_1^\prime(y_1^s)-F_1^\prime(y_1)\right]p_1^{1,s}+F_1^\prime(y_1)p_1^{1,s}-p_1^{1}+d(p_2^{1,s}-p_2^{1}),\\
 			\dis=\alpha_1(y_1^s-y_{1})\chi_{\omega_{1,d}},\\
 			\dis -(p_2^{1,s}-p_2^{1})_t-\left(a(x)(p_2^{1,s}-p_2^{1})_x\right)_{x} +\left[F_2^\prime(y_2^s)-F_2^\prime(y_2)\right]p_2^{1,s}+F_2^\prime(y_2)p_2^{1,s}-p_2^{1}
 			\dis=\alpha_1(y_2^s-y_{2})\chi_{\omega_{1,d}}.
 		\end{array}
 		\right.
 	\end{equation*}
 Consequently, using that $F\in \mathcal{C}^2(\R)$, we obtain that the following limits 
 	$$
 	\eta_i^{1}=\lim\limits_{s\to 0}\frac{1}{s}(p_i^{1,s}-p_i^{1})\ \ \mbox{and}\ \ \phi_i=\lim\limits_{s\to 0}\frac{1}{s}(y_i^s-y_i),\ \mbox{for}\ i=1,2
 	$$
 	exist and satisfy
 	\begin{equation}\label{eta}
 	\left\{
 	\begin{array}{rllll}
 	\dis -\eta^{1}_{1,t}-\left(a(x)\eta^{1}_{1,x}\right)_{x}+F_1^\prime(y_1)\eta_1^{1}+F^{\prime\prime}_1(y_1)\phi_1p_1^{1}+d\eta_2^{1}   &=&\alpha_1\phi_1\chi_{\omega_{1,d}}& \mbox{in}& Q,\\
 	\dis -\eta^{1}_{2,t}-\left(a(x)\eta^{1}_{2,x}\right)_{x}+F_2^\prime(y_2)\eta_2^{1}+F^{\prime\prime}_1(y_2)\phi_2p_2^{1}  &=&\alpha_1\phi_2\chi_{\omega_{1,d}}& \mbox{in}& Q,\\
 	\dis \eta_1^{1}(t,0)=\eta^{1}_1(t,1)=\eta^{1}_2(t,0)=\eta^{1}_2(t,1)&=&0& \mbox{on}& (0,T), \\
 	\dis \eta^{1}_1(T,\cdot)= \eta^{1}_2(T,\cdot)&=&0 &\mbox{in}&\Omega
 	\end{array}
 	\right.
 	\end{equation}
 	and
 	\begin{equation}\label{ha}
 	\left\{
 	\begin{array}{rllll}
 	\dis \phi_{1,t}-\left(a(x)\phi_{1,x}\right)_{x}+F_1^\prime(y_1)\phi_1 &=&w^1\chi_{\omega_1}& \mbox{in}& Q,\\
 	\dis \phi_{2,t}-\left(a(x)\phi_{2,x}\right)_{x}+F_2^\prime(y_2)\phi_2+d\phi_1  &=&0& \mbox{in}& Q,\\
 	\dis \phi_1(t,0)=\phi_1(t,1)=\phi_2(t,0)=\phi_2(t,1)&=&0& \mbox{on}& (0,T), \\
 	\dis \phi_1(0,\cdot)= \phi_2(0,\cdot)&=&0 &\mbox{in}&\Omega.
 	\end{array}
 	\right.
 	\end{equation}

 	Thus, from \eqref{dj1}-\eqref{ha} for $w^2=w^1$, we have 
 	\begin{equation}\label{dj3}
 	\dis D_1^2J_1(h;\hat{v}^1,\hat{v}^2)\cdot(w^1,w^1)= \int_0^T\int_{\omega_{1}}\rho_{*}^2\eta_1^{1} w^1\ dxdt+\mu
 	_1\int_0^T\int_{\omega_{1}}|w^1|^2\ dxdt.
 	\end{equation}
 	
 	Let us show that, for some constant $C>0$ independent of $h,\ \eta^1,\ \phi,\ w^1$, one has
 	\begin{equation}\label{a}
 	\left|\int_0^T\int_{\omega_{1}}\rho_{*}^2\eta_1^{1} w^1\ dxdt\right|\leq C\|w^1\|^2_{L^2(\omega_{1,T})}.
 	\end{equation}
 	The energy estimates associated to systems \eqref{ha} and \eqref{eta} are given respectively by
 	\begin{equation}\label{esta}
 	\| \phi_1\|^2_{L^2(Q)}+\| \phi_2\|^2_{L^2(Q)} \leq C\|w^1\|^2_{L^2(\omega_{1,T})}
 	\end{equation}
 and 
 \begin{equation}\label{etaest}
 	\| \eta_1^1\|^2_{L^2(Q)}+\| \eta_2^2\|^2_{L^2(Q)} \leq C\|w^1\|^2_{L^2(\omega_{1,T})}.
 \end{equation}
 	
 	As $(\hat{v}^1, \hat{v}^2)$ is a Nash quasi-equilibrium, $\hat{v}^i\in L^2((0,T);\omega_i)$ and $y_d^i\in L^\infty((0,T); \omega_{1,d})\times L^\infty((0,T); \omega_{2,d})$, then, $p^1=(p_1^1,p_2^1)$ solution of \eqref{ps} belongs to $[L^2((0,T);H^1_{a}(\Omega))]^2$ and using energy estimates, we obtain 
 	\begin{equation}\label{estb}
 	\| p_1^1\|^2_{L^2(Q)}+\| p_2^1\|^2_{L^2(Q)} \leq C,
 	\end{equation} 
 	where $C$ is a positive constant which is independent of $\mu_1$ and $\mu_2$.
 	In addition with the change of variables $t\mapsto T-t$ and applying Lemma \ref{sauveur} with $c=F'_1(y_1)\in L^\infty(Q)$ and $g=\alpha_1(y_1-y^1_{1,d})\chi_{\omega_{1,d}}-dp^1_2\in L^2(Q)$ and combining the result with \eqref{estb}, we deduce that
  \begin{equation}\label{inf}
  	\|p_1^1\|_{L^\infty(Q)}\leq C,
\end{equation} 
 	with $C$ a positive constant independent of $\mu_1$ and $\mu_2$.
 	Using systems \eqref{eta}-\eqref{ha} and the estimates \eqref{esta}-\eqref{inf}, we obtain 
 	\begin{equation}\label{fa1}
 	\begin{array}{llll}
 	\dis \int_{\omega_{1,T}}\rho_{*}^2\eta_1^1 w^1\ dxdt
 	&=&\dis \int_Q\rho_{*}^2\left(\alpha_1|\phi_1|^2\chi_{\omega_d}-F_1^{\prime\prime}(y_1)|\phi_1|^2 p_1^1-d\eta_2^1\phi_1\right)\ dxdt\\
 	&\leq&\dis \alpha_1\|\phi_1\|^2_{L^2(Q)}+ \|F_1^{\prime\prime}\|_{\infty}\|\phi_1\|^2_{L^{2}(Q)} \|p_1^1\|_{L^{\infty}(Q)}+ \|d\|_{\infty}\|\eta_2^1\|_{L^{2}(Q)} \|\phi_1\|_{L^{2}(Q)}\\
 	&\leq& C\|w^1\|^2_{L^2(\omega_{1,T})},
 	\end{array}
 	\end{equation}
 	where $C$ is independent of $\mu_1$ and $\mu_2$.
 	
 	We can use \eqref{a} in \eqref{dj3} and we obtain 
 	\begin{equation*}
 	\dis D_1^2J_1(h;\hat{v}^1,\hat{v}^2)\cdot(w^1,w^1)\geq\left(\mu_1-C\right) \int_{\omega_{1,T}}|w^1|^2\ dxdt,\ \forall w^1\in L^2(\omega_{1,T}).
 	\end{equation*}
 	In a similar way, we can prove that there exists a positive constant $C$ independent of $\mu_1$ and $\mu_2$ such that
 	\begin{equation*}
 	\dis D_2^2J_2(h;\hat{v}^1,\hat{v}^2)\cdot(w^2,w^2)\geq\left(\mu_2-C\right) \int_{\omega_{2,T}}|w^2|^2\ dxdt,\ \forall w^2\in L^2(\omega_{2,T}).
 	\end{equation*}
 	Taking $\mu_i$ sufficiently large, then the functional $J_i,\ i=1,2$ given by \eqref{all16} is convex and therefore the pair $(\hat{v}^1,\hat{v}^2)$ is a Nash equilibrium in the sense of \eqref{Nash1}. \hfill $\blacksquare$

\section{Carleman estimates}\label{Carleman}

In this section we establish an observability inequality that allows us to prove the null controllability of system \eqref{yop}-\eqref{pop}. We start by proving an observability inequality for the adjoint systems associated to the linearized version of \eqref{yop}-\eqref{pop} 

\begin{equation}\label{rho}
	\left\{
	\begin{array}{rllll}
		\dis -\rho_{1,t}-\left(a(x)\rho_{1,x}\right)_{x}+b_1\rho_1+d\rho_2  &=&\dis \alpha_1\psi_1^1\chi_{\omega_{1,d}}+\alpha_2\psi_1^2\chi_{\omega_{2,d}}& \mbox{in}& Q,\\
		\dis -\rho_{2,t}-\left(a(x)\rho_{2,x}\right)_{x}+b_2\rho_2  &=&\dis \alpha_1\psi_2^1\chi_{\omega_{1,d}}+\alpha_2\psi_2^2\chi_{\omega_{2,d}}& \mbox{in}& Q,\\
		\dis \rho_1(t,0)=\rho_1(t,1)=\rho_2(t,0)=\rho_2(t,1)&=&0& \mbox{on}& (0,T), \\
		\dis \rho_1(T,\cdot)=\rho_1^T,\ \ \rho_2(T,\cdot)=\rho_2^T&& &\mbox{in}&\Omega
	\end{array}
	\right.
\end{equation}
and 
\begin{equation}\label{psi}
	\left\{
	\begin{array}{rllll}
		\dis \psi_{1,t}^i-\left(a(x)\psi^i_{1,x}\right)_{x}+c_1\psi_1^i   &=&\dis-\frac{1}{\mu_i}\rho_{*}^{-2}\rho_1\chi_{\omega_{i}}& \mbox{in}& Q,\\
		\dis \psi_{2,t}^i-\left(a(x)\psi^i_{2,x}\right)_{x}+c_2\psi_2^i+d\psi_1^i   &=&0& \mbox{in}& Q,\\
		\dis \psi_1^i(t,0)=\psi_1^i(t,1)=\psi_2^i(t,0)=\psi_2^i(t,1)&=&0& \mbox{on}& (0,T), \\
		\dis \psi_1^i(0,\cdot)= \psi_2^i(0,\cdot)&=&0 &\mbox{in}&\Omega,
	\end{array}
	\right.
\end{equation}
 where $b_1,b_2,c_1,c_2,d\in L^\infty(Q)$ and $\rho^T=(\rho_1^T, \rho_2^T)\in [L^2(\Omega)]^2$.

Since the first assumption of \eqref{od} holds and if we set $\varrho_j=\alpha_1\psi_j^1+\alpha_2\psi_j^2,\ j=1,2$, then we obtain
\begin{equation}\label{rhobis}
	\left\{
	\begin{array}{rllll}
		\dis -\rho_{1,t}-\left(a(x)\rho_{1,x}\right)_{x}+b_1\rho_1+d\rho_2  &=&\dis \varrho_1\chi_{\omega_{d}}& \mbox{in}& Q,\\
		\dis -\rho_{2,t}-\left(a(x)\rho_{2,x}\right)_{x}+b_2\rho_2  &=&\dis \varrho_2\chi_{\omega_{d}}& \mbox{in}& Q,\\
		\dis \rho_1(t,0)=\rho_1(t,1)=\rho_2(t,0)=\rho_2(t,1)&=&0& \mbox{on}& (0,T), \\
		\dis \rho_1(T,\cdot)=\rho_1^T,\ \ \rho_2(T,\cdot)=\rho_2^T&& &\mbox{in}&\Omega
	\end{array}
	\right.
\end{equation}
and 
\begin{equation}\label{varrho}
	\left\{
	\begin{array}{rllll}
		\dis \varrho_{1,t}-\left(a(x)\varrho_{1,x}\right)_{x}+c_1\varrho_1   &=&\dis-\rho_{*}^{-2}\left(\frac{\alpha_1}{\mu_1}\chi_{\omega_{1}}+\frac{\alpha_2}{\mu_2}\chi_{\omega_{2}}\right)\rho_1& \mbox{in}& Q,\\
		\dis \varrho_{2,t}-\left(a(x)\varrho_{2,x}\right)_{x}+c_2\varrho_2+d\varrho_1   &=&0& \mbox{in}& Q,\\
		\dis \varrho_1(t,0)=\varrho_1(t,1)=\varrho_2(t,0)=\varrho_2(t,1)&=&0& \mbox{on}& (0,T), \\
		\dis \varrho_1(0,\cdot)= \varrho_2(0,\cdot)&=&0 &\mbox{in}&\Omega.
	\end{array}
	\right.
\end{equation}

Classically, to establish Carleman inequality, we define some weight functions according to the nature of model. In our case, these functions are stated as follows:\\
Since $\omega_d\cap \omega\neq \emptyset$ (recall \eqref{od}), then, there exists a non-empty open set $\O_1\Subset \omega_d\cap \omega$ and $\sigma\in \mathcal{C}^2([0,1])$ be a function such that
\begin{equation}\label{sigma}
\left\{
\begin{array}{llll}
 \sigma(x)>0\quad\text{in}\quad (0,1),\quad \sigma(0)=\sigma(1)=0,\\
\sigma_x(x)\neq 0\quad \text{in}\quad [0,1]\setminus\O_0,
\end{array}
\right.
\end{equation}
where $\O_0\Subset\O_1$ is an open subset. We refer to \cite{FursikovImanuvilov}  for the existence of such a function $\sigma$. \\
Let  $\tau\in [0,1)$ be as in the assumption \eqref{k} and  $r, \bar{d}\in \R$ be such that 
\begin{equation}\label{condrd}
r\geq \frac{4ln(2)}{\|\sigma\|_{\infty}} \hbox{ and } \bar{d}\geq \frac{5}{a(1)(2-\tau)}.
\end{equation}
If $r$ and $d$ verify \eqref{condrd}, then the interval $\dis I= \left[\frac{a(1)(2-\tau)(e^{2r\|\sigma\|_{\infty}}-1)}{\bar{d}\ a(1)(2-\tau)-1},
\frac{4(e^{2r\|\sigma\|_{\infty}}-e^{r\|\sigma\|_{\infty}})}{3\bar{d}}\right]$
is non-empty (see  \cite{birba2016}). Then we can choose $\lambda$ in this interval and for $r,\ \bar{d}$ satisfying \eqref{condrd}, we define the following functions:
\begin{equation}\label{functcarl}
\left\{
\begin{array}{llll}
\dis \Theta(t)=\frac{1}{(t(T-t))^4},\quad \forall t\in (0,T),\ \ \ \delta(x):=\dis \lambda\left(\int_{0}^{x}\frac{y}{a(y)}\ dy-\bar{d}\right),\\
\\
\dis\varphi(t,x):=\Theta(t)\delta(x),\quad \eta(t,x):=\Theta(t)e^{r\sigma(x)},\\
\\
\Psi(x)=\left(e^{r\sigma(x)}-e^{2r\|\sigma\|_\infty}\right),\ \ \ \Phi(t,x):=\Theta(t)\Psi(x).\\
\end{array}
\right.
\end{equation}
Using the second assumption in \eqref{condrd} on $\bar{d}$, we observe that $\delta(x)<0$ for all $x\in [0,1]$. Moreover, we have that $\Theta(t)\to +\infty$ as $t$ tends to $0^+$ and $T^-$.  Under assumptions \eqref{condrd} and the choice of the parameter $\lambda$, the weight functions $\varphi$ and $\Phi$ defined by \eqref{functcarl} satisfy the following inequalities which are needed in the sequel:
	
	\begin{equation}\label{ineqphi}
		\left\{
		\begin{array}{llll}
			\dis \frac{4}{3}\Phi\leq \varphi\leq\Phi\ \ \mbox{on}\ Q,\\
			\dis 2\Phi\leq \varphi\ \ \mbox{on}\ Q.
		\end{array}
		\right.
	\end{equation}	

\begin{remark}\label{r}$ $
	
	The weight function $\rho_{*}\in C^\infty([0,T])$ in \eqref{all16} is such that	
	\begin{equation}\label{wei}
	\rho_{*}(t)\geq e^{-s\varphi_{*}/2},	
	\end{equation}
 where $\dis \varphi_{*}=\min_{x\in \Omega}\varphi(t,x)$ and $\varphi$ is defined in \eqref{functcarl}. This  weight function will help us to prove a Carleman inequality for the solution of system \eqref{rho}-\eqref{psi}. We can refer to \cite{liliana2020, araruna2015anash, teresacorrig} for a similar use of weight function as \eqref{wei}.
\end{remark}

Before going further, we consider the following result inspired by \cite[Lemma 4.1]{maniar2011} useful for the rest of the paper.

\begin{leme}(Caccioppoli's inequality)\label{cacc}$ $
	
	Let $\O^\prime$ be a subset of $\O_1$ such that $\O^\prime\Subset\O_1$. Let $\rho=(\rho_1,\rho_2)$ and $\varrho=(\varrho_1,\varrho_2)$ be the solution of \eqref{rhobis} and \eqref{varrho} respectively. Then, there exists a positive constant $C$ such that
	\begin{equation}\label{caccio}
		\int_{0}^{T}\int_{\O^\prime}\sum_{i=1}^{2}(\rho^2_{i,x}+\varrho_{i,x}^2)\ e^{2s\varphi}\,\dq \leq C\int_0^T\int_{\O_1}s^2\Theta^2\sum_{i=1}^{2}(\rho_i^2+\varrho_i^2)\ e^{2s\varphi}\ \dq,
	\end{equation}	
	where the weight functions $\varphi$ and $\Theta$ are defined by \eqref{functcarl}.
\end{leme}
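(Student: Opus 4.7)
The plan is to run the standard cut-off argument adapted to the coupled degenerate system, treating the four equations for $\rho_1$, $\rho_2$, $\varrho_1$, $\varrho_2$ one at a time. First I would pick a cut-off $\xi\in \mathcal{C}^\infty_c(\O_1)$ with $0\le \xi\le 1$ and $\xi\equiv 1$ on $\O^\prime$ (possible since $\O^\prime\Subset\O_1$), and observe that since $\O_1\Subset \omega_d\cap\omega \subset (0,1)$, the diffusion $a(\cdot)$ is bounded above and below by positive constants on $\overline{\O_1}$, so the spatial weight $a(x)\varphi_x^2 = a(x)\Theta(t)^2(\delta^\prime(x))^2$ is bounded by $C\Theta^2$ there (the degeneracy at $0$ never enters). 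Note also that, because $\omega_i\cap\omega=\emptyset$ and $\O_1\subset\omega$, the indicator $\chi_{\omega_i}$ vanishes on $\O_1$, so the source in the $\varrho_1$-equation is identically zero on the support of $\xi$; this will be a key simplification.

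Next, for the $\rho_1$-equation in \eqref{rhobis}, I would multiply by $\xi^2 e^{2s\varphi}\rho_1$ and integrate over $Q$. The time derivative becomes $-\frac{1}{2}\int_Q \xi^2 e^{2s\varphi}(\rho_1^2)_t\,\dq = \frac{1}{2}\int_Q(\xi^2 e^{2s\varphi})_t\rho_1^2\,\dq$ (the boundary terms at $t=0,T$ vanish because $\varphi(t,x)\to-\infty$ and hence $e^{2s\varphi}\to 0$ at both endpoints, using $\delta<0$). Integration by parts in $x$ yields
\begin{equation*}
\int_Q a\,\xi^2 e^{2s\varphi}\rho_{1,x}^2\,\dq = -\tfrac{1}{2}\int_Q (\xi^2 e^{2s\varphi})_t\rho_1^2\,\dq-\int_Q a\rho_{1,x}(\xi^2 e^{2s\varphi})_x\rho_1\,\dq-\int_Q(b_1\rho_1+d\rho_2)\xi^2 e^{2s\varphi}\rho_1\,\dq+\int_Q \varrho_1\chi_{\omega_d}\xi^2 e^{2s\varphi}\rho_1\,\dq.
\end{equation*}

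In the third paragraph I would estimate each term. Using $|\varphi_t|\le C\Theta^{5/4}$ gives $|(\xi^2 e^{2s\varphi})_t|\le Cs\Theta^2\xi^2 e^{2s\varphi}$, and expanding $(\xi^2 e^{2s\varphi})_x = 2\xi\xi_x e^{2s\varphi}+2s\xi^2\varphi_x e^{2s\varphi}$ together with Young's inequality $|a\rho_{1,x}(\xi^2 e^{2s\varphi})_x\rho_1|\le \tfrac{1}{2}a\xi^2 e^{2s\varphi}\rho_{1,x}^2 + C(\xi_x^2 + s^2\xi^2\Theta^2)e^{2s\varphi}\rho_1^2$ on $\O_1$ allows the gradient term to be absorbed on the left. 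The potential and source terms are handled by $|b_1|,|d|\le C$, $\chi_{\omega_d}\xi^2=\xi^2$ on $\O_1\subset\omega_d$, and the trivial Young bound $|\varrho_1\rho_1|\le \tfrac{1}{2}(\varrho_1^2+\rho_1^2)$. Since $a\ge a_0>0$ on $\overline{\O_1}$ and $\xi\equiv 1$ on $\O^\prime$, this delivers
\begin{equation*}
\int_0^T\!\!\int_{\O^\prime}\rho_{1,x}^2 e^{2s\varphi}\,\dq \le C\int_0^T\!\!\int_{\O_1}s^2\Theta^2(\rho_1^2+\rho_2^2+\varrho_1^2)e^{2s\varphi}\,\dq.
\end{equation*}

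Finally, repeating the same multiplier argument on the $\rho_2$, $\varrho_1$ and $\varrho_2$ equations produces analogous bounds; the $\varrho_1$-bound is even cleaner since its right-hand side vanishes on $\O_1$, and the $\varrho_2$-bound picks up a $\varrho_1^2$ term from the coupling $d\varrho_1$. Summing the four inequalities gives \eqref{caccio}. The only delicate point is keeping track of the degeneracy of $a$ when dealing with $\varphi_x$, but since the estimate is purely local in $\O_1$ (bounded away from $x=0$), this is immediate. The rest is standard bookkeeping with Young's inequality and the time-decay of $e^{2s\varphi}$ at the endpoints to kill boundary contributions.
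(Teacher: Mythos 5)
Your argument is the standard cut-off/multiplier proof of a Caccioppoli estimate (multiply each equation by $\xi^2 e^{2s\varphi}$ times the solution, integrate by parts, absorb the gradient cross-term by Young, and use that $a$, $a\varphi_x^2$ and $\varphi_t$ are controlled by $C$, $C\Theta^2$ and $C\Theta^{5/4}$ respectively on $\overline{\O_1}$, which is bounded away from the degeneracy at $x=0$), and it is correct; the observation that $\chi_{\omega_i}$ vanishes on $\O_1\subset\omega$ because $\omega_i\cap\omega=\emptyset$ is accurate, though not essential since $\rho_*^{-2}$ is bounded anyway. The paper itself gives no proof of this lemma, deferring to the cited reference, and the argument there is of exactly this type, so your proposal matches the intended route.
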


We state the following Carleman type inequality in the degenerate case proved in \cite[Proposition 3.3]{birba2016}.

\begin{prop}\label{prp1}$ $
	
	Consider the following system with $f\in L^2(Q)$, $a_0\in L^\infty(Q)$ and $z_T\in L^2(\Omega)$,
	
	\begin{equation}\label{defz1}
		\left\{
		\begin{array}{rllll}
			\dis -z_t-(a(x)z_x)_x+a_0z&=&f &\mbox{in}&Q,\\
			z(t,0)=z(t,1)&=&0  &\mbox{on}& (0,T),\\
			z(T,\cdot) &=&z_T  &\mbox{in}&  \Omega.
		\end{array}
		\right.
	\end{equation}
	Then, there exist two positive constants $C$ and $s_0$, such that every solution of \eqref{defz1} satisfies, for all $s\geq s_0$, the following inequality:
	
	\begin{eqnarray}\label{ineqcarl1}
		\dis\mathcal{I}(z)\leq C\left(\int_{Q}|f|^2e^{2s\varphi}\,\dq
		\dis +sa(1)\int_{0}^{T}\Theta z_x^2(t,1)e^{2s\varphi(t,1)}\,dt\right),
	\end{eqnarray}	
	where 
	\begin{equation}\label{I}
		\mathcal{I}(z)=\int_{Q_T}\frac{1}{s\Theta}\left(\left|z_t\right|^2+|(a(x)z_x)_x|^2\right)e^{2s\varphi}\  dxdt+\int_{Q}\left(s^3\Theta^3\frac{x^2}{a(x)}z^2+s\Theta a(x)z_x^2\right)e^{2s\varphi}\, \dq,	
	\end{equation}
	the functions $\Theta$ and $\varphi$ are given by \eqref{functcarl}.
\end{prop}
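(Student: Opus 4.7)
The plan is to follow the classical conjugation-and-split method for Carleman inequalities, carefully adapted to the degeneracy at $x=0$. First I would set $w(t,x):=e^{s\varphi(t,x)}z(t,x)$ and compute the conjugated operator
\[
P_s w := e^{s\varphi}\bigl[-\partial_t - \partial_x(a(x)\partial_x)\bigr](e^{-s\varphi}w),
\]
so that the equation for $z$ becomes $P_s w - a_0 w = e^{s\varphi}f - s(\varphi_t)w + \ldots$ Next I would split $P_s = P_s^{+} + P_s^{-}$ into its formally self-adjoint and skew-adjoint parts in $L^2(Q)$, where
\[
P_s^{+}w = -(a(x)w_x)_x + s^2 a(x)\varphi_x^2\, w - s\varphi_t w,\qquad P_s^{-}w = w_t - 2s a(x)\varphi_x w_x - s(a(x)\varphi_x)_x w,
\]
up to a convenient grouping of lower-order terms. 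Then I would expand
\[
\|P_s^{+}w\|_{L^2(Q)}^2 + \|P_s^{-}w\|_{L^2(Q)}^2 + 2\langle P_s^{+}w, P_s^{-}w\rangle_{L^2(Q)} = \|e^{s\varphi}f - a_0 w - R\|_{L^2(Q)}^2,
\]
with $R$ the collected zeroth-order contributions.

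The heart of the argument is the cross term $\langle P_s^{+}w, P_s^{-}w\rangle$: integrating by parts in both $t$ and $x$ yields a distributed positive contribution together with boundary terms at $t\in\{0,T\}$ and $x\in\{0,1\}$. The time boundary terms vanish because $\Theta(t)\to +\infty$ as $t\to 0^+$ and $t\to T^-$, forcing $w$ and $w_x$ to vanish there. For the specific choice $\delta(x) = \lambda\bigl(\int_0^x \tfrac{y}{a(y)}\,dy - \bar d\bigr)$ one has $a(x)\varphi_x(t,x) = \lambda\Theta(t) x$, which together with the structural condition $xa'(x)\leq \tau a(x)$ guarantees that the $x=0$ boundary contributions either vanish or have favourable sign; at $x=1$ they produce exactly the observation term $sa(1)\int_0^T \Theta\, z_x^2(t,1)e^{2s\varphi(t,1)}\,dt$ appearing on the right-hand side of \eqref{ineqcarl1}.

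The distributed positive contribution, after using the explicit form of $\varphi$ and the inequalities in \eqref{ineqphi}, dominates (for $s$ large) the two main terms
\[
\int_{Q}\Bigl(s^3\Theta^3 \tfrac{x^2}{a(x)} w^2 + s\Theta\, a(x) w_x^2\Bigr)e^{2s\varphi}\,\dq,
\]
after invoking the Hardy–Poincar\'e inequality \eqref{hardy} to control any leftover $\tfrac{a(x)}{x^2}w^2$ terms by $a(x)w_x^2$. To recover the second-order contribution $\tfrac{1}{s\Theta}(|w_t|^2 + |(a(x)w_x)_x|^2)$ on the left, I would use the identities $P_s^{\pm}w = (\text{explicit})$ to estimate $(a(x)w_x)_x$ and $w_t$ by $\|P_s^{\pm}w\|_{L^2}$ and the dominant weighted terms, which is possible because each of those second-order quantities appears explicitly in $P_s^{+}w$ or $P_s^{-}w$. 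Finally I would undo the conjugation $z = e^{-s\varphi}w$, absorb the perturbation $\|a_0 w\|_{L^2}^2$ into the $s^3\Theta^3$ term using $a_0\in L^\infty(Q)$ and $s\geq s_0$ large, thereby obtaining \eqref{ineqcarl1} with $\mathcal{I}(z)$ as in \eqref{I}.

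The main obstacle is the analysis of the spatial boundary terms at $x=0$: the weight $\varphi_x$ behaves like $x/a(x)$, which for $a(x)=x^\alpha$ blows up as $x\to 0^+$, and one must show that traces such as $a(x)\varphi_x w w_x$ and $s^2 a(x)\varphi_x^3 w^2$ vanish in the limit. This requires a careful use of the weighted Sobolev regularity $z\in L^2((0,T);H^2_a(\Omega))\cap H^1((0,T);L^2(\Omega))$ from Theorem \ref{theoint}, together with the condition $\tau\in[0,1)$ in \eqref{k}, which precisely guarantees the integrability needed for the degenerate traces to be controlled. Once this is established, the rest of the argument is a bookkeeping exercise of absorbing lower-order contributions for $s\geq s_0$.
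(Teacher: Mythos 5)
You should first be aware that the paper does not prove Proposition \ref{prp1} at all: it is imported, with attribution, from \cite[Proposition 3.3]{birba2016}, which in turn rests on the degenerate Carleman machinery of \cite{alabau2006}. So there is no in-paper proof to compare against; the relevant comparison is with the strategy of the cited sources, and your sketch follows exactly that strategy: conjugate by $e^{s\varphi}$, split the conjugated operator into symmetric and skew parts, expand the sum of squares, extract the distributed positive terms and the boundary term at $x=1$ from the cross product, and recover the second-order terms in $\mathcal{I}(z)$ afterwards from the explicit expressions of $P_s^{\pm}w$. Your identification of the two genuinely degenerate difficulties --- the behaviour of $a(x)\varphi_x=\lambda\Theta(t)x$ and of the trace terms near $x=0$ --- is also the right place to focus, and the role of the hypothesis $\tau\in[0,1)$ is correctly flagged.

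That said, three points would need attention before this counts as a proof rather than a plan. First, the signs in your decomposition are not the ones the computation actually produces for the backward operator $-\partial_t-\partial_x(a\,\partial_x)$: one finds $P_s^{+}w=-(a w_x)_x-s^2a\varphi_x^2w+s\varphi_t w$ and $P_s^{-}w=-w_t+2sa\varphi_x w_x+s(a\varphi_x)_xw$, and the hedge ``up to a convenient grouping'' does not cover a sign flip on the multiplication terms $s^2a\varphi_x^2w$ and $s\varphi_t w$, which changes the sign of several cross-term contributions. Second, the positivity of the distributed gradient term comes from the pointwise inequality $2a(x)-xa'(x)\geq(2-\tau)a(x)>0$, i.e. from $xa'\leq\tau a$ with $\tau<2$; this is where \eqref{k} really enters (not only in the boundary analysis), and it has to be written out, together with the choice of $\lambda$ in the interval $I$ that makes the zeroth-order distributed term positive. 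Third, when you invoke the Hardy--Poincar\'e inequality \eqref{hardy} you must verify its hypothesis that $x\mapsto a(x)/x^{\theta}$ is non-increasing near $0$; this follows from \eqref{k} with $\theta=\tau$, but it is not automatic, and the estimate must moreover first be established for regular data $z_T\in H^1_a(\Omega)$ (so that Theorem \ref{theoint} applies and the $x=0$ traces make sense) and then extended by density. None of these is a conceptual obstruction --- the approach is the correct one and is precisely how \cite{birba2016} establishes the statement --- but as written the argument is an outline whose hardest computations are deferred.
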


The next result is concerned with a classical Carleman estimate in suitable interval $(b_1,b_2)\subset [0,1]$ proved in \cite{FursikovImanuvilov}.

\begin{prop}\label{propcarl2}$ $
	
	We consider the following system with  $f\in L^2(Q_b)$, $a_0\in L^\infty(Q_b)$ and $a\in C^1([b_1;b_2])$ is a strictly positive function,
	\begin{equation}\label{carl4}
		\left\{
		\begin{array}{rllll}
			\dis -z_t-(a(x)z_x)_x+a_0z &=&f &\mbox{in}&Q_b,\\
			z(t,b_1)=z(t,b_2)&=&0  &\mbox{on}& (0,T),\\
		\end{array}
		\right.
	\end{equation}
	where $Q_b:=(0,T)\times (b_1,b_2)$.
	Then, there exist two positive constants $C$ and $s_2$, such that every solution of \eqref{carl4} satisfies, for all $s\geq s_1$, the following inequality holds
	
	\begin{eqnarray}\label{ineqcarl2}
		\mathcal{K}(z) \leq C\left(\int_{Q_b}|f|^2e^{2s\Phi}\,\dq
		+\int_{0}^{T}\int_{\O_1}s^3\eta^3z^2e^{2s\Phi}\, \dq\right),
	\end{eqnarray}
	where
	\begin{equation}\label{kc}
		\mathcal{K}(z)=\int_{Q_T}\frac{1}{s\eta}\left(\left|z_t\right|^2+|(a(x)z_x)_x|^2\right)e^{2s\Phi}\  dxdt+\int_{Q}(s^3\eta^3z^2+s\eta z_x^2)e^{2s\Phi}\,\dq
	\end{equation}
	and the functions $\eta$ and $\Phi$ are defined by \eqref{functcarl}.
\end{prop}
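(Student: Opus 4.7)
The plan is to follow the classical Fursikov--Imanuvilov strategy for parabolic Carleman estimates, specialized to the one-dimensional variable-coefficient but uniformly elliptic setting on $[b_1,b_2]$; none of the degenerate-weight analysis underlying Proposition \ref{prp1} is needed here, since $a$ is strictly positive on this interval. First, I would perform the conjugation $w(t,x):=e^{s\Phi(t,x)}z(t,x)$. Because $\Theta(t)\to+\infty$ as $t\to 0^+$ or $t\to T^-$, the factor $e^{s\Phi}$ forces $w$ and all its derivatives to vanish at $t=0,T$; together with the Dirichlet condition in \eqref{carl4}, this gives $w\equiv 0$ on the parabolic boundary of $Q_b$. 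Substituting $z=e^{-s\Phi}w$ into \eqref{carl4} yields $L_1 w+L_2 w=f_s$ with $f_s:=e^{s\Phi}(f-a_0 z)$, where
\begin{equation*}
L_1 w=-(a(x)w_x)_x-s^2 a(x)\Phi_x^2\, w+s\Phi_t w,\qquad L_2 w=-w_t-2s\,a(x)\Phi_x w_x-s\bigl(a(x)\Phi_x\bigr)_x w
\end{equation*}
gather, respectively, the formally self-adjoint and the formally antisymmetric parts of the conjugated operator.

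The heart of the proof is the identity $\|L_1w\|^2+\|L_2w\|^2+2(L_1w,L_2w)=\|f_s\|^2$ in $L^2(Q_b)$. Expanding the cross-product $(L_1w,L_2w)$ via repeated integration by parts in $t$ and $x$, every boundary contribution vanishes thanks to the Dirichlet data on $w$ and the blow-up of $\Theta$. Using $\Phi=\Theta\Psi$ together with $\Psi_x=r\sigma_x e^{r\sigma}$ and $\Psi_{xx}=(r^2\sigma_x^2+r\sigma_{xx})e^{r\sigma}$, the two dominant positive densities that survive are proportional, after reverting to the variable $z$ via $w=e^{s\Phi}z$, to $s^3\eta^3 z^2 e^{2s\Phi}$ and $s\eta\, a(x)\, z_x^2 e^{2s\Phi}$; every remaining term carries strictly fewer powers of $s$, $\Theta$ or $e^{r\sigma}$. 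Because $\sigma_x\neq 0$ on $[b_1,b_2]\setminus\O_0$, these leading densities are uniformly positive there and so absorb all lower-order contributions, including the term generated by $a_0 z$, once $s\geq s_1$ is taken large enough.

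To handle the exceptional region $\O_0\Subset\O_1$ where $\sigma_x$ may vanish, I would introduce a cutoff $\chi\in C^\infty_c(\O_1)$ with $\chi\equiv 1$ on $\O_0$ and run the standard localization argument: multiplying the equation for $z$ by $\chi\, s^3\eta^3 z\, e^{2s\Phi}$, integrating by parts and applying the Cauchy--Schwarz inequality produces exactly the local remainder $C\int_0^T\!\!\int_{\O_1} s^3\eta^3 z^2 e^{2s\Phi}\,dx\,dt$ appearing on the right-hand side of \eqref{ineqcarl2}. The remaining contributions $\frac{1}{s\eta}\bigl(|z_t|^2+|(a z_x)_x|^2\bigr)e^{2s\Phi}$ of $\mathcal{K}(z)$ are then recovered by combining the already-obtained bound on $\|L_1 w\|^2+\|L_2 w\|^2$ with the PDE $z_t=-(a z_x)_x+a_0 z-f$. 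The main technical obstacle is the bookkeeping inside the cross-term expansion: one must track every integration-by-parts contribution, identify the two positive leading densities, and verify that every other term is strictly lower order so that it can be absorbed. The condition $r\geq 4\ln(2)/\|\sigma\|_\infty$ in \eqref{condrd} is precisely what ensures that the exponential factor $e^{r\sigma}$ built into $\Psi_x$ dominates the polynomial factors in the remainder, closing the absorption.
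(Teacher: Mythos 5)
The paper does not actually prove Proposition \ref{propcarl2}: it is quoted as a known classical Carleman estimate and attributed to Fursikov--Imanuvilov, so your proposal is being measured against the standard proof in that reference rather than against an argument in the paper. Your sketch is indeed that standard argument (conjugation $w=e^{s\Phi}z$, splitting into self-adjoint and skew-adjoint parts, expansion of the cross term, absorption away from the critical set of $\sigma$, and a localized multiplier argument near $\O_0$ producing the observation term on $\O_1$), and the overall strategy is correct. Two details are off, though neither is fatal. First, with $w=e^{s\Phi}z$ one has $(az_x)_x=e^{-s\Phi}\left[(aw_x)_x-2sa\Phi_xw_x-s(a\Phi_x)_xw+s^2a\Phi_x^2w\right]$, so the first-order terms in your $L_2$ should appear with the opposite sign; this matters once you actually expand $(L_1w,L_2w)$. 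Second, it is not true that ``every boundary contribution vanishes'': the temporal ones do (blow-up of $\Theta$) and so do the spatial ones carrying a factor of $w$, but the cross term leaves spatial boundary integrals of the form $\int_0^T s\,a^2\Phi_x\,w_x^2\,dt$ at $x=b_1,b_2$, which do not vanish under Dirichlet conditions. They are discarded because they have a favorable sign, which relies on $\sigma_x\neq 0$ outside $\O_0$ forcing $\partial_\nu\sigma<0$ at the endpoints of $[b_1,b_2]$; this sign condition should be stated, not bypassed. Finally, the constraint $r\geq 4\ln(2)/\|\sigma\|_\infty$ in \eqref{condrd} is imposed in this paper to secure the comparison \eqref{ineqphi} between $\Phi$ and the degenerate weight $\varphi$, not to close the absorption in the classical estimate, where one simply takes the parameter in the exponential large enough.
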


\subsection{An intermediate Carleman estimate}

Now, we state and prove one of the important result of this paper which is the intermediate Carleman estimate satisfied by solution of systems \eqref{rhobis}-\eqref{varrho}. This inequality is obtained by using the Carleman estimates \eqref{ineqcarl1} and \eqref{ineqcarl2}, the Hardy-Poincar\'e inequality \eqref{hardy} and the Caccioppoli's inequality \eqref{caccio}.

\begin{theorem}\label{thm1}$ $
	
	Assume that the hypotheses \eqref{k} on $a(\cdot)$ are satisfied. Then, there exists a  constant $C_1>0$ such that every solution $\rho$ and $\varrho$ of \eqref{rhobis} and \eqref{varrho} respectively, satisfy, for any $s$ large enough, the following inequality
	
	\begin{eqnarray}\label{ineqcarlprinc}
	\mathcal{I}(\rho_1)+\mathcal{I}(\rho_2)+\mathcal{I}(\varrho_1)+\mathcal{I}(\varrho_2) \leq C_1\int_{0}^{T}\int_{\O_1}s^3\Theta^3(\rho_1^2+\rho_2^2+\varrho_1^2+\varrho_2^2)e^{2s\Phi}\,\dq,
	\end{eqnarray}
  where the notation $\mathcal{I}(\cdot)$ is defined by \eqref{I}.
\end{theorem}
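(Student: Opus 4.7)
\textbf{Proof plan for Theorem \ref{thm1}.}

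The plan is to apply the two Carleman estimates of Propositions \ref{prp1} and \ref{propcarl2} to each of the four equations in \eqref{rhobis}--\eqref{varrho}, sum them up, and absorb the lower-order and coupling terms using Hardy--Poincar\'e \eqref{hardy} and Caccioppoli \eqref{caccio}. Since the $\varrho$--equations in \eqref{varrho} are forward in time, I would first perform the change of variable $t\mapsto T-t$ so that Proposition \ref{prp1} applies directly.

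First, applying Proposition \ref{prp1} separately to $\rho_1$, $\rho_2$, $\varrho_1$, $\varrho_2$ and summing yields
\begin{equation*}
\sum_{i=1}^{2}\bigl(\mathcal{I}(\rho_i)+\mathcal{I}(\varrho_i)\bigr)\leq C\int_Q\!\bigl(|\varrho_1|^2\chi_{\omega_d}+|\varrho_2|^2\chi_{\omega_d}+\rho_*^{-4}(\chi_{\omega_1}+\chi_{\omega_2})|\rho_1|^2\bigr)e^{2s\varphi}\,\dq+\text{(lower-order)}+\text{(BT)},
\end{equation*}
where the ``lower-order'' terms are of the form $\int_Q(|b_i|^2|\rho_i|^2+|c_i|^2|\varrho_i|^2+|d|^2(|\rho_2|^2+|\varrho_1|^2))e^{2s\varphi}\,\dq$ (coming from moving the zero- and first-order couplings to the right-hand side), and $\text{(BT)}$ stands for the boundary terms at $x=1$ of the form $sa(1)\int_0^T\Theta\,z_x^2(t,1)e^{2s\varphi(t,1)}\,dt$. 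Using $b_i,c_i,d\in L^\infty(Q)$ together with Hardy--Poincar\'e to control $\int z^2 e^{2s\varphi}$ by $\int\frac{a(x)}{x^2}z^2 e^{2s\varphi}$ near the degeneracy, and using the part $\int s^3\Theta^3\frac{x^2}{a(x)}z^2 e^{2s\varphi}$ of $\mathcal{I}(z)$, all the lower-order terms are absorbed into the left-hand side by taking $s$ large. The source term $\rho_*^{-4}(\chi_{\omega_1}+\chi_{\omega_2})|\rho_1|^2e^{2s\varphi}$ in $\varrho_i$ is absorbed using the crucial choice \eqref{wei} of $\rho_*$: since $\rho_*^{-2}\leq e^{s\varphi_*}$ with $\varphi_*=\min_x\varphi\leq\varphi$, the ratio $\rho_*^{-4}/(s^3\Theta^3)$ is small uniformly in $Q$ for $s$ sufficiently large, enabling absorption into $\mathcal{I}(\rho_1)$.

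Next, I handle the remaining observation terms in $\varrho_i$ on $\omega_d$ and the boundary terms at $x=1$. The $\int_Q|\varrho_i|^2\chi_{\omega_d}e^{2s\varphi}\,\dq$ pieces are localized to $\O_1\Subset\omega_d\cap\omega$ in a standard way: pick a cut-off $\xi\in C_c^\infty(\omega_d)$ with $\xi\equiv 1$ on a neighborhood of $\O_1$, multiply the $\varrho_i$--equation by $s^3\Theta^3\xi\varrho_i e^{2s\varphi}$, integrate by parts, and bound the resulting derivatives via Caccioppoli \eqref{caccio}, which trades the derivatives against a local $L^2$--norm on $\O_1$ with weight $s^2\Theta^2 e^{2s\varphi}$. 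To eliminate the boundary terms at $x=1$, I apply the classical (non-degenerate) Carleman estimate of Proposition \ref{propcarl2} on a subinterval $(b_1,b_2)\Subset(0,1]$ chosen so that $\O_0\Subset(b_1,b_2)$ and this subinterval contains a neighborhood of $x=1$; on this subinterval $a$ is uniformly positive so \eqref{ineqcarl2} applies. Combining \eqref{ineqcarl2} for a cut-off version of each $\rho_i,\varrho_i$ localized near $x=1$ with the identities $\frac{4}{3}\Phi\leq\varphi\leq\Phi$, the boundary term $sa(1)\int_0^T\Theta z_x^2(t,1)e^{2s\varphi(t,1)}\,dt$ is dominated by a local integral on $\O_1$ of the form $\int_0^T\!\int_{\O_1}s^3\Theta^3 z^2 e^{2s\Phi}\,\dq$, again using Caccioppoli to remove the derivative.

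Assembling everything and using $e^{2s\varphi}\leq e^{2s\Phi}$ (both weights being negative with $\varphi\leq\Phi$) yields the announced estimate \eqref{ineqcarlprinc}. I expect the main obstacle to be the simultaneous treatment of the boundary term at $x=1$ in the degenerate Carleman and the $\rho_*^{-2}$--weighted source in the $\varrho_i$--equation: the former demands a careful cut-off localization near $x=1$ plus Proposition \ref{propcarl2} and Caccioppoli, while the latter dictates the precise lower bound \eqref{wei} imposed on $\rho_*$ so that absorption works uniformly in $Q$. The coupling through $d$ between the cascade equations is comparatively mild since it only introduces zero-order perturbations that are absorbed by taking $s$ large, once Hardy--Poincar\'e handles the weight $\frac{x^2}{a(x)}$ near $x=0$.
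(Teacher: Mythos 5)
Your toolbox is the right one (Propositions \ref{prp1} and \ref{propcarl2}, Hardy--Poincar\'e, Caccioppoli, the comparison $e^{2s\varphi}\le e^{2s\Phi}$), and your treatment of the coupling terms, of the $\rho_*^{-2}$--weighted source via \eqref{wei}, and of the time reversal for \eqref{varrho} all agree in spirit with the paper. The genuine gap is in the order of operations around the boundary term at $x=1$. You propose to apply the degenerate Carleman estimate \eqref{ineqcarl1} globally to $\rho_i,\varrho_i$ and then to remove the resulting trace term $sa(1)\int_0^T\Theta\,z_x^2(t,1)e^{2s\varphi(t,1)}\,dt$ afterwards, ``using Caccioppoli to remove the derivative.'' But Caccioppoli's inequality \eqref{caccio} is an interior estimate on $\O^\prime\Subset\O_1$ (and $\O_1\Subset\omega_d\cap\omega$ need not be anywhere near $x=1$); it says nothing about the normal derivative at the endpoint $x=1$. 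Likewise Proposition \ref{propcarl2} yields only interior weighted norms $\mathcal{K}(z)$ and no trace bound at $b_2$. So the mechanism you invoke to eliminate the boundary term does not exist as stated, and you would need an additional (unstated and nontrivial) trace lemma to make your route work.

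The paper avoids this entirely by localizing \emph{before} applying the Carleman estimates: with a cut-off $\xi$ equal to $1$ on $[0,\alpha]$ and $0$ on $[\beta,1]$ it sets $\widetilde{\rho}_i=\xi\rho_i$, $\widetilde{\varrho}_i=\xi\varrho_i$ and $\overline{\rho}_i=(1-\xi)\rho_i$, $\overline{\varrho}_i=(1-\xi)\varrho_i$. The tilde functions vanish identically near $x=1$, so when Proposition \ref{prp1} is applied to them the boundary term is zero by construction; the bar functions are supported in $[\alpha,1]$ where $a$ is uniformly positive, so Proposition \ref{propcarl2} applies with an interior observation on $\O_1$ and never produces a boundary term. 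The commutator terms generated by the cut-off are supported in $\O^\prime$ and are exactly what Caccioppoli is designed to handle. The remaining terms ($d\widetilde{\rho}_2$, $\widetilde{\varrho}_1\chi_{\omega_d}$, etc.) are absorbed globally via Hardy--Poincar\'e applied to $e^{s\varphi}\widetilde{\varrho}_1$ --- note that this genuinely needs \eqref{hardy} to trade $\int\frac{a}{x^2}z^2e^{2s\varphi}$ for $\int a|z_x|^2e^{2s\varphi}$ plus $\int s^2\Theta^2\frac{x^2}{a}z^2e^{2s\varphi}$; your one-line version, bounding $\int z^2e^{2s\varphi}$ directly by the $\frac{x^2}{a}$--weighted part of $\mathcal{I}$, does not hold pointwise near the degeneracy. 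If you adopt the cut-off-first decomposition, the rest of your plan goes through essentially as in the paper.
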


\begin{proof}
Let us choose an arbitrary open subset $\O^\prime:=(\alpha,\beta)$ such that $\O^\prime\Subset\O_1$. Let us introduce the smooth cut-off function $\xi:\R\to\R$ defined as follows
	\begin{equation}\label{cutoff}
	\left\{
	\begin{array}{llll}
	\dis 0\leq \xi\leq 1, \quad x\in \R,\\
	\dis \xi(x)=1, \quad x\in [0,\alpha],\\
	\dis \xi(x)=0, \quad x\in [\beta,1].
	\end{array}
	\right.
	\end{equation}	
	Let $\rho=(\rho_1,\rho_2)^{t}$ and $\varrho=(\varrho_1,\varrho_2)^{t}$ be respectively solutions of \eqref{rhobis} and \eqref{varrho}. We set $\widetilde{\rho}_i=\xi \rho_i,\ i=1,2$ and $\widetilde{\varrho}_i=\xi \varrho_i,\ i=1,2$. Then, $\widetilde{\rho}$ and $\widetilde{\varrho}$  are respectively solutions to
\begin{equation}\label{rhobist}
	\left\{
	\begin{array}{rllll}
		\dis -\widetilde{\rho}_{1,t}-\left(a(x)\widetilde{\rho}_{1,x}\right)_{x}+b_1\widetilde{\rho}_1+d\widetilde{\rho}_2  &=&\dis \widetilde{\varrho}_1\chi_{\omega_d}-\left(a(x)\xi_x\ \rho_1\right)_{x}-\xi_x\  a(x)\rho_{1,x}& \mbox{in}& Q,\\
		\dis -\widetilde{\rho}_{2,t}-\left(a(x)\widetilde{\rho}_{2,x}\right)_{x}+b_2\widetilde{\rho}_2  &=&\dis \widetilde{\varrho}_2\chi_{\omega_d}-\left(a(x)\xi_x\ \rho_2\right)_{x}-\xi_x\  a(x)\rho_{2,x}& \mbox{in}& Q,\\
		\dis \widetilde{\rho}_1(t,0)=\widetilde{\rho}_1(t,1)=\widetilde{\rho}_2(t,0)=\widetilde{\rho}_2(t,1)&=&0& \mbox{on}& (0,T), \\
		\dis \widetilde{\rho}_1(T,\cdot)=\widetilde{\rho}_1^T,\ \ \widetilde{\rho}_2(T,\cdot)=\widetilde{\rho}_2^T&& &\mbox{in}&\Omega
	\end{array}
	\right.
\end{equation}
and 
\begin{equation}\label{varrhot}
	\left\{
	\begin{array}{rllll}
		\dis \widetilde{\varrho}_{1,t}-\left(a(x)\widetilde{\varrho}_{1,x}\right)_{x}+c_1\widetilde{\varrho}_1   &=&\dis\widetilde{\mathcal{G}}& \mbox{in}& Q,\\
		\dis \widetilde{\varrho}_{2,t}-\left(a(x)\widetilde{\varrho}_{2,x}\right)_{x}+c_2\widetilde{\varrho}_2+d\widetilde{\varrho}_1   &=&-\left(a(x)\xi_x\ \varrho_2\right)_{x}-\xi_x\  a(x)\varrho_{2,x}& \mbox{in}& Q,\\
		\dis \widetilde{\varrho}_1(t,0)=\widetilde{\varrho}_1(t,1)=\widetilde{\varrho}_2(t,0)=\widetilde{\varrho}_2(t,1)&=&0& \mbox{on}& (0,T), \\
		\dis \widetilde{\varrho}_1(0,\cdot)= \widetilde{\varrho}_2(0,\cdot)&=&0 &\mbox{in}&\Omega,
	\end{array}
	\right.
\end{equation}
where $\dis \widetilde{\mathcal{G}}=-\rho_{*}^{-2}\left(\frac{\alpha_1}{\mu_1}\chi_{\omega_{1}}+\frac{\alpha_2}{\mu_2}\chi_{\omega_{2}}\right)\widetilde{\rho}_1-\left(a(x)\xi_x\ \varrho_1\right)_{x}-\xi_x\  a(x)\varrho_{1,x}$.

Let us also set $\overline{\rho}_i=\vartheta\rho_i,\ i=1,2$ and $\overline{\varrho}_i= \vartheta\varrho_i,\ i=1,2$ with $\vartheta=1-\xi$. Then, the support of $\overline{\rho}_i$ and $\overline{\varrho}_i$ is contained in $[0,T]\times[\alpha,1]$ and are respectively solutions to
\begin{equation}\label{rhobisb}
	\left\{
	\begin{array}{rllll}
		\dis -\overline{\rho}_{1,t}-\left(a(x)\overline{\rho}_{1,x}\right)_{x}+b_1\overline{\rho}_1+d\overline{\rho}_2  &=&\dis \overline{\varrho}_1\chi_{\O_d}-\left(a(x)\vartheta_x\ \rho_1\right)_{x}-\vartheta_x\  a(x)\rho_{1,x}& \mbox{in}& Q_{\alpha},\\
		\dis -\overline{\rho}_{2,t}-\left(a(x)\overline{\rho}_{2,x}\right)_{x}+b_2\overline{\rho}_2  &=&\dis \overline{\varrho}_2\chi_{\O_d}-\left(a(x)\vartheta_x\ \rho_2\right)_{x}-\vartheta_x\  a(x)\rho_{2,x}& \mbox{in}& Q_{\alpha},\\
		\dis \overline{\rho}_1(t,0)=\overline{\rho}_1(t,1)=\overline{\rho}_2(t,0)=\overline{\rho}_2(t,1)&=&0& \mbox{on}& (0,T), \\
		\dis \overline{\rho}_1(T,\cdot)=\overline{\rho}_1^T,\ \ \overline{\rho}_2(T,\cdot)=\overline{\rho}_2^T&& &\mbox{in}&\Omega
	\end{array}
	\right.
\end{equation}
and 
\begin{equation}\label{varrhob}
	\left\{
	\begin{array}{rllll}
		\dis \overline{\varrho}_{1,t}-\left(a(x)\overline{\varrho}_{1,x}\right)_{x}+c_1\overline{\varrho}_1   &=&\dis\overline{\mathcal{G}}& \mbox{in}& Q_{\alpha},\\
		\dis \widetilde{\varrho}_{2,t}-\left(a(x)\widetilde{\varrho}_{2,x}\right)_{x}+c_2\widetilde{\varrho}_2+d\widetilde{\varrho}_1   &=&-\left(a(x)\vartheta_x\ \varrho_2\right)_{x}-\vartheta_x\  a(x)\varrho_{2,x}& \mbox{in}& Q_{\alpha},\\
		\dis \overline{\varrho}_1(t,0)=\overline{\varrho}_1(t,1)=\overline{\varrho}_2(t,0)=\overline{\varrho}_2(t,1)&=&0& \mbox{on}& (0,T), \\
		\dis \overline{\varrho}_1(0,\cdot)= \overline{\varrho}_2(0,\cdot)&=&0 &\mbox{in}&\Omega,
	\end{array}
	\right.
\end{equation}
where, $\dis \overline{\mathcal{G}}=-\rho_{*}^{-2}\left(\frac{\alpha_1}{\mu_1}\chi_{\omega_{1}}+\frac{\alpha_2}{\mu_2}\chi_{\omega_{2}}\right)\overline{\rho}_1-\left(a(x)\vartheta_x\ \varrho_1\right)_{x}-\vartheta_x\  a(x)\varrho_{1,x}$ and  $Q_{\alpha}=(0,T)\times(\alpha,1)$. \\
We make the rest of the proof in three steps:

\noindent \textbf{Step 1.}
We prove that there exists a constant $C>0$ such that 
	\begin{eqnarray}\label{i5}
	\dis \mathcal{I}(\widetilde{\rho}_1)+\mathcal{I}(\widetilde{\rho}_2)+\mathcal{I}(\widetilde{\varrho}_1)+\mathcal{I}(\widetilde{\varrho}_2) \leq C\int_0^T\int_{\O^\prime}s^2\Theta^2(\rho_1^2+\rho_2^2+\varrho_1^2+\varrho_2^2)e^{2s\varphi}\,\dq.
\end{eqnarray}

Applying the Carleman estimate \eqref{ineqcarl1} to the first equation of \eqref{rhobist} with the second term $f=\dis -d\widetilde{\rho}_2+ \widetilde{\varrho}_1\chi_{\O_d}-\left(a(x)\xi_x\ \rho_1\right)_{x}-\xi_x\  a(x)\rho_{1,x}$, and using the fact that  $\widetilde{\rho}_{1,x}(t,1)=0$, we obtain
	\begin{eqnarray}\label{ineqcarl3rho}
		\mathcal{I}(\widetilde{\rho}_1)
		&\leq& \dis C\int_{Q}\left|\dis -d\widetilde{\rho}_2+ \widetilde{\varrho}_1\chi_{\O_d}-\left(a(x)\xi_x\ \rho_1\right)_{x}-\xi_x\  a(x)\rho_{1,x}\right|^2e^{2s\varphi}\,\dq\\
		&\leq&C \dis \int_{Q}\left[|d\widetilde{\rho}_2|^2+|\widetilde{\varrho}_1\chi_{\O_d}|^2+|(a(x)\xi_x{\rho}_1)_x+a(x)\xi_x{\rho}_{1,x}|^2\right] e^{2s\varphi}\,\dq.\nonumber
	\end{eqnarray}
	From the definition of $\xi$, we have
	\begin{eqnarray}\label{ine}
	\int_{Q} |(a(x)\xi_x{\rho}_1)_x+a(x)\xi_x{\rho}_{1,x}|^2 e^{2s\varphi}\,\dq&=& \int_{Q} ((a(x)\xi_x)_x{\rho}_1+2a(x)\xi_x{\rho}_{1,x})^2 e^{2s\varphi}\,\dq\nonumber\\
	&\leq& \int_{Q}\left[2((a(x)\xi_x)_x)^2{\rho_1}^2+8(a(x)\xi_x)^2{\rho}_{1,x}^2\right] e^{2s\varphi}\,\dq\nonumber\\
	&\leq& C\int_0^T\int_{\O^\prime}(\rho_1^2+\rho_{1,x}^2)\ e^{2s\varphi}\,\dq.
	\end{eqnarray}
	In the other hand, using the fact that $\dis \frac{x^2}{a(x)}$ is non-decreasing on $(0,1]$, thanks to Hardy-Poincar\'e inequality \eqref{hardy} to the function $e^{s\varphi}\widetilde{\varrho}_1$ and using the definition of $\varphi$, we get
	\begin{eqnarray*}
		\int_{Q}|\widetilde{\varrho}_1|^2e^{2s\varphi}\, \dq
		&\leq& \frac{1}{a(1)}\int_{Q} \frac{a(x)}{x^2}\widetilde{\varrho}_1^2e^{2s\varphi}\, \dq\\
		&\leq& \frac{\overline{C}}{a(1)}\int_{Q}a(x)|(\widetilde{\varrho}_1\ e^{s\varphi})_x|^2\, \dq\\
		&\leq& C\left(\int_{Q}a(x)\widetilde{\varrho}_{1,x}^2e^{2s\varphi}\, \dq+\int_{Q}s^2\Theta^2\frac{x^2}{a(x)}\widetilde{\varrho}_1^2e^{2s\varphi}\ \dq\right).
	\end{eqnarray*}
	Using the fact that there exist a positive constant $M_1$ such that 
	\begin{equation}\label{theta}
		1\leq M_1\Theta\ \ \mbox{and}\ \ \Theta^2\leq M_1\Theta^3,	
	\end{equation}
	we obtain 
	\begin{eqnarray}\label{ine1}
	\int_{Q}|\widetilde{\varrho}_1|^2e^{2s\varphi}\, \dq\leq  C\left(\int_{Q}\Theta a(x)\widetilde{\varrho}_{1,x}^2e^{2s\varphi}\, \dq+\int_{Q}s^2\Theta^3\frac{x^2}{a(x)}\widetilde{\varrho}_1^2e^{2s\varphi}\ \dq\right).
	\end{eqnarray}
Proceeding as before, we obtain 
\begin{eqnarray}\label{ca}
	\int_{Q}|d\widetilde{\rho}_2|^2e^{2s\varphi}\, \dq\leq  C\left(\int_{Q}\Theta a(x)\widetilde{\rho}_{2,x}^2e^{2s\varphi}\, \dq+\int_{Q}s^2\Theta^3\frac{x^2}{a(x)}\widetilde{\rho}_2^2e^{2s\varphi}\ \dq\right).
\end{eqnarray}
	Combining \eqref{ineqcarl3rho}-\eqref{ca}, we obtain
	\begin{eqnarray}\label{i1}
	\mathcal{I}(\widetilde{\rho}_1)
	\leq  C\int_0^T\int_{\O^\prime}(\rho_1^2+\rho_{1,x}^2)e^{2s\varphi}\,\dq 
	+C\int_{Q}\left(\Theta a(x)\widetilde{\varrho}_{1,x}^2+s^2\Theta^3\frac{x^2}{a(x)}\widetilde{\varrho}_1^2\right)e^{2s\varphi}\ \dq\nonumber\\
	+C\int_{Q}\left(\Theta a(x)\widetilde{\rho}_{2,x}^2+s^2\Theta^3\frac{x^2}{a(x)}\widetilde{\rho}_2^2\right)e^{2s\varphi}\ \dq.
	\end{eqnarray}
	Arguing in the same way as in \eqref{i1} with $\widetilde{\rho}_2$, $\widetilde{\varrho}_1$ and $\widetilde{\varrho}_2$, we respectively obtain
	
	\begin{eqnarray}\label{i2}
		\mathcal{I}(\widetilde{\rho}_2)
		\leq  C\int_0^T\int_{\O^\prime}(\rho_2^2+\rho_{2,x}^2)e^{2s\varphi}\,\dq 
		+C\int_{Q}\left(\Theta a(x)\widetilde{\varrho}_{2,x}^2+s^2\Theta^3\frac{x^2}{a(x)}\widetilde{\varrho}_2^2\right)e^{2s\varphi}\ \dq,
	\end{eqnarray}

\begin{eqnarray}\label{i3}
	\mathcal{I}(\widetilde{\varrho}_1)
	\leq  C\int_0^T\int_{\O^\prime}(\varrho_1^2+\varrho_{1,x}^2)e^{2s\varphi}\,\dq 
	+C\int_{Q}\left(\Theta a(x)\widetilde{\rho}_{1,x}^2+s^2\Theta^3\frac{x^2}{a(x)}\widetilde{\rho}_1^2\right)e^{2s\varphi}\ \dq
\end{eqnarray}
and 
\begin{eqnarray}\label{i4}
	\mathcal{I}(\widetilde{\varrho}_2)
	\leq  C\int_0^T\int_{\O^\prime}(\varrho_2^2+\varrho_{2,x}^2)e^{2s\varphi}\,\dq 
	+C\int_{Q}\left(\Theta a(x)\widetilde{\varrho}_{1,x}^2+s^2\Theta^3\frac{x^2}{a(x)}\widetilde{\varrho}_1^2\right)e^{2s\varphi}\ \dq.
\end{eqnarray}
	
	Combining \eqref{i1}-\eqref{i4} and taking $s$ large enough, we obtain
		
	\begin{eqnarray*}
		\dis \mathcal{I}(\widetilde{\rho}_1)+\mathcal{I}(\widetilde{\rho}_2)+\mathcal{I}(\widetilde{\varrho}_1)+\mathcal{I}(\widetilde{\varrho}_2) \leq C\sum_{i=1}^{2}\int_0^T\int_{\O^\prime}(\rho_i^2+\varrho_i^2+\rho^2_{i,x}+\varrho^2_{i,x})e^{2s\varphi}\,\dq.
	\end{eqnarray*}
	Using Caccioppoli's inequality \eqref{caccio} in the left hand side of this latter estimate, we obtain \eqref{i5}.
	 \\
	\noindent \textbf{Step 2.}
	We prove that there exists a constant $C>0$ such that 
\begin{eqnarray}\label{cmd}
	\dis \mathcal{K}(\overline{\rho}_1)+\mathcal{K}(\overline{\rho}_2)+\mathcal{K}(\overline{\varrho}_1)+\mathcal{K}(\overline{\varrho}_2) \leq C\int_0^T\int_{\O_1}s^3\Theta^3(\rho_1^2+\rho_2^2+\varrho_1^2+\varrho_2^2)e^{2s\Phi}\,\dq,
\end{eqnarray}
 where the notation $\mathcal{K}(\cdot)$ is defined by \eqref{kc}.

Since on $Q_{\alpha}$ all the above systems are non degenerate, applying the classical Carleman inequality \eqref{ineqcarl2}
 to the first solution $\overline{\rho}_1$ of \eqref{rhobisb} with $b_1=\alpha$, $b_2=1$ and $f=\dis -d\overline{\rho}_2+ \overline{\varrho}_1\chi_{\O_d}-\left(a(x)\vartheta_x\ \rho_1\right)_{x}-\vartheta_x\  a(x)\rho_{1,x}$, we get
\begin{eqnarray}\label{cl}
	\mathcal{K}(\overline{\rho}_1)
	&\leq& C \dis \int_{Q}\left[|d\overline{\rho}_2|^2+|\overline{\varrho}_1\chi_{\omega_d}|^2+|(a(x)\vartheta_x{\rho}_1)_x+a(x)\vartheta_x{\rho}_{1,x}|^2\right] e^{2s\Phi}\,\dq+\int_0^T\int_{\O_1}s^3\eta^3\overline{\rho}_1^2e^{2s\Phi}\,\dq\nonumber\\
	&\leq& C \dis \int_{Q}\left[|d\overline{\rho}_2|^2+|\overline{\varrho}_1\chi_{\omega_d}|^2+|(a(x)\vartheta_x{\rho}_1)_x+a(x)\vartheta_x{\rho}_{1,x}|^2\right] e^{2s\Phi}\,\dq\\
	&+&\int_0^T\int_{\O_1}s^3\Theta^3\overline{\rho}_1^2e^{2s\Phi}\,\dq\nonumber
\end{eqnarray}
because $\dis\eta(t,x):=\Theta(t)e^{r\sigma(x)}\leq \Theta(t)e^{r\|\sigma(x)\|_{\infty}}$.
Using the definition of the function $\vartheta$, we have
\begin{eqnarray}\label{}
	\int_{Q} |(a(x)\vartheta_x{\rho}_1)_x+a(x)\vartheta_x{\rho}_{1,x}|^2 e^{2s\Phi}\,\dq&=& \int_{Q} ((a(x)\vartheta_x)_x{\rho}_1+2a(x)\vartheta_x{\rho}_{1,x})^2 e^{2s\Phi}\,\dq\nonumber\\
	&\leq& \int_{Q}\left[2((a(x)\vartheta_x)_x)^2{\rho_1}^2+8(a(x)\vartheta_x)^2{\rho}_{1,x}^2\right] e^{2s\Phi}\,\dq\nonumber\\
	&\leq& C\int_0^T\int_{\O^\prime}(\rho_1^2+\rho_{1,x}^2)\ e^{2s\Phi}\,\dq.
\end{eqnarray}
On the other hand, since $\dis \frac{x^2}{a(x)}$ is non-decreasing on $(0,1]$ and thanks to Hardy-Poincar\'e inequality \eqref{hardy} to the function $e^{s\Phi}\overline{\varrho}_1$, we get
\begin{eqnarray*}
	\int_{Q}|\overline{\varrho}_1|^2e^{2s\Phi}\, \dq
	&\leq& \frac{1}{a(1)}\int_{Q} \frac{a(x)}{x^2}\overline{\varrho}_1^2e^{2s\Phi}\, \dq\\
	&\leq& \frac{\overline{C}}{a(1)}\int_{Q}a(x)|(\overline{\varrho}_1\ e^{s\Phi})_x|^2\, \dq\\
&\leq& C\int_{Q}\left(a(x)\overline{\varrho}_{1,x}^2+a(x)s^2\eta^2\overline{\varrho}_1^2\right)e^{2s\Phi}\ \dq.
\end{eqnarray*}
Using \eqref{theta}, the fact that $a\in \mathcal{C}^1([\alpha;1])$ and $\eta^{-1}\in L^\infty(Q)$, we get
\begin{eqnarray}\label{i8}
\int_{Q}\overline{\varrho}_1^2e^{2s\Phi}\, \dq\leq  C\int_{Q}\left(\eta \overline{\varrho}_{1,x}^2+s^2\eta^3\overline{\varrho}_1^2\right)e^{2s\Phi}\ \dq.
\end{eqnarray}
Arguing as before, we obtain 
\begin{eqnarray}\label{i7}
	\int_{Q}|d\overline{\rho}_2|^2e^{2s\Phi}\, \dq\leq  C\int_{Q}\left(\eta \overline{\rho}_{2,x}^2+s^2\eta^3\overline{\rho}_2^2\right)e^{2s\Phi}\ \dq.
\end{eqnarray}

	Combining \eqref{cl}-\eqref{i7}, we obtain
	
	\begin{eqnarray}\label{cm}
	\mathcal{K}(\overline{\rho}_1)
	\leq  C\int_0^T\int_{\O^\prime}(\rho_1^2+\rho_{1,x}^2)e^{2s\Phi}\,\dq+C\int_0^T\int_{\O_1}s^3\Theta^3\rho_1^2 e^{2s\Phi}\,\dq\nonumber\\
	+C\int_{Q}\left(\eta \overline{\varrho}_{1,x}^2+s^2\eta^3\overline{\varrho}_1^2\right)e^{2s\Phi}\ \dq+C\int_{Q}\left(\eta \overline{\rho}_{2,x}^2+s^2\eta^3\overline{\rho}_2^2\right)e^{2s\Phi}\ \dq.
	\end{eqnarray}
	
Applying the same way as in \eqref{cm} to $\overline{\rho}_2$, $\overline{\varrho}_1$ and $\overline{\varrho}_2$, we respectively obtain
	
	\begin{eqnarray}\label{cma}
	\mathcal{K}(\overline{\rho}_2)
	\leq  C\int_0^T\int_{\O^\prime}(\rho_2^2+\rho_{2,x}^2)e^{2s\Phi}\,\dq+C\int_0^T\int_{\O_1}s^3\Theta^3\rho_2^2 e^{2s\Phi}\,\dq\nonumber\\
	+C\int_{Q}\left(\eta \overline{\varrho}_{2,x}^2+s^2\eta^3\overline{\varrho}_2^2\right)e^{2s\Phi}\ \dq,
\end{eqnarray}
	\begin{eqnarray}\label{cmb}
	\mathcal{K}(\overline{\varrho}_1)
	\leq  C\int_0^T\int_{\O^\prime}(\varrho_1^2+\varrho_{1,x}^2)e^{2s\Phi}\,\dq+C\int_0^T\int_{\O_1}s^3\Theta^3\varrho_1^2 e^{2s\Phi}\,\dq\nonumber\\
	+C\int_{Q}\left(\eta \overline{\rho}_{1,x}^2+s^2\eta^3\overline{\rho}_1^2\right)e^{2s\Phi}\ \dq
\end{eqnarray}
and 
	\begin{eqnarray}\label{cmc}
	\mathcal{K}(\overline{\varrho}_2)
	\leq  C\int_0^T\int_{\O^\prime}(\varrho_2^2+\varrho_{2,x}^2)e^{2s\Phi}\,\dq+C\int_0^T\int_{\O_1}s^3\Theta^3\varrho_2^2 e^{2s\Phi}\,\dq\nonumber\\
	+C\int_{Q}\left(\eta \overline{\varrho}_{1,x}^2+s^2\eta^3\overline{\varrho}_1^2\right)e^{2s\Phi}\ \dq.
\end{eqnarray}
	Combining \eqref{cm}-\eqref{cmc} and taking $s$ large enough, we obtain
	
	\begin{eqnarray*}
		\dis \mathcal{K}(\overline{\rho}_1)+\mathcal{K}(\overline{\rho}_2)+\mathcal{K}(\overline{\varrho}_1)+\mathcal{K}(\overline{\varrho}_2) \leq C\sum_{i=1}^{2}\int_0^T\int_{\O^\prime}(\rho_i^2+\varrho_i^2+\rho^2_{i,x}+\varrho^2_{i,x})e^{2s\Phi}\,\dq\nonumber\\
		+C\int_0^T\int_{\O_1}s^3\Theta^3(\rho_1^2+\rho_2^2+\varrho_1^2+\varrho_2^2)e^{2s\Phi}\,\dq.
	\end{eqnarray*}
	Combining this latter estimate with Caccioppoli's inequality \eqref{caccio}, we obtain we obtain \eqref{cmd}.
 \\
\noindent \textbf{Step 3.}
Now, we prove the inequality \eqref{ineqcarlprinc}.
	
Thanks to inequalities \eqref{ineqphi}, the fact that $a\in \mathcal{C}^1([\alpha,1])$ and the function $\dis \frac{x^2}{a(x)}$ is non-decreasing on $(0,1]$, one can prove the existence of a constant $C>0$  such that for all $(t,x)\in (0,T)\times [\alpha,1]$, we have
\begin{equation}\label{est}
\begin{array}{llll}
\dis e^{2s\varphi}\leq e^{2s\Phi},\ \ \ \, \frac{x^2}{a(x)}e^{2s\varphi}\leq Ce^{2s\Phi},\ \ \ \, a(x)e^{2s\varphi}\leq Ce^{2s\Phi}.
\end{array}
\end{equation}
Using \eqref{est}, the inequality \eqref{cmd} becomes
	
	\begin{eqnarray}\label{i17}
	\dis \mathcal{I}(\overline{\rho}_1)+\mathcal{I}(\overline{\rho}_2)+\mathcal{I}(\overline{\varrho}_1)+\mathcal{I}(\overline{\varrho}_2) \leq C\int_0^T\int_{\O_1}s^3\Theta^3(\rho_1^2+\rho_2^2+\varrho_1^2+\varrho_2^2)e^{2s\Phi}\,\dq.
\end{eqnarray}	
Combining \eqref{i5} and \eqref{i17} and using the first inequality of \eqref{est}, we obtain	
	\begin{eqnarray}\label{i18}
	\dis \mathcal{I}(\overline{\rho}_1+\widetilde{\rho}_1)+\mathcal{I}(\overline{\rho}_2+\widetilde{\rho}_2)+\mathcal{I}(\overline{\varrho}_1+\widetilde{\varrho}_1)+\mathcal{I}(\overline{\varrho}_2+\widetilde{\varrho}_2) \leq C\int_0^T\int_{\O_1}s^3\Theta^3(\rho_1^2+\rho_2^2+\varrho_1^2+\varrho_2^2)e^{2s\Phi}\,\dq.
\end{eqnarray}		
Using the fact that $\varrho_i=\widetilde{\varrho}_i+\overline{\varrho}_i,\ i=1,2$ and $\rho_i=\widetilde{\rho}_i+\overline{\rho}_i,\ i=1,2$, then we have 
\begin{equation}\label{pat}
	\begin{array}{rll}
	\dis	|\varrho_i|^2\leq 2\left(|\widetilde{\varrho}_i|^2+|\overline{\varrho}_i|^2\right),\ |\rho_i|^2\leq 2\left(|\widetilde{\rho}_i|^2+|\overline{\rho}_i|^2\right),\\
		\\
		 \dis |\varrho_{i,x}|^2\leq 2\left(|\widetilde{\varrho}_{i,x}|^2+|\overline{\varrho}_{i,x}|^2\right),\ |\rho_{i,x}|^2\leq 2\left(|\widetilde{\rho}_{i,x}|^2+|\overline{\rho}_{i,x}|^2\right). 
	\end{array}
\end{equation}
Combining \eqref{i18} and \eqref{pat}, we obtain the existence of a constant $C_1>0$ such that	
\begin{eqnarray*}
	\mathcal{I}(\rho_1)+\mathcal{I}(\rho_2)+\mathcal{I}(\varrho_1)+\mathcal{I}(\varrho_2) \leq C_1\int_{0}^{T}\int_{\O_1}s^3\Theta^3(\rho_1^2+\rho_2^2+\varrho_1^2+\varrho_2^2)e^{2s\Phi}\,\dq.
\end{eqnarray*}
This completes the proof.
\end{proof}

\subsection{An observability inequality result}

This part is devoted to the observability inequality of systems \eqref{rhobis}-\eqref{varrho}. This inequality is obtained by using the intermediate Carleman estimate \eqref{ineqcarlprinc}.

\begin{prop} \label{prop5}  $ $
	
	Under the assumptions of Theorem \ref{thm1}, there exists a constant $C_4>0$, such that every solution $\rho$ and $\varrho$ of \eqref{rhobis} and \eqref{varrho}, respectively, satisfy, for $s$ large enough, the following inequality:
		
	\begin{eqnarray}\label{obser}
		\mathcal{I}(\rho_1)+\mathcal{I}(\rho_2)+\mathcal{I}(\varrho_1)+\mathcal{I}(\varrho_2) \leq C_4s^{15}\int_{0}^{T}\int_{\omega}|\rho_1|^2\,\dq,
	\end{eqnarray}
	where the notation $\mathcal{I}(\cdot)$ is defined by \eqref{I}.
	
\end{prop}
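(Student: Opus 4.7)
The strategy is to refine the intermediate Carleman estimate \eqref{ineqcarlprinc} by successively eliminating from its right-hand side the local integrals involving $\rho_2$, $\varrho_1$, and $\varrho_2$, leaving only a local term in $\rho_1$. Since $\mathcal{O}_1 \Subset \omega_d \cap \omega \subset \omega$ and the Carleman weight $s^3 \Theta^3 e^{2s\Phi}$ admits a uniform bound on $\mathcal{O}_1$ that grows only polynomially in $s$, a local term in $\rho_1^2$ on $\mathcal{O}_1$ will then yield a bound by $s^{15}\int_0^T\int_\omega \rho_1^2\,dxdt$; the precise exponent $15$ emerges by tracking the polynomial factors accumulated during the absorption arguments described below.

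\textbf{Eliminating the local $\varrho$-terms.} The cascade system \eqref{varrho} has zero initial data and is driven solely by the source $-\rho_*^{-2}\bigl(\tfrac{\alpha_1}{\mu_1}\chi_{\omega_1}+\tfrac{\alpha_2}{\mu_2}\chi_{\omega_2}\bigr)\rho_1$. An energy estimate against the weighted test functions $s^3\Theta^3 e^{2s\Phi}\varrho_i$, together with the bound $\rho_*^{-2}\le e^{s\varphi_*}$ from \eqref{wei} (which is precisely why the weight $\rho_*$ was introduced in \eqref{all16}), allows me to estimate $\int_0^T\int_{\mathcal{O}_1} s^3\Theta^3(\varrho_1^2+\varrho_2^2)e^{2s\Phi}\,dxdt$ by a global weighted integral of $\rho_1^2$ that is dominated by the term $\int_Q s^3\Theta^3(x^2/a)\rho_1^2 e^{2s\varphi}\,dxdt$ already present in $\mathcal{I}(\rho_1)$ on the left-hand side of \eqref{ineqcarlprinc}. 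Choosing $s$ large enough absorbs this contribution into the LHS.

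\textbf{Eliminating the local $\rho_2$-term.} Shrinking $\mathcal{O}_1$ if necessary, I may assume $\mathcal{O}_1 \Subset \mathcal{O}_2 \cap \omega_d \cap \omega$ (possible because $\omega_d \cap \omega \neq \emptyset$ and $\mathcal{O}_2 \subset \omega$ can be taken to meet this intersection). Let $\zeta\in \mathcal{C}^\infty_c(\mathcal{O}_2)$ with $\zeta\equiv 1$ on $\mathcal{O}_1$. Reading the first equation of \eqref{rhobis} as
\begin{equation*}
d\rho_2 = \rho_{1,t} + (a(x)\rho_{1,x})_x - b_1\rho_1 + \varrho_1\chi_{\omega_d},
\end{equation*}
multiplying by $s^3\Theta^3 e^{2s\Phi}\zeta^2\rho_2$, and using $d\ge d_0 > 0$ on $\mathrm{supp}(\zeta)$, an integration by parts that transfers the time and space derivatives off $\rho_1$ and onto the cut-off and the Carleman weight yields, after Young's inequality,
\begin{equation*}
\int_0^T\!\!\int_{\mathcal{O}_1} s^3\Theta^3\rho_2^2 e^{2s\Phi}\,dxdt \le \varepsilon\bigl(\mathcal{I}(\rho_1)+\mathcal{I}(\rho_2)\bigr) + C_\varepsilon s^{15}\!\int_0^T\!\!\int_{\omega}\rho_1^2\,dxdt + C\!\int_0^T\!\!\int_{\mathcal{O}_1} s^3\Theta^3\varrho_1^2 e^{2s\Phi}\,dxdt.
\end{equation*}
The $\varrho_1$-term is then absorbed by the previous step. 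Combining everything with \eqref{ineqcarlprinc} and choosing $\varepsilon$ small and $s$ large yields \eqref{obser}.

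\textbf{Main obstacle.} The delicate point is the $\rho_2$-elimination: after the integration by parts, one picks up global integrals of $\rho_{1,t}^2$, $((a\rho_{1,x})_x)^2$, and $a\rho_{1,x}^2$ weighted by polynomials in $s\Theta$ times $e^{2s\Phi}$. These must be absorbed into the LHS of \eqref{ineqcarlprinc} at the cost of a large polynomial factor in $s$, which is the origin of the exponent $15$. Controlling this, while avoiding the reappearance of $\rho_2^2$ or $\varrho^2$ contributions, requires careful bookkeeping of $s$-powers, repeated use of Caccioppoli's inequality \eqref{caccio}, and comparison of the weights $\varphi$ and $\Phi$ via \eqref{ineqphi}.
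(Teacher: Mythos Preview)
Your overall strategy is correct and close to the paper's, but the two proofs differ in how the local $\varrho$–terms are removed, and your write-up contains one imprecision worth flagging.

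\textbf{Comparison with the paper.} The paper proceeds in two steps. In Step~1 it eliminates the local $\varrho_1,\varrho_2$ terms by a \emph{duality multiplier} argument: one multiplies the backward $\rho$–equations \eqref{rhobis} by $u_1\xi_1\varrho_i$ with $u_1=s^3\Theta^3e^{2s\Phi}$, integrates by parts, and uses the forward $\varrho$–equations to close; this produces local terms in $\rho_1,\rho_2$ with the heavier weight $s^7\Theta^{13}e^{2s(2\Phi-\varphi)}$ (estimate \eqref{obser1}). Step~2 then eliminates the local $\rho_2$ term exactly as you describe, using $d\ge d_0>0$ and the first equation of \eqref{rhobis}, which generates the final power $s^{15}$; a residual $\varrho_1$ contribution is removed via the plain energy estimate \eqref{l}--\eqref{la} and $\mu_i$ large. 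Your route replaces the paper's duality argument in Step~1 by a direct energy estimate on the forward system \eqref{varrho}; this is legitimate and in fact simpler. With your ordering the weight entering the $\rho_2$–elimination is only $s^3\Theta^3e^{2s\Phi}$, so tracking the Young inequalities actually gives a local $\rho_1$–term with power of order $s^{7}$ rather than $s^{15}$, which of course still proves \eqref{obser}.

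\textbf{A genuine imprecision.} Your sentence ``an energy estimate against the weighted test functions $s^3\Theta^3 e^{2s\Phi}\varrho_i$'' does not work as stated: multiplying \eqref{varrho} by that weight produces a term $\tfrac12\int_Q (s^3\Theta^3 e^{2s\Phi})_t\,\varrho_i^2$, and since $|(s^3\Theta^3 e^{2s\Phi})_t|\le C s^4\Theta^8 e^{2s\Phi}$ with no sign control, this cannot be absorbed. What does work (and is what the paper uses in \eqref{l}--\eqref{la}) is to first observe that $s^3\Theta^3 e^{2s\Phi}\le C$ uniformly in $s$ and $(t,x)$ (because $\Phi<0$), then apply the \emph{unweighted} energy estimate for \eqref{varrho} together with $\rho_*^{-2}\le e^{s\varphi_*}$ and the Hardy--Poincar\'e bound \eqref{ine1}; this yields
\[
\int_0^T\!\!\int_{\mathcal O_1}\! s^3\Theta^3 e^{2s\Phi}(\varrho_1^2+\varrho_2^2)\,dx\,dt\;\le\;\frac{C}{s}\Bigl(\frac{\alpha_1^2}{\mu_1^2}+\frac{\alpha_2^2}{\mu_2^2}\Bigr)\,\mathcal I(\rho_1),
\]
which is indeed absorbed for $s$ (or $\mu_i$) large. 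With this correction your proof goes through.
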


\begin{proof}
	
The proof of this proposition is inspired by techniques from \cite[Lemma 1]{teresa2000insensitizing}.	We will achieve it in two steps.\\	
	\textbf{Step 1.}	
	First, we want to eliminate the local term corresponding to $\varrho_1$ and $\varrho_2$ on the right hand side of \eqref{ineqcarlprinc}.
	\noindent So, let $\O_2$ be a non empty open set such that $\O_1\Subset \O_2\Subset \omega_d\cap\omega$. We introduce as in \cite{teresa2000insensitizing} the cut off function $\xi_1\in C^{\infty}_0(\Omega)$ such that \begin{subequations}\label{owogene1}
		\begin{alignat}{11}
		\dis 	0\leq \xi_1\leq 1\ \mbox{in}\ \Omega, \,\,\xi_1=1 \hbox{ in } \O_1 ,\,\,  \xi_1=0 \hbox{ in } \Omega\setminus\O_2,\\
		\dis 	\frac{\xi_{1,xx}}{\xi_1^{1/2}}\in L^{\infty}(\O_2),\,\, \frac{\xi_{1,x}}{\xi_1^{1/2}}\in L^{\infty}(\O_2).
		\end{alignat}
	\end{subequations}
Set $\dis u_1=s^3\Theta^3e^{2s\Phi}$. Then $u_1(T)=u_1(0)=0$ and we have the following estimations:

\begin{equation}\label{con}
	\begin{array}{rll}
		\dis |u_1\xi_1|\leq s^3\Theta^3e^{2s\Phi}\xi_1,\ \ \ \ \ \
		\dis \left|(u_1\xi_1)_t\right|\leq Cs^4\Theta^8e^{2s\Phi}\xi_1,\\
		\\
		\dis |(u_1\xi_1)_x|\leq Cs^4\Theta^4e^{2s\Phi}\xi_1,\ \ \ \ \ \
		\dis |(a(x)(u_1\xi_1)_x)_x|\leq Cs^5\Theta^5e^{2s\Phi}\xi_1,
	\end{array}
\end{equation}
where $C$ is a positive constant.\par 
	If we multiply the first and the second  equation of \eqref{rhobis} by $u_1\xi_1\varrho_1$ and $u_1\xi_1\varrho_2$, respectively, and integrate by parts over $Q$, we obtain
	\begin{equation}\label{beau}
		J_1+J_2+J_3+J_4+J_5+J_6+J_7=\int_{Q}u_1\xi_1(|\varrho_1|^2+|\varrho_2|^2)\chi_{\omega_d}\ \dT,
	\end{equation}
where
	$$\begin{array}{lllll}
	J_1=\dis-\frac{\alpha_1}{\mu_1}\int_{Q}u_1\xi_1|\rho_1|^2\chi_{\omega_{1}} \ \dT-\frac{\alpha_2}{\mu_2}\int_{Q}u_1\xi_1|\rho_1|^2\chi_{\omega_{2}} \ \dT,\,
	J_2=\int_{Q}\rho_1\varrho_1\frac{\partial (u_1\xi_1)}{\partial t}\ \dT,\\
	\dis J_3=\int_{Q}\rho_2\varrho_2\frac{\partial (u_1\xi_1)}{\partial t}\ \dT,\  J_4= \dis -\int_{Q}(a(x)(u_1\xi_1)_x)_x\ \rho_1\varrho_1\ \dT,\ J_5= \dis -\int_{Q}(a(x)(u_1\xi_1)_x)_x\ \rho_2\varrho_2\ \dT,\\
	\dis J_6=\dis-2\int_Qa(x)(u_1\xi_1)_x\rho_1\varrho_{1,x}\ \dT,\ J_7=\dis-2\int_Qa(x)(u_1\xi_1)_x\rho_2\varrho_{2,x}\ \dT.
	\end{array}
	$$
		
	Let us estimate $J_i,\ i=1,\cdots,7$. Using Young's inequality, we have
	
	\begin{eqnarray*}
		J_1&\leq& \dis \left(\frac{\alpha_1}{\mu_1}+\frac{\alpha_2}{\mu_2}\right)C\int_{Q} s^3\Theta^3e^{2s\Phi}\xi_1|\rho_1|^2\ \dT\\
		&\leq&\dis \left(\frac{\alpha_1^2}{\mu_1^2}+\frac{\alpha_2^2}{\mu_2^2}\right)\int_Q s^3\Theta^3\frac{x^2}{a(x)}e^{2s\varphi}|\rho_1|^2\dT+C\int_{0}^{T}\int_{\O_2} s^3\Theta^3\frac{a(x)}{x^2}e^{2s(2\Phi-\varphi)}|\rho_1|^2\dT,	
	\end{eqnarray*}
	
	\begin{eqnarray*}
		J_2&\leq&\dis C\int_{Q}s^4\Theta^8e^{2s\Phi}\xi_1|\rho_1\varrho_1|\ \dT\\
		&\leq&\dis \frac{\delta_2}{2}\int_Q s^3\Theta^3\frac{x^2}{a(x)}e^{2s\varphi}|\varrho_1|^2\dT+C_{\delta_2}\int_{0}^{T}\int_{\O_2} s^5\Theta^{13}\frac{a(x)}{x^2}e^{2s(2\Phi-\varphi)}|\rho_1|^2\dT,
	\end{eqnarray*}
	
	\begin{eqnarray*}
		J_3&\leq&\dis C\int_{Q}s^4\Theta^8e^{2s\Phi}\xi_1|\rho_2\varrho_2|\ \dT\\
		&\leq&\dis \frac{\delta_3}{2}\int_Q s^3\Theta^3\frac{x^2}{a(x)}e^{2s\varphi}|\varrho_2|^2\dT+C_{\delta_3}\int_{0}^{T}\int_{\O_2} s^5\Theta^{13}\frac{a(x)}{x^2}e^{2s(2\Phi-\varphi)}|\rho_2|^2\dT,
	\end{eqnarray*}
	
	\begin{eqnarray*}
		J_4&\leq&C\int_{Q}s^5\Theta^5e^{2s\Phi}\xi_1|\rho_1\varrho_1|\ \dT\\
		&\leq&\dis \frac{\delta_4}{2}\int_Q s^3\Theta^3\frac{x^2}{a(x)}e^{2s\varphi}|\varrho_1|^2\dT+C_{\delta_4}\int_{0}^{T}\int_{\O_2} s^7\Theta^7\frac{a(x)}{x^2}e^{2s(2\Phi-\varphi)}|\rho_1|^2\dT,
	\end{eqnarray*}

\begin{eqnarray*}
	J_5&\leq&C\int_{Q}s^5\Theta^5e^{2s\Phi}\xi_1|\rho_2\varrho_2|\ \dT\\
	&\leq&\dis \frac{\delta_5}{2}\int_Q s^3\Theta^3\frac{x^2}{a(x)}e^{2s\varphi}|\varrho_2|^2\dT+C_{\delta_5}\int_{0}^{T}\int_{\O_2} s^7\Theta^7\frac{a(x)}{x^2}e^{2s(2\Phi-\varphi)}|\rho_2|^2\dT,
\end{eqnarray*}
	
		\begin{eqnarray*}
			J_6&\leq&C\int_{Q}s^4\Theta^4a(x)e^{2s\Phi}\xi_1|\rho_1\varrho_{1,x}|\ \dT\\
			&\leq&\dis \frac{\delta_6}{2}\int_Q s\Theta a(x)e^{2s\varphi}|\varrho_{1,x}|^2\dT+C_{\delta_6}\int_{0}^{T}\int_{\O_2} s^7\Theta^7a(x)e^{2s(2\Phi-\varphi)}|\rho_1|^2\dT,
		\end{eqnarray*}
	
		\begin{eqnarray*}
		J_7&\leq&C\int_{Q}s^4\Theta^4a(x)e^{2s\Phi}\xi_1|\rho_2\varrho_{2,x}|\ \dT\\
		&\leq&\dis \frac{\delta_7}{2}\int_Q s\Theta a(x)e^{2s\varphi}|\varrho_{2,x}|^2\dT+C_{\delta_7}\int_{0}^{T}\int_{\O_2} s^7\Theta^7a(x)e^{2s(2\Phi-\varphi)}|\rho_2|^2\dT.
	\end{eqnarray*}
	
	Finally, choosing the constants $\delta_i$ such that $\dis \delta_2=\delta_3=\delta_4=\delta_5=\frac{1}{4C_1}$ and $\dis \delta_6=\delta_7=\frac{1}{2C_1}$, where $C_1$ is the constant obtained to Theorem \ref{thm1}, it follows from \eqref{beau} and the previous inequalities that
	\begin{equation}\label{owo4}
	\begin{array}{rll}
	\dis \int_0^T\int_{\O_1}s^3\Theta^3e^{2s\Phi}(|\varrho_1|^2+|\varrho_2|^2)\ \dT	
	&\leq&\dis \frac{1}{2C_1}\mathcal{I}(\varrho_1)+\frac{1}{2C_1}\mathcal{I}(\varrho_2)\\
	&+&\dis \left(\frac{\alpha_1^2}{\mu_1^2}+\frac{\alpha_2^2}{\mu_2^2}\right)\int_0^T\int_{Q} s^3\Theta^3\frac{x^2}{a(x)}e^{2s\varphi}|\rho_1|^2\ \dT\\
	&+&\dis C\int_{0}^{T}\int_{\O_2} s^7\Theta^{13}\frac{a(x)}{x^2}e^{2s(2\Phi-\varphi)}(|\rho_1|^2+|\rho_2|^2)\ \dT\\
	&&\dis +C\int_{0}^{T}\int_{\O_2} s^7\Theta^7a(x)e^{2s(2\Phi-\varphi)}(|\rho_1|^2+|\rho_2|^2)\ \dT.
	\end{array}
	\end{equation}
	Combining \eqref{ineqcarlprinc} with \eqref{owo4} and taking $\mu_i,\ i=1,2$ large enough, we obtain 
	\begin{eqnarray}\label{obser2}
	\dis \mathcal{I}(\rho_1)+\mathcal{I}(\rho_2)+\mathcal{I}(\varrho_1)+\mathcal{I}(\varrho_2)\leq \dis C\int_{0}^{T}\int_{\O_2} s^7\Theta^{13}\frac{a(x)}{x^2}e^{2s(2\Phi-\varphi)}(|\rho_1|^2+|\rho_2|^2)\ \dT\nonumber\\
	\dis +C\int_{0}^{T}\int_{\O_2} s^7\Theta^{7}a(x)e^{2s(2\Phi-\varphi)}(|\rho_1|^2+|\rho_2|^2)\ \dT.
	\end{eqnarray}
Since $\dis \frac{a(x)}{x^2}$ and $a(x)$ are bounded on $\O_2$, 
then using \eqref{obser2}, we obtain the existence of a positive constant $C_2$ such that 	
		\begin{eqnarray}\label{obser1}
		\dis \mathcal{I}(\rho_1)+\mathcal{I}(\rho_2)+\mathcal{I}(\varrho_1)+\mathcal{I}(\varrho_2)
		\leq C_2\int_{0}^{T}\int_{\O_2} s^7\Theta^{13}e^{2s(2\Phi-\varphi)}(|\rho_1|^2+|\rho_2|^2)\ \dT.
		\end{eqnarray} 	
\textbf{Step 2.}	
Now, we want to eliminate the local term $\rho_2$ on the right hand side of \eqref{obser1}.
We take a non empty open set $\O_3$ such that $\O_2\Subset \O_3\Subset \omega_d\cap\omega$. Let $\xi_2\in C^{\infty}_0(\Omega)$ a cut off function  such that 
\begin{subequations}\label{owogene1a}
	\begin{alignat}{11}
		\dis 	0\leq \xi_2\leq 1\ \mbox{in}\ \Omega, \,\,\xi_2=1 \hbox{ in } \O_2 ,\,\,  \xi_2=0 \hbox{ in } \Omega\setminus\O_3,\\
		\dis 	\frac{\xi_{2,xx}}{\xi_2^{1/2}}\in L^{\infty}(\O_3),\,\, \frac{\xi_{2,x}}{\xi_2^{1/2}}\in L^{\infty}(\O_3).
	\end{alignat}
\end{subequations}
We set $\dis u_2=s^7\Theta^{13}e^{2s(2\Phi-\varphi)}$. Then, there exists a positive constant $C$ such that the following inequalities holds:

\begin{equation}\label{cona}
	\begin{array}{rll}
		\dis |u_2\xi_2|\leq s^7\Theta^{13}e^{2s(2\Phi-\varphi)}\xi_2,\ \ \ \ \ \
		\dis \left|(u_2\xi_2)_t\right|\leq Cs^8\Theta^{18}e^{2s(2\Phi-\varphi)}\xi_2,\\
		\\
		\dis |(u_2\xi_2)_x|\leq Cs^8\Theta^{14}e^{2s(2\Phi-\varphi)}\xi_2,\ \ \ \ \ \
		\dis |(a(x)(u_2\xi_2)_x)_x|\leq Cs^9\Theta^{15}e^{2s(2\Phi-\varphi)}\xi_2.
	\end{array}
\end{equation}
Multiplying the first  equation of \eqref{rhobis} by $u_2\xi_2\rho_2$ and integrating by parts over $Q$, we obtain
\begin{equation}\label{beaua}
	\int_{Q}du_2\xi_2|\rho_2|^2\ \dT=K_1+K_2+K_3+K_4+K_5+K_6+K_7,
\end{equation}
where
$$
\begin{array}{lllll}
	\dis K_1=-\int_{Q}\rho_1\rho_2(u_2\xi_2)_t\ \dT,\ \ K_2=-\int_{Q}u_2\xi_2\rho_1\rho_{2,t}\ \dT,\ K_3= \dis \int_{Q}(a(x)(u_2\xi_2)_x)_x\ \rho_1\rho_2\ \dT,\\
	\dis K_4=\dis2\int_Qa(x)(u_2\xi_2)_x\rho_1\rho_{2,x}\ \dT,\  K_5= \dis \int_{Q}u_2\xi_2\rho_1(a(x)\rho_{2,x})_x\ \dT,\\ K_6= \dis- \int_{Q}b_1^{w_1}u_2\xi_2\rho_1\rho_2\ \dT,\ K_7= \dis \int_{Q}u_2\xi_2\varrho_1\rho_2\chi_{\omega_d}\ \dT.
\end{array}
$$
Let us estimate $K_i,\ i=1,\cdots,7$. Using inequalities \eqref{cona}, we have

\begin{eqnarray*}
	K_1\leq\dis \frac{\gamma_1}{2}\int_Q s^3\Theta^3\frac{x^2}{a(x)}e^{2s\varphi}|\rho_2|^2\dT+C_{\gamma_1}\int_{0}^{T}\int_{\O_3} s^{13}\Theta^{33}\frac{a(x)}{x^2}e^{2s(4\Phi-3\varphi)}|\rho_1|^2\dT,
\end{eqnarray*}

\begin{eqnarray*}
	K_2\leq\dis \frac{\gamma_2}{2}\int_Q\frac{1}{s\Theta} e^{2s\varphi}|\rho_{2,t}|^2\dT+C_{\gamma_2}\int_{0}^{T}\int_{\O_3} s^{15}\Theta^{27}e^{2s(4\Phi-3\varphi)}|\rho_1|^2\dT,
\end{eqnarray*}

\begin{eqnarray*}
	K_3\leq\dis \frac{\gamma_3}{2}\int_Q s^3\Theta^3\frac{x^2}{a(x)}e^{2s\varphi}|\rho_2|^2\dT+C_{\gamma_3}\int_{0}^{T}\int_{\O_3} s^{15}\Theta^{27}\frac{a(x)}{x^2}e^{2s(4\Phi-3\varphi)}|\rho_1|^2\dT,
\end{eqnarray*}

\begin{eqnarray*}
	K_4\leq\dis \frac{\gamma_4}{2}\int_Q s\Theta a(x)e^{2s\varphi}|\rho_{2,x}|^2\dT+C_{\gamma_4}\int_{0}^{T}\int_{\O_3} s^{15}\Theta^{27}a(x)e^{2s(4\Phi-3\varphi)}|\rho_1|^2\dT,
\end{eqnarray*}	

\begin{eqnarray*}
	K_5\leq\dis \frac{\gamma_5}{2}\int_Q\frac{1}{s\Theta} e^{2s\varphi}|(a(x)\rho_{2,x})_x|^2\dT+C_{\gamma_5}\int_{0}^{T}\int_{\O_3} s^7\Theta^{7}e^{2s(4\Phi-3\varphi)}|\rho_1|^2\dT,
\end{eqnarray*}

\begin{eqnarray*}
	K_6\leq\dis \frac{\gamma_6}{2}\int_Q s^3\Theta^3\frac{x^2}{a(x)}e^{2s\varphi}|\rho_2|^2\dT+C_{\gamma_6}\int_{0}^{T}\int_{\O_3} s^{11}\Theta^{23}\frac{a(x)}{x^2}e^{2s(4\Phi-3\varphi)}|\rho_1|^2\dT.
\end{eqnarray*}
For $K_7$, we have
\begin{eqnarray*}
	K_7\leq\dis \frac{\gamma_7}{2}\int_Q s^3\Theta^3\frac{x^2}{a(x)}e^{2s\varphi}|\rho_2|^2\dT+C_{\gamma_7}\int_{0}^{T}\int_{\O_3} s^{11}\Theta^{23}\frac{a(x)}{x^2}e^{2s(4\Phi-3\varphi)}|\varrho_1|^2\dT.
\end{eqnarray*}
Thanks to \eqref{ineqphi}, we have $4\Phi-3\varphi\leq 0$, then $\dis \Theta^{23}e^{2s(4\Phi-3\varphi)}\in L^\infty(Q)$. Furthermore, $\dis \frac{a(x)}{x^2}$ is bounded on $\O_3$. Then, $K_7$ becomes
\begin{eqnarray*}
	K_7\leq\dis \frac{\gamma_7}{2}\int_Q s^3\Theta^3\frac{x^2}{a(x)}e^{2s\varphi}|\rho_2|^2\dT+C_{\gamma_7,s}\int_{0}^{T}\int_{\O_3} |\varrho_1|^2\dT.
\end{eqnarray*}

We choose the constants $\gamma_i$ such that $\dis \gamma_1=\gamma_3=\gamma_6=\gamma_7=\frac{d_0}{4C_2}$ and $\dis \gamma_2=\gamma_5=\gamma_4=\frac{d_0}{C_2}$, where $C_2$ is the constant obtained in \eqref{obser1} and $d_0$ is defined in \eqref{d}. Using the condition \eqref{d} and the fact that $\dis \frac{a(x)}{x^2}$ and $a(x)$ are bounded on $\O_3$, we obtain from \eqref{beaua} 
\begin{equation}\label{owo4a}
	\begin{array}{rll}
		\dis d_0\int_0^T\int_{\O_2}s^7\Theta^{13}e^{2s(4\Phi-3\varphi)}|\varrho_2|^2\ \dT	
		\leq\dis \frac{d_0}{2C_2}\mathcal{I}(\varrho_2)+\dis C_3\int_{0}^{T}\int_{\O_3} s^{15}\Theta^{33}e^{2s(4\Phi-3\varphi)}|\rho_1|^2\ \dT\\
		\dis +C(s)\int_{0}^{T}\int_{\O_3} |\varrho_1|^2\dT.
	\end{array}
\end{equation}
Putting the estimate \eqref{owo4a} in \eqref{obser1}, we get
	\begin{eqnarray}\label{obser1a}
	\dis \mathcal{I}(\rho_1)+\mathcal{I}(\rho_2)+\mathcal{I}(\varrho_1)+\mathcal{I}(\varrho_2)
	\leq C_4\int_{0}^{T}\int_{\O_3} s^{15}\Theta^{33}e^{2s(4\Phi-3\varphi)}|\rho_1|^2\ \dT +C(s)\int_{0}^{T}\int_{\O_3} |\varrho_1|^2\dT.
\end{eqnarray} 	
In order to eliminate the last term in the previous inequality, we establish the energy estimates for the system \eqref{varrho} and we obtain:
\begin{equation}\label{l}
\begin{array}{llll}
	\dis \int_{0}^{T}\int_{\O_3}(|\varrho_1|^2+|\varrho_2|^2)\ dxdt
	\dis \leq C(\|c_1\|_{\infty}, \|c_2\|_{\infty}, \|d\|_{\infty},T) \left(\frac{\alpha_1^2}{\mu_1^2}+\frac{\alpha_2^2}{\mu_2^2}\right)\int_{0}^{T}\int_{\O_3} |\rho_{*}^{-2}\rho_1|^2\ dxdt.
\end{array}
\end{equation}
Using the definition of $\rho_{*}(t)$ given in Remark \ref{r}, we get $\dis \rho_{*}^{-2}\leq e^{s\varphi_{*}}\leq e^{s\varphi}$. Furthermore, $\Theta\in L^\infty((0,T))$ and  $\dis \frac{x^2}{a(x)}$ is non-decreasing on $(0,1]$. Then, the inequality \eqref{l} becomes
\begin{equation}\label{la}
	\begin{array}{llll}
		\dis \int_{0}^{T}\int_{\O_3}(|\varrho_1|^2+|\varrho_2|^2)\ dxdt
		\dis \leq C(\|c_1\|_{\infty}, \|c_2\|_{\infty}, \|d\|_{\infty},T) \left(\frac{\alpha_1^2}{\mu_1^2}+\frac{\alpha_2^2}{\mu_2^2}\right)\int_{0}^{T}\int_{\O_3}\Theta^3 \frac{x^2}{a(x)}e^{2s\varphi}|\rho_1|^2\ dxdt.
	\end{array}
\end{equation}
Combining \eqref{la} with \eqref{obser1a} and taking $\mu_i,\ i=1,2$ large enough, we obtain
	\begin{eqnarray}\label{obser1ab}
	\dis \mathcal{I}(\rho_1)+\mathcal{I}(\rho_2)+\mathcal{I}(\varrho_1)+\mathcal{I}(\varrho_2)
	\leq C_4\int_{0}^{T}\int_{\O_3} s^{15}\Theta^{33}e^{2s(4\Phi-3\varphi)}|\rho_1|^2\ \dT.
\end{eqnarray} 
Thanks to \eqref{ineqphi}, we have $4\Phi-3\varphi\leq 0$, then $\dis \Theta^{33}e^{2s(4\Phi-3\varphi)}\in L^\infty(Q)$. Furthermore, using the fact that, $\O_3\subset\omega$, we deduce the inequality \eqref{obser} and we complete the proof of Proposition \ref{prop5}.	
\end{proof}
\paragraph{}

To prove the needed observability inequality, we are going to improve the Carleman inequality \eqref{obser}. To this end, we modify the weight functions $\varphi$ and $\Theta$ defined in \eqref{functcarl} as follows:
\begin{equation}\label{phitil}
\widetilde{\varphi}(t,x)=\left\{
\begin{array}{rllll}
\dis\varphi\left(\frac{T}{2},x\right)\ \ \mbox{if}\ \ t\in \left[0,\frac{T}{2}\right],\\
\dis \varphi(t,x)\ \ \mbox{if}\ \ t\in \left[\frac{T}{2},T\right]
\end{array}
\right.
\end{equation}
and
\begin{equation}\label{Thetatil}
\widetilde{\Theta}(t)=\left\{
\begin{array}{rllll}
\dis\Theta\left(\frac{T}{2}\right)\ \ \mbox{if}\ \ t\in \left[0,\frac{T}{2}\right],\\
\dis \Theta(t)\ \ \mbox{if}\ \ t\in \left[\frac{T}{2},T\right].
\end{array}
\right.
\end{equation} Then in view of the definition of $\varphi$ and $\Theta$, the functions $\widetilde{\varphi}(.,x)$ and $\widetilde{\Theta}(\cdot)$ are non positive function of class $\mathcal{C}^1$ on $[0,T[$. From now on, we fix the parameter $s$. We have the following result.

\begin{prop} \label{pro}$ $
	
	  Under the assumptions of Proposition \ref{prop5}, there exist a positive constant\\ $C=C(C_4,\|b_1\|_{\infty}, \|b_2\|_{\infty},\|c_1\|_{\infty}, \|c_2\|_{\infty}, \|d\|_{\infty},\mu_1,\mu_2, T)>0$ and a positive weight function $\kappa$ such that every solution $\rho=(\rho_1,\rho_2)$ and $\psi^i=(\psi_1^i,\psi_2^i)$  of \eqref{rho} and \eqref{psi}, respectively, satisfy the following inequality:
	
	\begin{eqnarray}\label{obser3}
	\|\rho_1(0,\cdot)\|^2_{L^2(\Omega)}+\|\rho_2(0,\cdot)\|^2_{L^2(\Omega)}+\sum_{i=1}^{2}\int_{Q}\kappa^2(|\psi_1^i|^2+|\psi_2^i|^2)\,\dT
	\leq C\int_{0}^{T}\int_{\omega}|\rho_1|^2\,\dq,
	\end{eqnarray}
	where the constant $C_4$ is given by the Proposition \ref{prop5}.
\end{prop}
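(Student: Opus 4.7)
The strategy is the standard passage from the Carleman estimate \eqref{obser} to an observability inequality, adapted to the forward--backward coupling between \eqref{rho} and \eqref{psi} and to the cost weight $\rho_{*}$ from \eqref{all16}. The argument has four linked steps.

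First, I extract an unweighted $L^{2}$-bound on a middle subinterval. On $[T/4,3T/4]\subset(0,T)$ both $\Theta$ and $e^{2s\varphi}$ are bounded above and below by positive constants, so the terms $\int_{Q}s^{3}\Theta^{3}\frac{x^{2}}{a(x)}z^{2}e^{2s\varphi}\,\dq$ and $\int_{Q}s\Theta a(x)z_{x}^{2}e^{2s\varphi}\,\dq$ appearing in $\mathcal{I}(z)$, combined with the Hardy--Poincar\'e inequality \eqref{hardy} (since \eqref{k} forces $\frac{a(x)}{x^{2}}\geq c>0$ on $(0,1]$, whence $\int_{\Omega}|z|^{2}\leq C\int_{\Omega}a(x)z_{x}^{2}$), and invoking \eqref{obser}, yield
\begin{equation*}
\int_{T/4}^{3T/4}\int_{\Omega}\left(|\rho_{1}|^{2}+|\rho_{2}|^{2}+|\varrho_{1}|^{2}+|\varrho_{2}|^{2}\right)\,\dq\leq C\int_{0}^{T}\int_{\omega}|\rho_{1}|^{2}\,\dq.
\end{equation*}
I then propagate to $t=0$ via a backward energy estimate for \eqref{rho}: multiplying the two equations by $\rho_{1}$ and $\rho_{2}$ respectively, integrating over $\Omega$ and using Young's inequality together with $b_{i},d\in L^{\infty}(Q)$ leads to
\begin{equation*}
-\frac{d}{dt}\left(\|\rho_{1}\|^{2}_{L^{2}(\Omega)}+\|\rho_{2}\|^{2}_{L^{2}(\Omega)}\right)\leq C\left(\|\rho_{1}\|^{2}_{L^{2}(\Omega)}+\|\rho_{2}\|^{2}_{L^{2}(\Omega)}\right)+C\sum_{i,j=1}^{2}\|\psi_{j}^{i}(t,\cdot)\|^{2}_{L^{2}(\Omega)}.
\end{equation*}
A backward Gr\"onwall argument from any $t^{\star}\in[T/4,3T/4]$ down to $t=0$, followed by averaging in $t^{\star}$ and the previous bound, yields
\begin{equation*}
\|\rho_{1}(0,\cdot)\|^{2}_{L^{2}(\Omega)}+\|\rho_{2}(0,\cdot)\|^{2}_{L^{2}(\Omega)}\leq C\int_{0}^{T}\int_{\omega}|\rho_{1}|^{2}\,\dq+C\sum_{i,j=1}^{2}\|\psi_{j}^{i}\|^{2}_{L^{2}(Q)}.
\end{equation*}

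The $\psi$-contribution is then absorbed. Since \eqref{psi} is a forward cascade system with zero initial data and source $-\mu_{i}^{-1}\rho_{*}^{-2}\rho_{1}\chi_{\omega_{i}}$, the parabolic energy estimate gives $\sum_{j}\|\psi_{j}^{i}\|^{2}_{L^{2}(Q)}\leq C\mu_{i}^{-2}\int_{0}^{T}\int_{\omega_{i}}\rho_{*}^{-4}|\rho_{1}|^{2}\,\dq$. By Remark~\ref{r} and $\varphi_{*}\leq\varphi$ one has $\rho_{*}^{-4}\leq e^{2s\varphi}$, and since $\omega_{i}\Subset(0,1)$ both $\frac{x^{2}}{a(x)}$ and $s^{3}\Theta^{3}$ are bounded below there, so $\rho_{*}^{-4}\leq Cs^{3}\Theta^{3}\frac{x^{2}}{a(x)}e^{2s\varphi}$ pointwise on $(0,T)\times\omega_{i}$. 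Applying \eqref{obser} then produces $\sum_{j}\|\psi_{j}^{i}\|^{2}_{L^{2}(Q)}\leq C\mu_{i}^{-2}\int_{0}^{T}\int_{\omega}|\rho_{1}|^{2}\,\dq$, and taking $\mu_{i}$ large enough absorbs the $\psi$-term of the preceding estimate. The $\kappa$-weighted $\psi$-term on the left-hand side of \eqref{obser3} is obtained by choosing $\kappa=\kappa(t)$ proportional to $(s\widetilde{\Theta})^{3/2}e^{s\widetilde{\varphi}_{*}}$, with $\widetilde{\varphi},\widetilde{\Theta}$ as in \eqref{phitil}--\eqref{Thetatil}, so that $\kappa^{2}\rho_{*}^{-4}\leq Cs^{3}\Theta^{3}\frac{x^{2}}{a(x)}e^{2s\varphi}$ on $(0,T)\times\omega_{i}$; a weighted energy estimate for \eqref{psi}, combined once more with \eqref{obser}, completes \eqref{obser3}.

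\noindent\textbf{Main obstacle.} The crux is to reconcile three different time weights: the cost weight $\rho_{*}$, the Carleman weight $e^{s\varphi}$, and the observation weight $\kappa$. The pointwise comparison $\kappa^{2}\rho_{*}^{-4}\leq Cs^{3}\Theta^{3}\frac{x^{2}}{a(x)}e^{2s\varphi}$ on $\omega_{i}$ that is required in the last two steps is exactly what forces the specific structure of $\rho_{*}$ postulated in Remark~\ref{r} and explains why $\mu_{i}$ must be taken large; without this matching the cascade of estimates cannot close. The degeneracy of $a(\cdot)$ at $x=0$ is harmless in these last two steps because the relevant computations are localized on $\omega_{i}\Subset(0,1)$, away from the degeneracy point.
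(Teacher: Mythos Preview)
Your plan is essentially the same as the paper's: both combine the Carleman inequality \eqref{obser} on a central time interval with a backward energy/Gr\"onwall argument for $\rho$ to control $\rho(0,\cdot)$, and then a forward weighted energy estimate for \eqref{psi} to obtain the $\kappa$-term. The paper carries out the backward step via a time cutoff $\beta\in C^1([0,T])$ and an auxiliary system rather than a direct Gr\"onwall, and it takes the simpler choice $\kappa(t)=e^{s\hat\varphi(t)}$ with $\hat\varphi(t)=\min_{x}\widetilde\varphi(t,x)$ instead of your $(s\widetilde\Theta)^{3/2}e^{s\widetilde\varphi_*}$, but these are implementation variants of the same argument.

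Two small remarks. First, once you have $\sum_{j}\|\psi_j^i\|_{L^2(Q)}^2\le C\mu_i^{-2}\int_{\omega_T}|\rho_1|^2$ there is nothing to ``absorb'': this term simply adds to the right-hand side, so the largeness of $\mu_i$ is not needed at that point (it is used earlier, inside \eqref{obser}). Second, the hypothesis $\omega_i\Subset(0,1)$ that you invoke to bound $\frac{x^2}{a(x)}$ from below on $\omega_i$ is not part of the paper's assumptions; you can dispense with it by using, as you already do in your first step, that $\frac{a(x)}{x^2}\ge a(1)>0$ under \eqref{k} together with Hardy--Poincar\'e applied to $e^{s\varphi}\rho_1$, which bounds $\int_Q e^{2s\varphi}|\rho_1|^2$ directly by the gradient term in $\mathcal{I}(\rho_1)$.
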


\begin{proof} 
	
	We proceed in two steps.\\
\textbf{Step 1.} We prove that there exist a constant $C=C(C_4,\|b_1\|_{\infty}, \|b_2\|_{\infty}, \|d\|_{\infty},T)>0$ such that 
\begin{equation}\label{observ}
	\begin{array}{llll}
		\dis  \|\rho_1(0,\cdot)\|^2_{L^2(\Omega)}+\|\rho_2(0,\cdot)\|^2_{L^2(\Omega)}+\sum_{i=1}^{2}\widetilde{\mathcal{I}}_{[0,T]}(\rho_i)+\widetilde{\mathcal{I}}_{[0,T]}(\varrho_i)
		\dis \leq\dis C\int_{0}^{T}\int_{\omega}|\rho_1|^2\,\dq,
	\end{array}
\end{equation}
	where $\widetilde{\mathcal{I}}(\cdot)$ is defined in the follow by \eqref{car3}.

	Let us introduce a function $\beta\in \mathcal{C}^1 ([0,T])$  such that
	\begin{equation}\label{hypobeta}
	0\leq \beta\leq 1,\ \beta(t)=1 \hbox{ for } t\in [0,T/2],\  \beta(t)=0 \hbox{ for } t\in [3T/4,T],\  |\beta^\prime(t)|\leq C/T.
	\end{equation}
	For any $(t,x)\in Q$, we set
	$$
	\begin{array}{lll}
	z_i(t,x)=\beta(t)e^{-r(T-t)}\rho_i(t,x),\ i=1,2,
	\end{array}
	$$
	where $r>0$. Then in view of \eqref{rhobis}, the function $z=(z_1,z_2)$  is solution of
	\begin{equation}\label{eq26z}
		\left\{
		\begin{array}{rllll}
			\dis -z_{1,t}-\left(a(x)z_{1,x}\right)_{x}+b_1z_1+dz_2  &=& \dis \beta e^{-r(T-t)}\varrho_1\chi_{\omega_d}-\beta^\prime e^{-r(T-t)} \rho_1 & \mbox{in}& Q,\\
			\dis -z_{2,t}-\left(a(x)z_{2,x}\right)_{x}+b_2z_2  &=&\dis \beta e^{-r(T-t)}\varrho_2\chi_{\omega_d}-\beta^\prime e^{-r(T-t)} \rho_2& \mbox{in}& Q,\\
			\dis z_1(t,0)=z_1(t,1)=z_2(t,0)=z_2(t,1)&=&0& \mbox{on}& (0,T), \\
			\dis z_1(T,\cdot)=z_2(T,\cdot)&=&0 &\mbox{in}&\Omega.
		\end{array}
		\right.
	\end{equation}
	Using the classical energy estimates for the system \eqref{eq26z} and using the definition of $\beta$ and $z$, we get
	$$
	\begin{array}{llll}
	\dis \|\rho_1(0,\cdot)\|^2_{L^2(\Omega)}+\|\rho_2(0,\cdot)\|^2_{L^2(\Omega)}+\int_{0}^{T/2}\int_{\Omega}(|\rho_1|^2+|\rho_2|^2)\ dxdt+\int_{0}^{T/2}\int_{\Omega}a(x)(|\rho_{1,x}|^2+|\rho_{2,x}|^2)\ dxdt\\
	\dis \leq C(\|b_1\|_{\infty}, \|b_2\|_{\infty}, \|d\|_{\infty},T)
	\left(\int_0^{3T/4}\int_\Omega (|\varrho_1|^2+|\varrho_2|^2)\ \dq
	+\int_{T/2}^{3T/4}\int_\Omega( |\rho_1|^2+ |\rho_2|^2)\ \dq\right).
	\end{array}
	$$
	The functions $\widetilde{\varphi}$ and $\widetilde{\Theta}$ defined by \eqref{phitil} and \eqref{Thetatil}, respectively, have lower and upper bounds for $(t,x)\in  [0,T/2]\times \Omega$. Furthermore,
	due to the hypothesis \eqref{k}, we have that $a\in \mathcal{C}([0,1])$ and $a>0\ \mbox{in}\ (0,1]$. Therefore, there exist positive constants $\alpha_1$ and $\alpha_2$ such that $ a(x)\geq \alpha_1$ and $ \dis \frac{x^2}{a(x)}\geq \alpha_2$.
	Then, we introduce the corresponding weight functions in the above expression and we get
	\begin{equation}\label{pou}
	\begin{array}{llll}
	\dis \|\rho_1(0,\cdot)\|^2_{L^2(\Omega)}+\|\rho_2(0,\cdot)\|^2_{L^2(\Omega)}+\widetilde{\mathcal{I}}_{[0,T/2]}(\rho_1)+\widetilde{\mathcal{I}}_{[0,T/2]}(\rho_2)\\ \dis \leq C(\|b_1\|_{\infty}, \|b_2\|_{\infty}, \|d\|_{\infty},T)
	\left(\int_0^{3T/4}\int_\Omega (|\varrho_1|^2+|\varrho_2|^2)\ \dq
	+\int_{T/2}^{3T/4}\int_\Omega( |\rho_1|^2+ |\rho_2|^2)\ \dq\right),
	\end{array}
	\end{equation}
	where 
	\begin{equation}\label{car3}
	\widetilde{\mathcal{I}}_{[a,b]}(l)=\int_a^b\int_{\Omega}\widetilde{\Theta}^3\frac{x^2}{a(x)}e^{2s\widetilde{\varphi}}|l|^2\ \dq+\int_a^b\int_{\Omega}\widetilde{\Theta}a(x)e^{2s\widetilde{\varphi}}|l_x|^2\ \dq.
	\end{equation}
	Adding the term $\widetilde{\mathcal{I}}_{[0,T/2]}(\varrho_1)+\widetilde{\mathcal{I}}_{[0,T/2]}(\varrho_2)$ on both sides of inequality \eqref{pou}, we have
		\begin{equation}\label{pou1}
		\begin{array}{llll}
			\dis \|\rho_1(0,\cdot)\|^2_{L^2(\Omega)}+\|\rho_2(0,\cdot)\|^2_{L^2(\Omega)}+\widetilde{\mathcal{I}}_{[0,T/2]}(\rho_1)+\widetilde{\mathcal{I}}_{[0,T/2]}(\rho_2)+\widetilde{\mathcal{I}}_{[0,T/2]}(\varrho_1)+\widetilde{\mathcal{I}}_{[0,T/2]}(\varrho_2)\\ 
			\dis \leq C(\|b_1\|_{\infty}, \|b_2\|_{\infty}, \|d\|_{\infty},T)
			\left(\int_0^{3T/4}\int_\Omega (|\varrho_1|^2+|\varrho_2|^2)\ \dq
			+\int_{T/2}^{3T/4}\int_\Omega( |\rho_1|^2+ |\rho_2|^2)\ \dq\right)\\
			\dis +\widetilde{\mathcal{I}}_{[0,T/2]}(\varrho_1)+\widetilde{\mathcal{I}}_{[0,T/2]}(\varrho_2).
		\end{array}
	\end{equation}
	In order to eliminate the term $\widetilde{\mathcal{I}}_{[0,T/2]}(\varrho_1)+\widetilde{\mathcal{I}}_{[0,T/2]}(\varrho_2)$ in the right hand side of \eqref{pou1}, we use the classical energy estimates for the system \eqref{varrho} and we obtain:
	$$
	\begin{array}{llll}
		\dis \int_{0}^{T/2}\int_{\Omega}(|\varrho_1|^2+|\varrho_2|^2)\ dxdt+\int_{0}^{T/2}\int_{\Omega}a(x)(|\varrho_{1,x}|^2+|\varrho_{2,x}|^2)\ dxdt\\
		\dis \leq C(\|c_1\|_{\infty}, \|c_2\|_{\infty}, \|d\|_{\infty},T) \left(\frac{\alpha_1^2}{\mu_1^2}+\frac{\alpha_2^2}{\mu_2^2}\right)\int_{0}^{T/2}\int_\Omega |\rho_{*}^{-2}\rho_1|^2\ dxdt\\
		\dis \leq C(\|c_1\|_{\infty}, \|c_2\|_{\infty}, \|d\|_{\infty},T) \left(\frac{\alpha_1^2}{\mu_1^2}+\frac{\alpha_2^2}{\mu_2^2}\right)\int_{0}^{T/2}\int_\Omega e^{2s\varphi}|\rho_1|^2\ dxdt,
	\end{array}
	$$
	where $C$ is independent of $\mu_i,\ i=1,2$. The functions $\widetilde{\varphi}$ and $\widetilde{\Theta}$ have lower and upper bounds for $(t,x)\in  [0,T/2]\times \Omega$. Moreover, the function $\dis \frac{x^2}{a(x)}$ is non-decreasing on $(0;1]$ and $\dis \frac{x^2}{a(x)}\geq \alpha_2>0$ in $(0,1]$. Then, from the previous inequality and using the fact that $\dis e^{2s\varphi}\leq 1$ for all $(t,x)\in Q$, we obtain 
	\begin{equation}\label{pou2}
	\begin{array}{llll}
	\dis \widetilde{\mathcal{I}}_{[0,T/2]}(\varrho_1)+\widetilde{\mathcal{I}}_{[0,T/2]}(\varrho_2)\\
	\dis \leq C(\|c_1\|_{\infty)}, \|c_2\|_{\infty}, \|d\|_{\infty},T) \left(\frac{\alpha_1^2}{\mu_1^2}+\frac{\alpha_2^2}{\mu_2^2}\right)\int_{0}^{T/2}\int_\Omega \widetilde{\Theta}^3\frac{x^2}{a(x)}e^{2s\widetilde{\varphi}}|\rho_1|^2\ dxdt.
     \end{array}
	\end{equation}
	Replacing \eqref{pou2} in \eqref{pou1} and taking $\mu_i,\ i=1,2$ large enough, we obtain
	\begin{equation}\label{pou3}
		\begin{array}{llll}
			\dis \|\rho_1(0,\cdot)\|^2_{L^2(\Omega)}+\|\rho_2(0,\cdot)\|^2_{L^2(\Omega)}+\widetilde{\mathcal{I}}_{[0,T/2]}(\rho_1)+\widetilde{\mathcal{I}}_{[0,T/2]}(\rho_2)+\widetilde{\mathcal{I}}_{[0,T/2]}(\varrho_1)+\widetilde{\mathcal{I}}_{[0,T/2]}(\varrho_2)\\ 
			\dis \leq C(\|b_1\|_{\infty}, \|b_2\|_{\infty}, \|d\|_{\infty},T)
		\int_{T/2}^{3T/4}\int_\Omega ( |\rho_1|^2+ |\rho_2|^2+|\varrho_1|^2+|\varrho_2|^2)\ \dq.
		\end{array}
	\end{equation}
	The functions $\varphi$ and $\Theta$ defined in \eqref{functcarl} have the lower and upper bounds for $(t,x)\in  [T/2,3T/4]\times \Omega$. Moreover, the function $\dis \frac{x^2}{a(x)}$ is non-decreasing on $(0,1]$. Using \eqref{obser}, the relation \eqref{pou3} becomes
	\begin{equation}\label{pou4}
		\begin{array}{llll}
			&&\dis \|\rho_1(0,\cdot)\|^2_{L^2(\Omega)}+\|\rho_2(0,\cdot)\|^2_{L^2(\Omega)}+\widetilde{\mathcal{I}}_{[0,T/2]}(\rho_1)+\widetilde{\mathcal{I}}_{[0,T/2]}(\rho_2)+\widetilde{\mathcal{I}}_{[0,T/2]}(\varrho_1)+\widetilde{\mathcal{I}}_{[0,T/2]}(\varrho_2)\\ 
			&&\dis \leq\dis  C(\|b_1\|_{\infty}, \|b_2\|_{\infty}, \|d\|_{\infty},T)
			\sum_{i=1}^{2}\left(\mathcal{I}_{[T/2;3T/4]}(\rho_i)+\mathcal{I}_{[T/2;3T/4]}(\varrho_i)\right)\\
			&&\leq\dis C(C_4,\|b_1\|_{\infty}, \|b_2\|_{\infty}, \|d\|_{\infty},T)\int_{0}^{T}\int_{\omega}|\rho_1|^2\,\dq,
		\end{array}
	\end{equation}
	where $\mathcal{I}(\cdot)$ is defined by \eqref{I} and the constant $C_4$ is defined in the Proposition \ref{prop5}.
	
	On the other hand, since $\Theta=\widetilde{\Theta}$ and $\varphi=\tilde{\varphi}$ in $[T/2,T]\times \Omega$, we use again estimate \eqref{obser} and we obtain
	\begin{equation}\label{pou5}
	\begin{array}{llll}
	\dis \sum_{i=1}^{2}\widetilde{\mathcal{I}}_{[T/2,T]}(\rho_i)+\widetilde{\mathcal{I}}_{[T/2,T]}(\varrho_i)&\leq&\dis\sum_{i=1}^{2} \mathcal{I}_{[T/2,T]}(\rho_i)+\mathcal{I}_{[T/2,T]}(\varrho_i)\\
	&\leq&\dis C(C_4,\|b_1\|_{\infty}, \|b_2\|_{\infty}, \|d\|_{\infty},T)\int_{0}^{T}\int_{\omega}|\rho_1|^2\,\dq.
	\end{array}
	\end{equation}
	Adding \eqref{pou4} and \eqref{pou5}, we get
	\begin{equation*}\label{}
		\begin{array}{llll}
			\dis  \|\rho_1(0,\cdot)\|^2_{L^2(\Omega)}+\|\rho_2(0,\cdot)\|^2_{L^2(\Omega)}+\sum_{i=1}^{2}\widetilde{\mathcal{I}}_{[0,T]}(\rho_i)+\widetilde{\mathcal{I}}_{[0,T]}(\varrho_i)
			\dis \leq\dis C\int_{0}^{T}\int_{\omega}|\rho_1|^2\,\dq,
		\end{array}
	\end{equation*}
	 and then, we deduce the estimation \eqref{observ}.\\
	\textbf{Step 2.}
	Now, we prove that there exist a constant $C=C(C_4,\|b_1\|_{\infty}, \|b_2\|_{\infty},\|c_1\|_{\infty}, \|c_2\|_{\infty}, \|d\|_{\infty},\mu_1,\mu_2, T)>0$ and a positive weight function $\kappa$ such that
		\begin{eqnarray}\label{observ1}
		\sum_{i=1}^{2}\int_{Q}\kappa^2(|\psi_1^i|^2+|\psi_2^i|^2)\,\dT
		\leq C\int_{0}^{T}\int_{\omega}|\rho_1|^2\,\dq.
	\end{eqnarray}
	
	Let us introduce the function
	\begin{eqnarray}\label{berlin}
	\dis \hat{\varphi}(t)=\min_{x\in \Omega}\widetilde{\varphi}(t,x)
	\end{eqnarray}
and define the weight function $\kappa$  by:

\begin{eqnarray}\label{deftkappa}
	\dis \kappa(t)=e^{s\hat{\varphi}(t)}\in L^\infty(0,T).
\end{eqnarray}
	Then $\kappa$ is a positive function of class $\mathcal{C}^1$ on $[0,T)$. Furthermore, $\dis \frac{\partial \hat{\varphi}}{\partial t}$ is also a positive function on $(0,T)$. 
	Now, multiplying the first equation and the second equation of \eqref{psi} by $\kappa^2\psi_1^i$ and $\kappa^2\psi_2^i$, respectively and integrating by parts over $\Omega$, we obtain that 
	\begin{equation*}
	\begin{array}{rlll}
	\dis \frac{1}{2}\frac{d}{dt}\int_{\Omega} \kappa^2|\psi_1^i|^2\ dx+\int_{\Omega} \kappa^2a(x)|\psi_{1,x}^i|^2\ dx
	\dis &=&\dis -\int_{\Omega}\kappa^2\, c_1\,|\psi_1^i|^2\ dx
	\dis -\frac{1}{\mu_i}\int_{\omega_i} \kappa^2\rho_{*}^{-2}\rho_1\psi_1^i\ dx+s\dis \int_{\Omega} \kappa^2\frac{\partial\hat{\varphi}}{\partial t}|\psi_1^i|^2\ dx\\
	&\leq&\dis \left(\|c_1\|_{\infty}+\frac{1}{2}\right)\int_{\Omega} \kappa^2|\psi_1^i|^2\ dx+\frac{1}{2\mu_i^2}\int_{\omega_i} \kappa^2|\rho_1|^2\ dx
	\end{array}
	\end{equation*}
and
\begin{equation*}
 \begin{array}{rlll}
	\dis \frac{1}{2}\frac{d}{dt}\int_{\Omega} \kappa^2|\psi_2^i|^2\ dx+\int_{\Omega} \kappa^2a(x)|\psi_{2,x}^i|^2\ dx
	\dis &=&\dis-\int_{\Omega}\kappa^2\, c_2\,|\psi_2^i|^2\ dx
	\dis -\int_{\Omega}\kappa^2\, d\,|\psi_1^i \psi_2^i|^2\ dx+s\dis \int_{\Omega} \kappa^2\frac{\partial\hat{\varphi}}{\partial t}|\psi_2^i|^2\ dx\\
	&\leq&\dis \left(\|c_2\|_{\infty}+\frac{\|d\|^2_{\infty}}{2}\right)\int_{\Omega} \kappa^2|\psi_1^i|^2\ dx+\frac{1}{2\mu_i^2}\int_{\omega_i} \kappa^2|\psi_2^i|^2\ dx.
\end{array}
\end{equation*}
We obtain the two previous inequalities using the fact that $\dis \frac{\partial \hat{\varphi}}{\partial t}$ is a positive function on $[0,T)$. 
Adding the two previous inequalities, we obtain
\begin{equation*}
	\begin{array}{rlll}
		\dis \frac{d}{dt}\left(\int_{\Omega} \kappa^2(|\psi_1^i|^2+|\psi_2^i|^2)\ dx\right)
		\leq\dis C\int_{\Omega} \kappa^2(|\psi_1^i|^2+|\psi_2^i|^2)\ dx+\frac{1}{\mu_i^2}\int_{\omega_i} \kappa^2|\rho_1|^2\ dx,
	\end{array}
\end{equation*}
where $C=\left(\|c_1\|_{\infty}, \|c_2\|_{\infty}, \|d\|_{\infty}\right)$.
	Using Gronwall's Lemma and the fact that $\psi_1^i(x,0)=\psi_2^i(x,0)=0$ for $x\in \Omega$, we obtain that
	\begin{equation}\label{berlin2}
	\begin{array}{rlll}
	\dis \int_{\Omega} \kappa^2(|\psi_1^i|^2+|\psi_2^i|^2)\ dx\leq C\int_Q  \kappa^2|\rho_1|^2\ dx,\ \forall t\in [0,T],
	\end{array}
	\end{equation}
where $C=\left(\|c_1\|_{\infty}, \|c_2\|_{\infty}, \|d\|_{\infty}, T,\mu_1,\mu_2\right)$.
Using the definition of $\hat{\varphi}$ and $\kappa$ given by \eqref{berlin} and \eqref{deftkappa}, respectively, we have
\begin{equation}\label{ten}
	\kappa^2(t)\leq e^{2s\widetilde{\varphi}(t,x)},\ \ \forall x\in \Omega.
\end{equation}
Thanks to the fact that $\dis \widetilde{\Theta}^{-1}\in L^\infty(0,T)$ and that the function $\dis \frac{a(x)}{x^2}$ is non-decreasing on $(0,1]$, using \eqref{ten}, we have
	\begin{equation*}
	\begin{array}{rlll}
	\dis \int_{Q} \kappa^2(|\psi_1^i|^2+|\psi_2^i|^2)\ dx\leq \int_{Q} \widetilde{\Theta}^3\frac{x^2}{a(x)}e^{2s\widetilde{\varphi}}|\rho_1|^2\ \dT,
	\end{array}
	\end{equation*}
	which combining with \eqref{berlin2} and  \eqref{observ} yields
	\begin{equation*}
	\begin{array}{rlll}
	\dis \int_{Q} \kappa^2(|\psi_1^i|^2+|\psi_2^i|^2)\ dx\leq C\int_0^T\int_{\omega} |\rho_1|^2 \dT,
	\end{array}
	\end{equation*}
	where $C=\left(C_4,\|b_1\|_{\infty}, \|b_2\|_{\infty},\|c_1\|_{\infty}, \|c_2\|_{\infty}, \|d\|_{\infty}, T,\mu_1,\mu_2\right)$.
	Adding this latter inequality with \eqref{observ}, we deduce \eqref{obser3}.	
\end{proof}

\section{Null controllability problem}\label{null}	

In this section, we will end the proof of Theorem \ref{theolinear}. The proof is inspired by well-known results on the controllability of non linear systems where controllability of linear systems and suitable fixed point arguments are the main ingredients.

\subsection{Null controllability of an auxiliary linear system }
Here, we prove the null controllability of a linearized version of \eqref{yop}-\eqref{pop}. In fact, for given  $b_1,b_2,c_1,c_2,d\in L^\infty(Q)$, we consider the linear systems
\begin{equation}\label{ylin}
	\left\{
	\begin{array}{rllll}
		\dis y_{1,t}-\left(a(x)y_{1,x}\right)_{x}+b_1y_1  &=&\dis h\chi_{\omega}-\frac{1}{\mu_1}\rho_{*}^{-2}p_1^1\chi_{\omega_1}-\frac{1}{\mu_2}\rho_{*}^{-2}p_1^2\chi_{\omega_2}& \mbox{in}& Q,\\
		\dis y_{2,t}-\left(a(x)y_{2,x}\right)_{x}+b_2y_2+dy_1  &=&0& \mbox{in}& Q,\\
		\dis y_1(t,0)=y_1(t,1)=y_2(t,0)=y_2(t,1)&=&0& \mbox{on}& (0,T), \\
		\dis y_1(0,\cdot)=y_1^0,\ \ y_2(0,\cdot)=y_2^0&& &\mbox{in}&\Omega
	\end{array}
	\right.
\end{equation}
and 
\begin{equation}\label{plin}
	\left\{
	\begin{array}{rllll}
		\dis -p_{1,t}^i-\left(a(x)p^i_{1,x}\right)_{x}+c_1p_1^i+dp_2^i   &=&\alpha_i\left(y_1-y_{1,d}^i\right)\chi_{\omega_{d}}& \mbox{in}& Q,\\
		\dis -p_{2,t}^i-\left(a(x)p^i_{2,x}\right)_{x}+c_2p_2^i  &=&\alpha_i\left(y_2-y_{2,d}^i\right)\chi_{\omega_{d}}& \mbox{in}& Q,\\
		\dis p_1^i(t,0)=p_1^i(t,1)=p_2^i(t,0)=p_2^i(t,1)&=&0& \mbox{on}& (0,T), \\
		\dis p_1^i(T,\cdot)= p_2^i(T,\cdot)&=&0 &\mbox{in}&\Omega,
	\end{array}
	\right.
\end{equation}
and the corresponding adjoint systems \eqref{psi}-\eqref{rhobis}.
Thanks to the Proposition \ref{pro}, we will able to prove the null controllability of \eqref{ylin}-\eqref{plin}. We have the following result.

\begin{prop}\label{linear}$ $
	
	Suppose that \eqref{od} holds, the $\mu_i,\ i=1,2$ are large enough, $a(\cdot)$ satisfies \eqref{k} and the coefficients $b_1$, $b_2$, $c_1$, $c_2$ and $d$ belong to $L^\infty(Q)$. If the condition \eqref{d} is satisfied, then for any $y^0\in [L^2(\Omega)]^2$ and $\kappa^{-2}y^i_{j,d}\in L^2(Q)$, there exists a control $\bar{h}\in L^2(\omega_T)$ such that the corresponding solutions to \eqref{ylin} and \eqref{plin} satisfies \eqref{mainobj}. Furthermore, there exists a positive constant $C$ depending on 
	$C_4,\|b_1\|_{\infty}, \|b_2\|_{\infty},\|c_1\|_{\infty},$ $\|c_2\|_{\infty}, \|d\|_{\infty},\mu_1,\mu_2$ and $T$ such that
	\begin{equation}\label{mon10theo}
		\begin{array}{ccc}
			\dis \|\bar{h}\|_{L^2(\omega_T)}&\leq&C
			\dis  \left(\alpha_1^2\sum_{i=1}^2\left\|\kappa^{-1} y^1_{i,d}\right\|^2_{L^2(\omega_{d,T})}+\alpha_2^2\sum_{i=1}^2\left\|\kappa^{-1} y^2_{i,d}\right\|^2_{L^2(\omega_{d,T})}+ \|y_1^0\|^2_{L^2(\Omega)}+ \|y_2^0\|^2_{L^2(\Omega)}\right)^{1/2}.
		\end{array}
	\end{equation}
\end{prop}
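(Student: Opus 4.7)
The plan is to prove this via a classical duality/penalization argument, building on the observability inequality \eqref{obser3} established in Proposition \ref{pro}. First, I would derive the fundamental duality identity relating the forward-backward linear system \eqref{ylin}-\eqref{plin} to its adjoint \eqref{rho}-\eqref{psi}. Multiplying \eqref{ylin} by $(\rho_1,\rho_2)$ and each equation of \eqref{plin} by $(\psi_1^i,\psi_2^i)$, integrating by parts over $Q$, and summing, the cross-terms $d\,p_2^i\psi_1^i$, $d\,y_1\rho_2$ and the terms of the form $\frac{1}{\mu_i}\rho_*^{-2}p_1^i\rho_1\chi_{\omega_i}$ cancel pairwise, leaving
\begin{equation*}
\int_\Omega y(T)\cdot\rho^T\,dx=\int_0^T\!\!\int_\omega h\rho_1\,dxdt+\int_\Omega y^0\cdot\rho(0)\,dx-\sum_{i=1}^2\alpha_i\int_0^T\!\!\int_{\omega_d}(\psi_1^iy_{1,d}^i+\psi_2^iy_{2,d}^i)\,dxdt
\end{equation*}
for every $\rho^T\in[L^2(\Omega)]^2$. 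Thus $y(T)=0$ is equivalent to the requirement that the right-hand side (with $h=\bar h$) vanishes for every $\rho^T$.

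Next, I would introduce, for each $\varepsilon>0$, the penalized functional on $[L^2(\Omega)]^2$
\begin{equation*}
J_\varepsilon(\rho^T)=\frac{1}{2}\int_0^T\!\!\int_\omega|\rho_1|^2\,dxdt+\frac{\varepsilon}{2}\|\rho^T\|^2_{[L^2(\Omega)]^2}+\int_\Omega y^0\cdot\rho(0)\,dx-\sum_{i=1}^2\alpha_i\int_0^T\!\!\int_{\omega_d}(\psi_1^iy_{1,d}^i+\psi_2^iy_{2,d}^i)\,dxdt.
\end{equation*}
Continuity and strict convexity are routine, and coercivity follows from the $\varepsilon$-term, so $J_\varepsilon$ admits a unique minimizer $\hat\rho^T_\varepsilon$. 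Writing its Euler-Lagrange equation and comparing with the duality identity (applied with the candidate control $h_\varepsilon:=\hat\rho_{1,\varepsilon}\chi_\omega$) yields $y_\varepsilon(T)=-\varepsilon\hat\rho^T_\varepsilon$. The key estimate is obtained from $J_\varepsilon(\hat\rho^T_\varepsilon)\le J_\varepsilon(0)=0$: applying Cauchy-Schwarz to the terms involving $y_d^i$ and $y^0$, then bounding $\int\kappa^2|\psi_j^i|^2$ and $\|\rho(0)\|^2$ by $C\|h_\varepsilon\|^2_{L^2(\omega_T)}$ via the observability inequality \eqref{obser3}, gives
\begin{equation*}
\tfrac{1}{2}\|h_\varepsilon\|^2_{L^2(\omega_T)}+\tfrac{\varepsilon}{2}\|\hat\rho^T_\varepsilon\|^2\le C\|h_\varepsilon\|_{L^2(\omega_T)}\Big(\sum_{i,j}\alpha_i\|\kappa^{-1}y_{j,d}^i\|_{L^2(\omega_{d,T})}+\|y^0\|_{L^2(\Omega)}\Big),
\end{equation*}
from which the $\varepsilon$-uniform bound \eqref{mon10theo} follows, and additionally $\|y_\varepsilon(T)\|=\varepsilon\|\hat\rho^T_\varepsilon\|\le\sqrt{\varepsilon}\cdot C\to 0$.

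Finally, I would pass to the limit. Extract a subsequence with $h_\varepsilon\rightharpoonup\bar h$ weakly in $L^2(\omega_T)$; using the well-posedness and continuous dependence of the coupled forward-backward system \eqref{ylin}-\eqref{plin} (which holds for $\mu_i$ large enough, exactly as in the derivation of the Nash quasi-equilibrium in Section \ref{preliminary}), the associated states $(y_\varepsilon,p_\varepsilon^i)$ converge weakly to $(\bar y,\bar p^i)$, the solution with control $\bar h$. Strong convergence $y_\varepsilon(T)\to 0$ in $[L^2(\Omega)]^2$ then gives $\bar y(T)=0$, and lower semicontinuity of the $L^2$-norm under weak convergence yields \eqref{mon10theo} for $\bar h$. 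The delicate point, which I expect to be the main obstacle, is the well-posedness and weak stability of the coupled forward-backward system under only weak convergence of the control: this rests crucially on the smallness factor $1/\mu_i$ in the coupling term $\mu_i^{-1}\rho_*^{-2}p_1^i\chi_{\omega_i}$ of \eqref{ylin}, so one has to invoke the assumption that $\mu_i$ is sufficiently large to close the fixed-point/contraction argument proving existence and uniqueness for \eqref{ylin}-\eqref{plin} with $L^2$ right-hand side.
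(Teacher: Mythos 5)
Your proof is correct and follows essentially the same route as the paper: an $\varepsilon$-penalization combined with the duality identity and the observability inequality \eqref{obser3}, followed by a weak-limit passage. The only difference is which side of the duality carries the minimization — you minimize the penalized dual functional over the adjoint terminal data $\rho^T$, while the paper minimizes the primal functional $J_\varepsilon(h)=\frac{1}{2\varepsilon}\|y(T,\cdot)\|^2_{[L^2(\Omega)]^2}+\frac{1}{2}\|h\|^2_{L^2(\omega_T)}$ over the control; both yield the identical optimality system $h_\varepsilon=\rho_{1\varepsilon}$ on $\omega_T$ with $\rho_\varepsilon(T,\cdot)=-\frac{1}{\varepsilon}y_\varepsilon(T,\cdot)$ and the same uniform estimates.
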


\begin{proof}

To prove this null controllability problem, we proceed in three steps using a penalization method.\\
\noindent\textbf{Step 1.} For any $\varepsilon >0$, we consider the following cost function:
\begin{equation}\label{defJ}
	J_{\varepsilon}(h)=\dis \frac{1}{2\varepsilon}\int_{\Omega}\left(|y_1(T,\cdot)|^2+|y_2(T,\cdot)|^2\right)\ dx+
	\frac{1}{2}\int_{\omega_T}|h|^2\ \dq.
\end{equation}
Then we consider the optimal control problem: 
\begin{equation}\label{opt}
	\inf_{\atop h\in L^2(\omega_T)}J_{\varepsilon}(h).
\end{equation}
We can prove that $J_{\varepsilon}$ is continuous, coercive and strictly convex. Then, the optimization problem \eqref{opt}  admits a unique solution $h_\varepsilon$  and arguing as in \cite{djomegne2021}, we prove that 
\begin{equation}\label{hgeps}
	h_{\varepsilon}=\rho_{1\varepsilon}\ \ \mbox{in}\ \ \omega_T,
\end{equation}	
with $(\rho_\varepsilon,\psi^i_\varepsilon)$ is the solution of the following systems
\begin{equation}\label{rhoeps}
	\left\{
	\begin{array}{rllll}
		\dis -\rho_{1\varepsilon,t}-\left(a(x)\rho_{1\varepsilon,x}\right)_{x}+b_1\rho_{1\varepsilon}+d\rho_{2\varepsilon}  &=&\dis (\alpha_1\psi_{1\varepsilon}^1+\alpha_2\psi_{1\varepsilon}^2)\chi_{\omega_{d}}& \mbox{in}& Q,\\
		\dis -\rho_{2\varepsilon,t}-\left(a(x)\rho_{2\varepsilon,x}\right)_{x}+b_2\rho_{2\varepsilon}  &=&\dis (\alpha_1\psi_{2\varepsilon}^1+\alpha_2\psi_{2\varepsilon}^2)\chi_{\omega_{d}}& \mbox{in}& Q,\\
		\dis \rho_{1\varepsilon}(t,0)=\rho_{1\varepsilon}(t,1)=\rho_{2\varepsilon}(t,0)=\rho_{2\varepsilon}(t,1)&=&0& \mbox{on}& (0,T), \\
		\dis \rho_{1\varepsilon}(T,\cdot)=-\dis \frac{1}{\varepsilon}y_{1\varepsilon} (T,\cdot)\ \ \rho_{2\varepsilon}(T,\cdot)=-\dis \frac{1}{\varepsilon}y_{2\varepsilon} (T,\cdot)&& &\mbox{in}&\Omega
	\end{array}
	\right.
\end{equation}
and
\begin{equation}\label{psieps}
	\left\{
	\begin{array}{rllll}
		\dis \psi_{1\varepsilon,t}^i-\left(a(x)\psi^i_{1\varepsilon,x}\right)_{x}+c_1\psi_{1\varepsilon}^i   &=&\dis-\frac{1}{\mu_i}\rho_{*}^{-2}\rho_{1\varepsilon}\chi_{\omega_{i}}& \mbox{in}& Q,\\
		\dis \psi_{2\varepsilon,t}^i-\left(a(x)\psi^i_{2\varepsilon,x}\right)_{x}+c_2\psi_{2\varepsilon}^i+d\psi_{1\varepsilon}^i   &=&0& \mbox{in}& Q,\\
		\dis \psi_{1\varepsilon}^i(t,0)=\psi_{1\varepsilon}^i(t,1)=\psi_{2\varepsilon}^i(t,0)=\psi_{2\varepsilon}^i(t,1)&=&0& \mbox{on}& (0,T), \\
		\dis \psi_{1\varepsilon}^i(0,\cdot)= \psi_{2\varepsilon}^i(0,\cdot)&=&0 &\mbox{in}&\Omega,
	\end{array}
	\right.
\end{equation}
where $(y_\varepsilon, p^i_\varepsilon)$ is the solution of systems \eqref{ylin}-\eqref{plin} associated to the control $h_\varepsilon$.\\
\textbf{Step 2.} 
If we  multiply  the first and the second equation of \eqref{rhoeps} by $y_{1\varepsilon}$ and $y_{2\varepsilon}$ respectively, then we  multiply  the first and the second equation of \eqref{psieps} by $p^i_{1\varepsilon}$ and $p^i_{2\varepsilon}$ respectively, we integrate by parts over $Q$ and add the different equations, we obtain from \eqref{hgeps}
$$
\begin{array}{rlll}
\dis \|h_\varepsilon\|^2_{L^2(\omega_T)}+\frac{1}{\varepsilon}\|y_{1\varepsilon}(T,\cdot)\|^2_{L^2(\Omega)}+\frac{1}{\varepsilon}\|y_{2\varepsilon}(T,\cdot)\|^2_{L^2(\Omega)}
=\dis-\int_{\Omega}y_1^0\rho_{1\varepsilon}(0,\cdot)\ dx-\int_{\Omega}y_2^0\rho_{2\varepsilon}(0,\cdot)\ dx+\\
\dis \alpha_1\sum_{i=1}^2\int_{\omega_{d,T}}y^1_{i,d}\psi^1_{i\varepsilon} \dT+\alpha_2\sum_{i=1}^2\int_{\omega_{d,T}}y^2_{i,d}\psi^2_{i\varepsilon} \dT.	
\end{array}
$$
Using the Young inequality, we obtain 
\begin{equation}\label{rosnymon10}
	\begin{array}{ccc}
		&&\dis \|h_\varepsilon\|^2_{L^2(\omega_T)}+\frac{1}{\varepsilon}\|y_{1\varepsilon}(T,\cdot)\|^2_{L^2(\Omega)}+\frac{1}{\varepsilon}\|y_{2\varepsilon}(T,\cdot)\|^2_{L^2(\Omega)}\\
		&&\leq
		\dis  \left(\alpha_1^2\sum_{i=1}^2\left\|\kappa^{-1} y^1_{i,d}\right\|^2_{L^2(\omega_{d,T})}+\alpha_2^2\sum_{i=1}^2\left\|\kappa^{-1} y^2_{i,d}\right\|^2_{L^2(\omega_{d,T})}+ \|y_1^0\|^2_{L^2(\Omega)}+ \|y_2^0\|^2_{L^2(\Omega)}\right)^{1/2}\times\\ 
		&&\dis \left( \sum_{i=1}^2 \left\|\kappa\psi_{i\varepsilon}^1\right\|^2_{L^2(Q)}+\sum_{i=1}^2 \left\|\kappa\psi_{i\varepsilon}^2\right\|^2_{L^2(Q)}+
		\|\rho_{1\varepsilon}(0,\cdot)\|^2_{L^2(\Omega)}+\|\rho_{2\varepsilon}(0,\cdot)\|^2_{L^2(\Omega)}\right)^{1/2}.
	\end{array}
\end{equation}
Using the observability inequality \eqref{obser3}, we deduce from \eqref{rosnymon10} the existence of a positive constant $C$ depending on $C_4,\|b_1\|_{\infty}, \|b_2\|_{\infty},\|c_1\|_{\infty}, \|c_2\|_{\infty},$ $\|d\|_{\infty},\mu_1,\mu_2$ and $T$ such that
\begin{equation}\label{mon10}
	\begin{array}{ccc}
		\dis \|h_\varepsilon\|_{L^2(\omega_T)}\leq C
		\dis  \left(\alpha_1^2\sum_{i=1}^2\left\|\kappa^{-1} y^1_{i,d}\right\|^2_{L^2(\omega_{d,T})}+\alpha_2^2\sum_{i=1}^2\left\|\kappa^{-1} y^2_{i,d}\right\|^2_{L^2(\omega_{d,T})}+ \|y_1^0\|^2_{L^2(\Omega)}+ \|y_2^0\|^2_{L^2(\Omega)}\right)^{1/2},
	\end{array}
\end{equation}
\begin{equation}\label{mon10a}
	\begin{array}{ccc}
		\dis \|y_{1\varepsilon}(T,\cdot)\|_{L^2(\Omega)}\leq C
		\dis  \sqrt{\varepsilon}\left(\alpha_1^2\sum_{i=1}^2\left\|\kappa^{-1} y^1_{i,d}\right\|^2_{L^2(\omega_{d,T})}+\alpha_2^2\sum_{i=1}^2\left\|\kappa^{-1} y^2_{i,d}\right\|^2_{L^2(\omega_{d,T})}+ \|y_1^0\|^2_{L^2(\Omega)}+ \|y_2^0\|^2_{L^2(\Omega)}\right)^{1/2}
	\end{array}
\end{equation}
and
\begin{equation}\label{mon10b}
	\begin{array}{ccc}
		\dis \|y_{2\varepsilon}(T,\cdot)\|_{L^2(\Omega)}\leq C
		\dis  \sqrt{\varepsilon}\left(\alpha_1^2\sum_{i=1}^2\left\|\kappa^{-1} y^1_{i,d}\right\|^2_{L^2(\omega_{d,T})}+\alpha_2^2\sum_{i=1}^2\left\|\kappa^{-1} y^2_{i,d}\right\|^2_{L^2(\omega_{d,T})}+ \|y_1^0\|^2_{L^2(\Omega)}+ \|y_2^0\|^2_{L^2(\Omega)}\right)^{1/2}.
	\end{array}
\end{equation}
Using \eqref{mon10}-\eqref{mon10b} and systems \eqref{ylin}-\eqref{plin} associated to the control $h_\varepsilon$ given by \eqref{hgeps}, we can extract subsequences still denoted by $h_\varepsilon,\ y_\varepsilon $ and $p^i_\varepsilon$ such that when $\varepsilon \rightarrow 0$, we have
\begin{subequations}\label{convergence2}
	\begin{alignat}{9}
		h_\varepsilon&\rightharpoonup& \bar{h}&\text{ weakly in }&L^{2}(\omega_T), \label{18k}\\
		y_{i\varepsilon}&\rightharpoonup& y_i&\text{  weakly in }&L^2((0,T);H^1_a(\Omega)),\ i=1,2, \label{}\\
		p^i_{j\varepsilon}&\rightharpoonup& p^i_j&\text{  weakly in }&L^2((0,T);H^1_a(\Omega)),\ i,j=1,2, \label{19}\\
		y_{i\varepsilon}(T,\cdot)&\longrightarrow&0&\hbox{ strongly in }&\ L^2(\Omega),\ i=1,2.\label{all5}
	\end{alignat}
\end{subequations}

Arguing as in \cite{djomegne2018,djomegne2021}, using convergences \eqref{convergence2}, we prove that $(y,\ p^i)$ is a solution of \eqref{ylin}-\eqref{plin} corresponding to the control $\bar{h}$ and also $y$ satisfies \eqref{mainobj}. Furthermore, using the convergence \eqref{18k}, we have that $\bar{h}$ satisfies \eqref{mon10theo}.

\end{proof}

\subsection{Proof of Theorem \ref{theolinear}}
We have proved in Proposition \ref{quasi} and Theorem \ref{nash} that the Nash equilibrium for $(J_1,J_2)$ given by \eqref{all16}, $(\hat{v}_1, \hat{v}_2)$ is characterised by \eqref{vop}-\eqref{pop}. In Proposition \ref{linear}, we proved that the linear systems \eqref{ylin}-\eqref{plin} is null controllable at time $t=T$. We are now going to prove that, there exists a control $\bar{h}\in L^2(\omega_T)$ such that the solution of \eqref{yop}-\eqref{pop} satisfies \eqref{mainobj}.

We define $W=[L^2((0,T);H^1_{a}(\Omega))]^2$. For every $w\in W$, we consider the linearized system for \eqref{yop}-\eqref{pop} 
\begin{equation}\label{ylina}
	\left\{
	\begin{array}{rllll}
		\dis y_{1,t}-\left(a(x)y_{1,x}\right)_{x}+b_1^{w}y_1  &=&\dis h\chi_{\omega}-\frac{1}{\mu_1}\rho_{*}^{-2}p_1^1\chi_{\omega_1}-\frac{1}{\mu_2}\rho_{*}^{-2}p_1^2\chi_{\omega_2}& \mbox{in}& Q,\\
		\dis y_{2,t}-\left(a(x)y_{2,x}\right)_{x}+b_2^{w}y_2+dy_1  &=&0& \mbox{in}& Q,\\
		\dis y_1(t,0)=y_1(t,1)=y_2(t,0)=y_2(t,1)&=&0& \mbox{on}& (0,T), \\
		\dis y_1(0,\cdot)=y_1^0,\ \ y_2(0,\cdot)=y_2^0&& &\mbox{in}&\Omega
	\end{array}
	\right.
\end{equation}
and 
\begin{equation}\label{plina}
	\left\{
	\begin{array}{rllll}
		\dis -p_{1,t}^i-\left(a(x)p^i_{1,x}\right)_{x}+c_1^{w}p_1^i+dp_2^i   &=&\alpha_i\left(y_1-y_{1,d}^i\right)\chi_{\omega_{i,d}}& \mbox{in}& Q,\\
		\dis -p_{2,t}^i-\left(a(x)p^i_{2,x}\right)_{x}+c_2^{w}p_2^i  &=&\alpha_i\left(y_2-y_{2,d}^i\right)\chi_{\omega_{i,d}}& \mbox{in}& Q,\\
		\dis p_1^i(t,0)=p_1^i(t,1)=p_2^i(t,0)=p_2^i(t,1)&=&0& \mbox{on}& (0,T), \\
		\dis p_1^i(T,\cdot)= p_2^i(T,\cdot)&=&0 &\mbox{in}&\Omega,
	\end{array}
	\right.
\end{equation}
where 
\begin{equation}
	b_i^{w}=\int_{0}^{1}F_i^\prime(\sigma w_i)d\sigma,\ \ c_i^{w}=F_i^\prime(w_i),\ \ i=1,2.	
\end{equation}
Observe that systems \eqref{ylina}-\eqref{plina} are of the form \eqref{ylin}-\eqref{plin} with $\dis b_i=b_i^{w}=\int_{0}^{1}F_i^\prime(\sigma w_i)d\sigma,\ \ i=1,2$ and $c_i=c_i^{w}=F_i^\prime(w_i),\ \ i=1,2$. 

Thanks to the hypothesis of $F_1$ and $F_2$ given by \eqref{lip}, we have that $b_1^{w},b_2^{w},c_1^{w}$ and $c_2^{w}$ belong to $L^\infty(Q)$. In view of Proposition \ref{linear}, there exits a control $\bar{h}(w)\in L^2(\omega_T)$ such that the solution $(y, p^i)$ to \eqref{ylina}-\eqref{plina} with $\bar{h}=\bar{h}(w)$ satisfies \eqref{mainobj}.

Combining \eqref{esty1y2}, \eqref{mon10theo} and \eqref{v}, we obtain
\begin{equation}\label{y}
		\dis \|y_1\|_{L^2((0,T); H^1_a(\Omega))}+\|y_2\|_{L^2((0,T); H^1_a(\Omega))}
		\leq 
		C\left(\|y_1^0\|_{L^2(\Omega)}+\|y_2^0\|_{L^2(\Omega)}\right).
\end{equation} 	

For every $w\in W$, we define

\begin{equation}
I(w)=\left\{\bar{h}\in L^2(\omega_T),\ (y,p^i)\ \mbox{solution of}\ \eqref{ylina}-\eqref{plina}\ \mbox{satisfies}\ \eqref{mainobj}\ \mbox{with}\ \bar{h}\ \mbox{verifying}\ \eqref{mon10theo}\right\}	
\end{equation}
and 
\begin{equation}
	\Lambda(w)=\left\{(y,p^i):\ (y,p^i)\ \mbox{is the state associated to a control}\ \bar{h}\in I(w)\ \mbox{and}\ (y,p^i)\ \mbox{satisfies}\ \eqref{y}\right\}.	
\end{equation}
In this way, we introduce a multivalued mapping
$$
w\longmapsto \Lambda(w).
$$
We want to prove that this mapping has a fixed point $y$. Of course, this will imply that there exists a control $\bar{h}\in L^2(\omega_T)$ such that the solution of \eqref{yop}-\eqref{pop} satisfies \eqref{mainobj}.

To this end, we will use the Kakutani's fixed point Theorem that can be applied on $\Lambda$. Proceeding as in \cite[Theorem 1.3]{danynina2021} or \cite[Theorem 1.1]{birba2016}, we can prove the following properties for every $w\in W$:

\begin{itemize}
	\item $\Lambda(w)$ is a non empty, closed and convex set of $W$.
	\item $\Lambda(w)$ is a bounded and compact set of $W$.
	\item The application $w\longmapsto\Lambda(w)$ is upper hemi-continuous. 
\end{itemize}
This end the proof of Theorem \ref{theolinear} and furthermore the proof of null controllability of system \eqref{eq}.	\hfill $\blacksquare$

\section{Conclusion remarks}  \label{conclusion}

We proved the Stackelberg-Nash null controllability of a coupled degenerate non linear parabolic equations with one leader and two followers. Since our functionals are not convex because the system is non linear, we considered first the Nash quasi-equilibrium. In the first time, for each leader fixed, we have proved the existence, uniqueness and characterization of Nash quasi-equilibrium. Then using some hypothesis, we have showed the equivalence between Nash quasi-equilibrium and Nash equilibrium. By suitable Carleman estimates, we have established an observability inequality which is the key to deduce our controllability result. As future direction, we will extend the results obtained in this paper to a more general system of $m$ cascade coupled parabolic degenerate PDEs as in \cite{Burgos2010}.

\section*{Acknowledgments} 
We would like to thank the reviewers and the editors for their valuable comments and suggestions which helped us to improve significantly the paper.

\section*{Funding}
The first author was supported by the German Academic Exchange Service (D.A.A.D) under the Scholarship Program PhD AIMS-Cameroon. The second  author was supported by a grant from the African Institute for Mathematical Sciences, www.nexteinstein.org, with financial support from the Government of Canada, provided through Global Affairs Canada, www.international.gc.ca, and the International Development Research Centre, www.idrc.ca.


\begin{thebibliography}{99}

\bibitem{maniar2011} E. M. Ait Ben Hassi, F. Ammar Khodja, A. Hajjaj, L. Maniar: \textit{Null controllability of degenerate parabolic cascade systems}. Portugal Math. \textbf{68}, 345-367 (2011).

\bibitem{alabau2006} F. Alabau-Boussouira, P. Cannarsa, G. Fragnelli: \textit{ Carleman estimates for degenerate parabolic operators with applications to null controllability}. J Evol Equ. \textbf{6}(2), 161-206 (2006).
	
\bibitem{araruna2015anash} F. D. Araruna, E. Fern\`{a}ndez- Cara, M. C. Santos: \textit{Stackelberg-Nash exact controllability for linear and semilinear parabolic equations}.
ESAIM: Control, Optim. and Calc. Var. \textbf{21}(3), {835--856} (2015).

\bibitem{santos2017} F. D. Araruna, E. Fern\`{a}ndez- Cara, M. C. Santos: \textit{News results on the Stackelberg-Nash exact control of linear parabolic equations} Syst. Control. Lett. \textbf{104}, 78-85 (2017).

\bibitem{araruna2018stackelberg} F. D. Araruna, B. S. V. Ara\`{u}jo, E. Fern\`{a}ndez- Cara: \textit{Stackelberg-Nash null controllability for some linear and semilinear degenerate parabolic equations}.
Math. Control Optim Signals Systems. \textbf{30}(3), {14} (2018).

\bibitem{ararunawave2018} F. D. Araruna, E. Fern\`{a}ndez- Cara, L. C. da. Silva: \textit{Hierarchical control for the wave equation}. J Optim Theory Appl. \textbf{178}, 264-288 (2018).

\bibitem{araruna2019boundary} F. D. Araruna, E. Fern\`{a}ndez- Cara, L. C. da. Silva: \textit{Hierarchical exact controllability of semilinear parabolic equations with distributed and boundary controls}. Commun. Contemp. Math. \textbf{22}(7), https://doi.org/10.1142/S0219199719500342, (2019).

\bibitem{birba2016} M. Birba, O. Traor\'e: \textit{Controllability of nonlinear degenerate parabolic cascade systems}. Electronic Journal of Differential Equations. \textbf{2016}(219), 1-25 (2016).

\bibitem{omar2021}  I. Boutaayamou, L. Maniar, O. Oukdach: Stackelberg-Nash null controllability of a heat equation with general dynamic boundary conditions. \textit{Evolution Equations and Control Theory}. doi: 10.3934/eect.2021044, (2021).

\bibitem{cannarsa2005} P. Cannarsa, P. Martinez, J. Vancostenoble: \textit{ Null controllability of degenerate heat equations}. Adv. Differential Equations. \textbf{10}(2),  153-190 (2005).

\bibitem{cannarsa2008} P. Cannarsa, P. Martinez, J. Vancostenoble: \textit{ Carleman estimates for a class of degenerate parabolic equations}. SIAM J Control Optim. \textbf{47}(1), 1-19 (2008).

\bibitem{cannarsa2016} P. Cannarsa, P. Martinez, J. Vancostenoble: \textit{ Global Carleman estimates for degenerate parabolic operators with applications}. Americam Mathematical Society, Providence. \textbf{239} (2016).

\bibitem{santos2019} N. Carre\~{n}o, M. C. Santos: \textit{Stackelberg-Nash exact controllability for the Kuramoto-Sivashinsky equation}. J Differential Equations. \textbf{266}, 6068-6108 (2019).

	
\bibitem{djomegnebackward} L. L. Djomegne Njoukou\'e, G. Mophou, G. Deugou\'e: \textit{Stackelberg control of a backward linear heat equation}. Advance in Evolution Equations: Evolutionary Processes and Applications. \textbf{10}, 127-149 (2019).

\bibitem{djomegnelinear} L. L. Djomegne Njoukou\'e, G. Deugou\'e:       \textit{Stackelberg control in an unbounded domain for a parabolic equation}. Journal of Nonlinear Evolution Equations and Applications. \textbf{2021}(5), 95-118 (2021).

\bibitem{djomegne2021}  L. L. Djomegne Njoukou\'e. Hierarchic control for a nonlinear parabolic equation in an unbounded domain. \textit{Applicable Analysis}, 1-34. DOI: 10.1080/00036811.2021.1991327.(2021).

\bibitem{floridia2015}{G. Floridia}: \textit{Well-posedness for a class of nonlinear degenerate parabolic equations}. arXiv preprint. (2015). https://arxiv.org/abs/1509.05510.

\bibitem{romario2018} R. G. Foko Tiomela, G. Mophou, G. N'gu\'er\'ekata: \textit{Hierarchic control of a linear heat equation with missing data}. Math. Appl. Sci. \textbf{43}(10), 1-22 (2020).

\bibitem{FursikovImanuvilov} A. V. Fursikov,  Y. O. Imanuvilov: \textit{Controllability of evolution equations}. Lecture Notes Series, Research Institute of Mathematics. Seoul National University, Seoul, Korea. \textbf{34} (1996).

\bibitem{Burgos2010} M. Gonz{\'a}lez-Burgos, L. de Teresa:  \textit{Controllability results for cascade systems of m coupled parabolic PDEs by one control force.} Portugal. Math. \textbf{67}(1), 91-113 (2010).

\bibitem{teresacoupled}   V. Hern\'{a}ndez Santamar\'ia, de Teresa, L., A. Poznyak. Hierarchic control for a coupled parabolic system. \textit{Portugaliae Math}. \textbf{73}(2). 115-137, (2016).

\bibitem{teresacorrig}  V. Hern\'{a}ndez Santamar\'ia, L. de Teresa, A. Poznyak. corrigendum and addendum to "Hierarchic control for a coupled parabolic system". \textit{Portugaliae Math}. \textbf{73}. 115-137 (2016). \textit{Portugaliae Math}. \textbf{74}. 161-168 (2017).

\bibitem{teresa} V. Hern\'{a}ndez-Santamar\'ia, L. de Teresa: \textit{Some remarks on the hierarchic control for coupled parabolic PDEs}. Recent advances in PDEs: Analysis, Numerics and Control, SEMA SIMAI Springer Series, \textbf{17}, Springer, Cham, 117-137 (2018).

\bibitem{teresa2018} V.  Hern\'andez-Santamar\'ia, L. de Teresa: \textit{Robust Stackelberg controllability for linear and semilinear heat equations.} Evolution Equation and Control Theory. \textbf{7}(2), 247-273 (2018).

\bibitem{liliana2020} V.  Hern\'andez-Santamar\'ia, L. Peralta: \textit{Some remarks on the Robust Stackelberg controllability for the heat equation with controls on the boundary.} Discrete and Continuous Dynamical Systems Series B. \textbf{25}(1), 161-190 (2020).

\bibitem{kere2017coupled} M. K\'er\'e,  M. Mercan, G. Mophou: \textit{Control of Stackelberg for a coupled parabolic equations}. J. Dyn. Control Syst. \textbf{23}, {709--733} (2017).

\bibitem{Lions1994Stackelberg1} J. L. Lions: \textit{Hierarchic control.} Proc. Indian Acad. Sci. Math. Sci. \textbf{104}, 295-304 (1994). 	

\bibitem{Lions1994Stackelberg2} J. L. Lions: \textit{Some remarks on Stackelberg's optimization}. Mathematical Model and Methods in Applied Sciences \textbf{4}(4),  477-487 (1994).

\bibitem{Lions1961} J. L. Lions: Équations différentielles opérationnelles et problèmes aux limites. \textit{Springer-Verlag}. \textbf{111} (1961).

\bibitem{djomegne2018} G. Mophou, M. K\'er\'e, L. L. Djomegne Njoukou\'e. \textit{Robust hierarchic control for a population dynamics model with missing birth rate}. Math Controls Signals Syst. \textbf{32}, 209-239 (2020).

\bibitem{mercan1} M. Mercan:  \textit{Optimal control for distributed linear systems subjected to null-controllability.} Applicable Analysis. \textbf{92}(9), 1928-1943 (2013).

\bibitem{mercan2} M. Mercan:  \textit{Optimal Control for Distributed Linear Systems Subjected to Null Controllability with Constraints on the State.} \textbf{37} of the series Springer Proceedings in Mathematics \& Statistics, 213-232 (2013).

\bibitem{danynina2021}  D. Nina-Huaman,  J. Limaco:  Stackelberg-Nash Controllability for N-Dimensional Nonlinear Parabolic Partial Differential Equations. \textit{Applied Mathematics $\&$ Optimazation}. \textbf{84}(2), 1401-1452 (2021).

\bibitem{dany2021}  D. Nina-Huaman: Stackelberg-Nash Controllability for Quasi-linear Parabolic. Equation in Dimension 1D, 2D, or 3D. \textit{Journal of Dynamical and Controls Systems}. \textbf{28}(2), 291-317 (2022).

\bibitem{teresa2000insensitizing} L. de Teresa: \textit{
	Insensitizing controls for a semilinear heat equation: Semilinear heat equation}. Communications in Partial Differential Equations. \textbf{25}(1-2), 39-72 (2000).

\bibitem{Von1934Stackelberg} H. von Stackelberg.  \textit{Markform undGleichgewicht}. Springer, Berlin, Germany. \textbf{4}, (1934).

	
\end{thebibliography}
\end{document}